\def\abs #1{\lvert #1\rvert}
\newcommand{\beq}{\begin{equation}}
\newcommand{\eeq}{\end{equation}}
\newcommand{\comment}[1]{\emph{\color{red}Comment:\color{black} #1}}
\newlength{\commentslength}
\newcommand{\comments}[1]{
\hspace{-2\parindent}
\addtolength{\commentslength}{-\commentslength}
\addtolength{\commentslength}{\linewidth}
\addtolength{\commentslength}{-\parindent}
\fcolorbox{red}{white}{\smallskip\begin{minipage}[c]{\commentslength}
\emph{Comments:}\begin{itemize}#1\end{itemize}\end{minipage}}\bigskip
}
\renewcommand{\comment}[1]{}\renewcommand{\comments}[1]{}
\begin{document}

\theoremstyle{plain}
\newtheorem{conjecture}{Conjecture}[section]
\newtheorem{theorem}[conjecture]{Theorem}
\newtheorem*{Theorem}{Theorem}
\newtheorem{corollary}[conjecture]{Corollary}
\newtheorem*{Corollary}{Corollary}
\newtheorem{lemma}[conjecture]{Lemma}
\newtheorem{claim}[conjecture]{Claim}
\newtheorem{proposition}[conjecture]{Proposition}
\newtheoremstyle{note}{}{}{\slshape}{}{\bfseries}{.}{ }{}
\theoremstyle{remark}
\newtheorem{remark}[conjecture]{Remark}
\theoremstyle{definition}
\newtheorem{definition}[conjecture]{Definition}
\newtheorem*{openquestion}{Open Question}
\newtheorem*{notation}{Notation}

\newcommand{\thmref}[1]{\hyperref[#1]{{Theorem~\ref*{#1}}}}
\newcommand{\lemref}[1]{\hyperref[#1]{{Lemma~\ref*{#1}}}}
\newcommand{\corref}[1]{\hyperref[#1]{{Corollary~\ref*{#1}}}}
\newcommand{\propref}[1]{\hyperref[#1]{{Proposition~\ref*{#1}}}}
\newcommand{\eqnref}[1]{\hyperref[#1]{{(\ref*{#1})}}}
\newcommand{\defref}[1]{\hyperref[#1]{{Definition~\ref*{#1}}}}
\newcommand{\secref}[1]{\hyperref[#1]{{Section~\ref*{#1}}}}
\newcommand{\figref}[1]{\hyperref[#1]{{Figure~\ref*{#1}}}}
\newcommand{\claimref}[1]{\hyperref[#1]{{Claim~\ref*{#1}}}}

\newenvironment{acknowledgements}{\bigskip\noindent{\bf Acknowledgements. }}{}

\allowdisplaybreaks[1]

\sloppy


\author{Ben W. Reichardt}
\address{Institute for Quantum Information\\
California Institute of Technology\\
MC 107-81~-IQI\\
Pasadena, CA 91125-8100}
\email{breic@caltech.edu}

\title{Proof of the Double Bubble Conjecture in $\bf{R^n}$}

\subjclass{53A10}

\begin{abstract}
The least-area hypersurface enclosing and separating two given volumes in $\bf{R^n}$ is the standard double bubble.
\end{abstract}

\maketitle

\section{Introduction}

\subsection{The Double Bubble Conjecture}

We extend the proof of the double bubble theorem~\cite{HutchingsMorganRitoreRos00dblbblR3} from $\bf{R^3}$ to $\bf{R^n}$.  

\begin{theorem}[Double Bubble Conjecture] \label{t:conjecture}
The least-area hypersurface enclosing and separating two given volumes in $\bf{R^{n}}$ is the standard double soap bubble of \figref{f:standarddoublebubble}, consisting of three ($n-1$)-dimensional spherical caps intersecting at $120$ degree angles.  (For the case of equal volumes, the middle cap is a flat disk.)  
\end{theorem}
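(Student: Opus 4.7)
The strategy is to follow the architecture of the $\mathbf{R}^3$ proof of Hutchings, Morgan, Ritor\'e and Ros, upgrading each geometric/variational ingredient to arbitrary dimension. First I would invoke standard geometric measure theory to obtain existence and interior regularity of an area-minimizing double bubble $B\subset\mathbf{R}^n$ enclosing prescribed volumes $v_1,v_2>0$: $B$ consists of smooth constant-mean-curvature hypersurface pieces, meeting along an $(n-2)$-dimensional singular set in triples at $120^\circ$ angles, with a $(n-3)$-rectifiable set of higher-order singularities of negligible measure. A reflection/symmetrization argument, in the spirit of Hutchings's axial symmetry lemma, should extend essentially verbatim and let me assume $B$ is a hypersurface of revolution about some axis $\ell$, reducing the problem to analyzing a planar profile.

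The second step is a component bound. I would generalize Hutchings's structure theorem by showing that the minimum area $A(v_1,v_2)$ is a concave function of the volumes, and then using the scaling relation in $\mathbf{R}^n$ to derive an inequality that controls $A(v_1,v_2)$ in terms of the volumes of individual components. Together with axial symmetry this forces each region of a non-standard minimizer to be a union of a bounded number of connected components arranged symmetrically around $\ell$, and in fact forces one of the two regions to be connected. For most pairs $(v_1,v_2)$, both bounds collapse to ``connected plus connected,'' which forces the standard double bubble directly.

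The remaining volume pairs are ruled out by an instability argument, and this is the main obstacle. I would construct an explicit volume-preserving deformation of any non-standard symmetric candidate $B$ by rotating the pieces on one side of a hyperplane containing $\ell$ about an axis perpendicular to $\ell$, paired against an equal and opposite rotation of the other side chosen so that the enclosed volumes are preserved to first order. The goal is then to show that the second variation of area along this deformation is strictly negative, contradicting minimality. In $\mathbf{R}^3$ this reduces to a planar Jacobi-field / force-balance computation along the profile curves; in $\mathbf{R}^n$ the same vector field generates an $(n{-}1)$-dimensional deformation of the hypersurface of revolution, so I would need to verify that the second variation splits favorably between a profile-curve contribution (which matches the three-dimensional computation, up to explicit dimensional weights coming from the rotational Jacobian) and a transverse contribution that is manifestly non-positive because the rotation is an isometry in the transverse directions. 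Care is needed at the $(n-2)$-dimensional singular triple-junction set, where the deformation must be extended consistently and the boundary terms in the second variation tracked; once these are shown to cancel or to have the correct sign, the instability argument closes and the non-standard configurations are eliminated, completing the proof.
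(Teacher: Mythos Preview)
Your proposal has a genuine gap at the component-bound step, and this is precisely the obstacle the paper is written to overcome. The Hutchings concavity argument does give component bounds, but they degrade as $n$ grows: in general $\mathbf{R}^n$ one only knows that the larger region has at most three components and the smaller region finitely many. Your assertion that the bounds ``force one of the two regions to be connected'' (and collapse to $1+1$ for most volume pairs) is exactly what is \emph{not} available for arbitrary $n$ and arbitrary volume ratio. The $\mathbf{R}^3$ and $\mathbf{R}^4$ proofs could afford to enumerate $1+k$ (and $2+k$) cases; in $\mathbf{R}^n$ you cannot, so a strategy that reduces to ``eliminate the few remaining $(j,k)$ cases'' does not close.

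The paper's route is different in kind. It dispenses with component bounds entirely and instead runs an induction up the tree of annular bands guaranteed by the Hutchings structure theorem. Using refined properties of Delaunay generating curves (the force, the sign of $\ddot\theta$, comparison with tangent circles) together with the separating-set instability theorem, it classifies every possible leaf and component stack in a putative nonstandard minimizer as ``near-graph, right-side up.'' A short argument at the root then shows the root's child cannot be of this form, giving the contradiction for arbitrary $j+k$.

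Your sketch of the instability step is also too coarse, even for the cases it would cover. The HMRR rotation field is not shown to have strictly negative second variation by a direct profile-plus-transverse computation. Rather, one locates a point $x$ on the axis so that $\ell^{-1}(x)$ separates the profile into enough pieces; combining the restrictions of the rotation field to these pieces yields a nontrivial volume-preserving variation vanishing on one piece, and then stability plus eigenfunction regularity forces the touched components to lie on spheres centered at $x$, which is incompatible with the structure. The substantive geometric work in higher dimensions is proving that such separating sets always exist, and that is where the Delaunay lemmas and the near-graph classification enter---none of which appears in your outline.
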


\begin{figure}
\includegraphics[scale=.175]{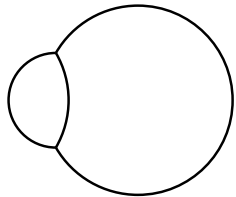}
\caption{The standard double bubble, consisting of three spherical caps meeting at $120$ degree angles, is now known to be the least-area hypersurface that encloses two given volumes in $\bf{R^{n}}$.} \label{f:standarddoublebubble}
\end{figure}

In 1990, Foisy, Alfaro, Brock, Hodges and Zimba \cite{FoisyAlfaroBrockHodgesZimba93dblbblR2} proved the Double Bubble Conjecture in $\bf{R^{2}}$.  
In 1995, Hass, Hutchings and Schlafly \cite{HassHutchingsSchlafly95dblbbl, HassSchlafly00dblbbl} used a computer to prove the conjecture for the case of equal volumes in $\bf{R^{3}}$.  

Arguments since have relied on the Hutchings structure theorem (\thmref{t:structure}), stating roughly that the only possible nonstandard minimal double bubbles are rotationally symmetric about an axis and consist of ``trees" of annular bands wrapped around each other~\cite{Hutchings97structure}.  Figures~\ref{f:structure} and~\ref{f:graphstructure} show examples of ``$4+4$" bubbles, in which the region for each volume is divided into four connected components.  

\begin{figure}
\includegraphics[scale=.28]{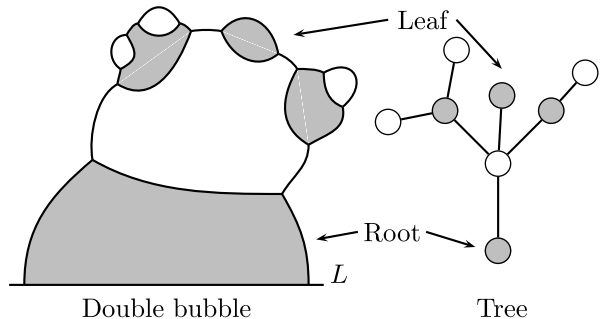}
\caption{A nonstandard minimal double bubble must be a hypersurface of revolution about an axis $L$, consisting of a central bubble with layers of toroidal bands.  Here we show the generating curves of a typical $4+4$ double bubble, together with the associated tree $T$.} \label{f:structure}
\end{figure}

\begin{figure} 
\includegraphics[scale=.28]{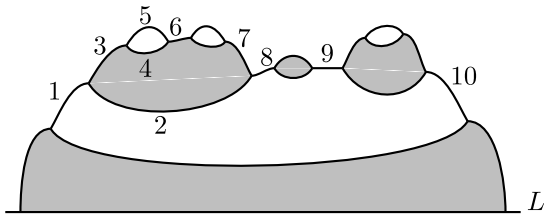}
\caption{The generating curves for another possible $4+4$ bubble with the same tree structure as in \figref{f:structure}.} \label{f:graphstructure}
\end{figure}

In 2000, Hutchings, Morgan, Ritor\'{e} and Ros \cite{HutchingsMorganRitoreRos00dblbblR3} used stability arguments to prove the conjecture for all cases in $\bf{R^{3}}$.  For $\bf{R^3}$, component bounds after Hutchings \cite{Hutchings97structure} guarantee that the region enclosing the larger volume is connected and the region enclosing the smaller volume has at most two components.  (For equal volumes, both regions need be connected.)  Eliminating ``$1 + 2$" and ``$1+1$" nonstandard bubbles proved the conjecture in $\bf{R^3}$.  \secref{s:instability} below sketches their instability argument.  (\cite{Morgan00geometry} also gives background and a proof sketch.)  

In 2003, Reichardt, Heilmann, Lai and Spielman \cite{ReichardtHeilmannLaiSpielman03dblbblR4} extended their arguments to $\bf{R^4}$, and to higher dimensions $\bf{R^n}$ provided one volume is more than twice the other.  In these cases, component bounds guarantee that one region is connected and the other has a finite number $k$ of components.  Eliminating $1 + k$ bubbles proved the conjecture in these cases.

The same component bounds show that in $\bf{R^5}$ it suffices to eliminate $2+2$ bubbles (as well as $1+k$ bubbles) to prove the Double Bubble Conjecture.  In $\bf{R^6}$ it suffices to eliminate also $2+3$ bubbles.  However, in $\bf{R^n}$ generally we know only that the larger region has at most three components and the smaller region has a finite number of components~\cite{HeilmannLaiReichardtSpielman99small}.  

Here, we extend the methods of Hutchings \emph{et al.} and Reichardt {\em et al.} to prove the Double Bubble Conjecture in $\bf{R^n}$ for $n \geq 3$ for arbitrary volumes.  We prove that $j + k$ nonstandard bubbles are not minimizing for arbitrary finite component counts $j, k$.  The arguments are similar in spirit to those of \cite{ReichardtHeilmannLaiSpielman03dblbblR4}, except we take advantage of more properties of constant-mean-curvature surfaces of revolution in order to eliminate previously problematic cases.  Our arguments also simplify the previous proofs in $\bf{R^3}$ and $\bf{R^4}$ because they eliminate the need for component bounds.

\subsection{The Instability Argument}  \label{s:instability}

An area-minimizing double bubble $\Sigma$ exists and has an axis of rotational symmetry $L$.  Assume that $\Sigma$ is a nonstandard double bubble.  
Consider small rotations about a line $M$ orthogonal to $L$, chosen so that the points of tangency between $\Sigma$ and the rotation vectorfield $v$ separate the bubble into at least four pieces, as in \figref{f:separatingex}.  Then we can linearly combine the restrictions of $v$ to each piece to obtain a vectorfield that vanishes on one piece and preserves volume.  By regularity for eigenfunctions, $v$ is tangent to certain related parts of $\Sigma$, implying that they are spheres centered on $L \cap M$.  In turn, this implies that there are too many spherical pieces of $\Sigma$, leading to a contradiction.  
This is the instability argument of \cite{HutchingsMorganRitoreRos00dblbblR3} behind \thmref{t:separation}.  

\begin{figure}
\includegraphics[scale=.46]{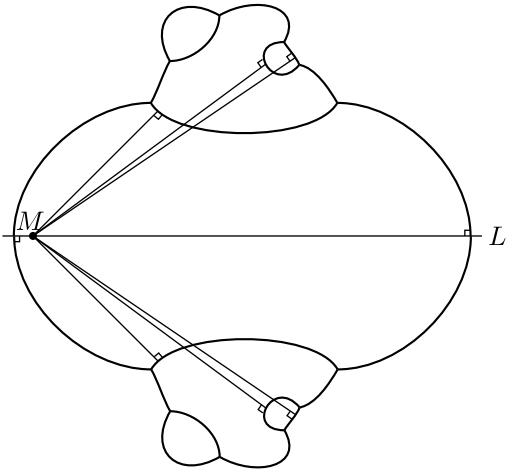}
\caption{The lines orthogonal to $\Sigma$ through the points of the separating set all pass through $M$.  $\Sigma$ cannot be a minimizer.} \label{f:separatingex}
\end{figure}

Therefore, no such useful perturbation axis $M$ can exist.  
By induction, starting at the connected components corresponding to leaves in the tree of \figref{f:structure} we classify all possible configurations in which no such $M$ can be found.  The induction ultimately shows that $\Sigma$ must be a ``near-graph component stack."  A global argument then finds a suitable $M$.  Therefore, $\Sigma$ cannot in fact be a minimizer.    

Having eliminated all nonstandard double bubbles from consideration, the only possible minimizer left is the standard double bubble.  

\section{Delaunay hypersurfaces}

Constant-mean-curvature hypersurfaces of revolution are known as Delaunay hypersurfaces \cite{HutchingsMorganRitoreRos00dblbblR3, Hsiang82constantmeancurvature, HsiangYu81delaunayRn, Delaunay1841, Eels87delaunay}.  Let $\Sigma \subset \bf{R^n}$ be a constant-mean-curvature hypersurface invariant under the action of the group $O(n)$ of isometries fixing the axis $L$.  $\Sigma$ is generated by a curve $\Gamma$ in a plane containing $L$.  Put coordinates on the plane so $L$ is the $x$ axis.  Parameterize $\Gamma = \{x(t),y(t)\}$ by arc-length $t$ and let $\theta(t)$ be the angle from the positive $x$-direction up to the tangent to $\Gamma$.  Then $\Gamma$ is determined by the differential equations
\begin{equation} \label{e:diffeq}
\begin{split}
\dot{x} &= \cos \theta\\
\dot{y} &= \sin \theta\\
\dot{\theta} &= -(n-1) H + (n-2) \frac{\cos\theta}{y}
\end{split}
\end{equation}
Here, $\tan\theta = \dot{y}/\dot{x}$ is the slope of $\Gamma$, $\kappa = -\dot{\theta}$ is the planar curvature of $\Gamma$ with respect to the normal $N = (\sin\theta, -\cos\theta)$, and $H$ is the (constant) mean curvature of $\Sigma$ with respect to $N$ (an average of the planar curvature $\kappa$ and the curvature due to rotating about $L$).  

The {\em force} of $\Gamma$ with respect to $N$ is a constant given by 
\begin{equation} \label{e:force}
F = y^{n-2} (\cos\theta - H y) \enspace .
\end{equation}
Note that from Eqs.~\eqnref{e:diffeq} and~\eqnref{e:force}, 
\begin{equation} \label{e:ddottheta}
\ddot{\theta} = -\frac{(n-1)(n-2)}{y^n} F \sin \theta \enspace .
\end{equation}

\begin{figure}
\includegraphics[scale=.65]{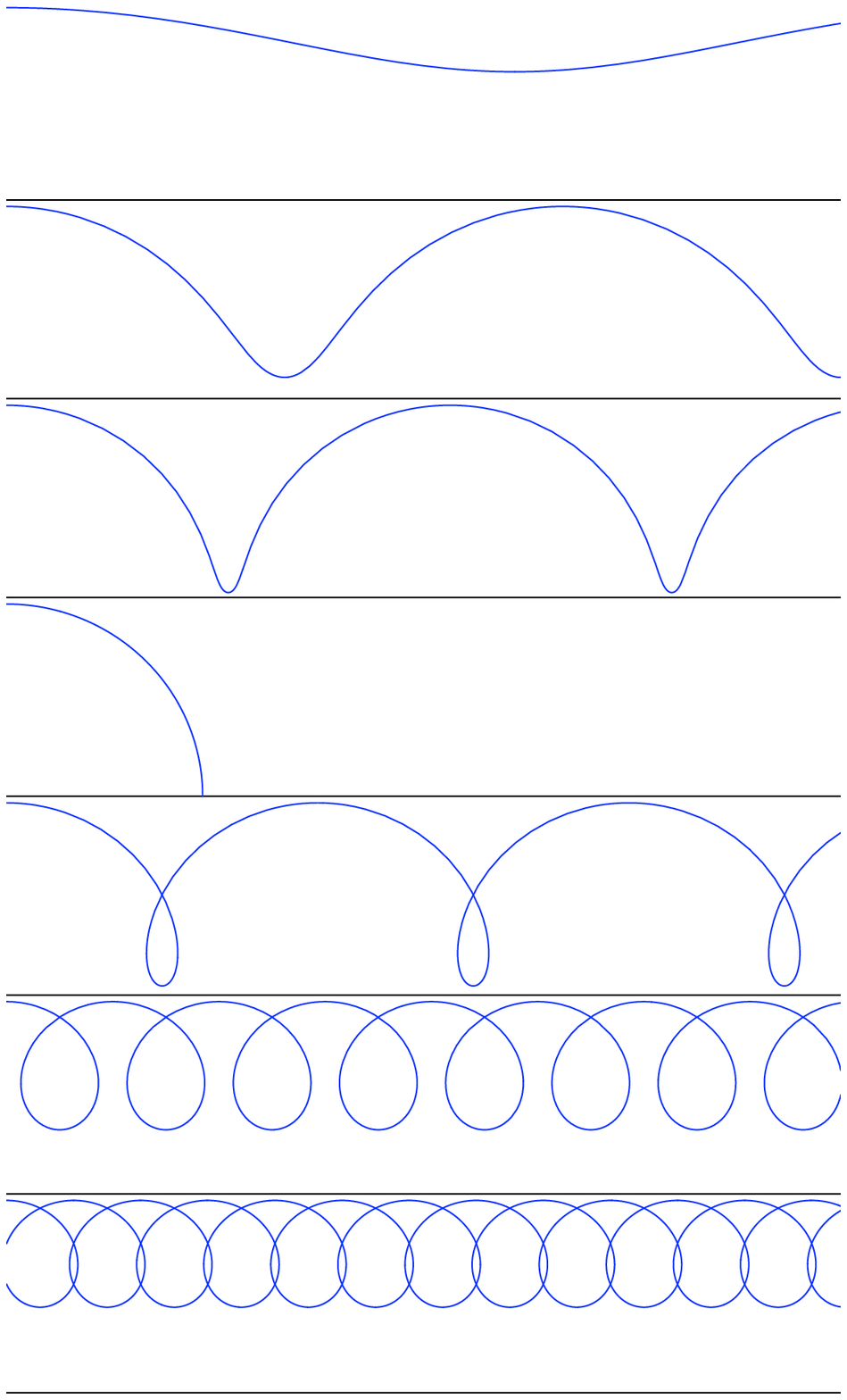}\\
\includegraphics[scale=.65]{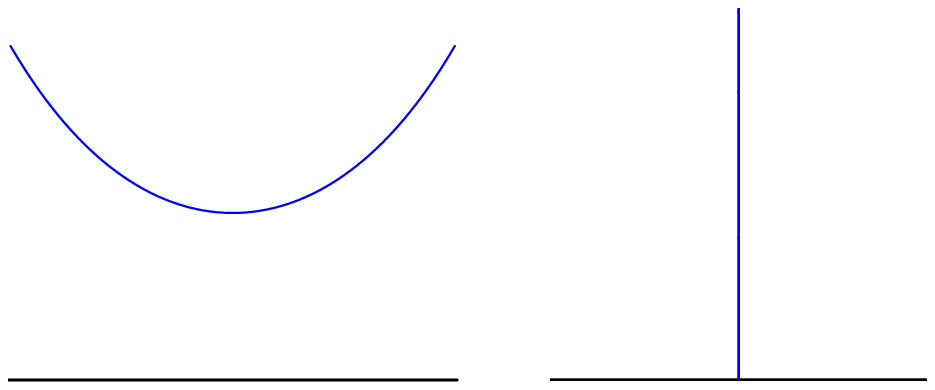}
\caption{Smooth regions of the cluster are parts of ``Delaunay" hypersurfaces of revolution.  Plotted are the generating curves of Delaunay hypersurfaces in $\bf{R^3}$ all starting at $(x,y)=(0,1)$ and $\theta=0$, with increasing mean curvature from top to bottom (positive downward at $x=0$): three unduloids, a sphere, then three nodoids.
Below are examples of a catenoid and a vertical hyperplane, the Delaunay hypersurfaces of zero mean curvature.
} \label{f:delaunay}
\end{figure}

\begin{theorem}[{\cite[Prop.~4.3]{HutchingsMorganRitoreRos00dblbblR3}}] \label{t:delaunay}
Let $\Gamma$ be a complete upper half-planar generating curve that, when rotated about $L$, generates a hypersurface $\Sigma$ with constant mean curvature.  The pair $(H,F)$ determines $\Gamma$ up to horizontal translation.
\begin{enumerate}
\item If $H=0$ and $F\neq 0$, then $\Gamma$ is a curve of catenary type and $\Sigma$ is a hypersurface of catenoid type.  
\item If $H F < 0$, then $\Gamma$ is a locally convex curve and $\Sigma$ is a nodoid.  
\item If $H F > 0$, then $\Gamma$ is a periodic graph over $L$ and $\Sigma$ is an unduloid or a cylinder.  
\item If $H = F = 0$, then $\Gamma$ is a ray orthogonal to $L$ and $\Sigma$ is a vertical hyperplane.
\item If $F=0$ and $H \neq 0$, then $\Gamma$ is a semi-circle and $\Sigma$ is a sphere.  
\end{enumerate}
\end{theorem}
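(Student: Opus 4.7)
The plan is to exploit the conservation law \eqnref{e:force}, which along any solution curve of \eqnref{e:diffeq} reads $\cos\theta = F/y^{n-2} + Hy$. This single relation, combined with the arclength condition $\dot x^2+\dot y^2=1$, reduces the full system to an autonomous flow determined entirely by the pair $(H,F)$, confined to the allowed region $\{y>0 : \abs{F/y^{n-2}+Hy}\leq 1\}$.

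For uniqueness, I would substitute the conservation law into the third line of \eqnref{e:diffeq} to eliminate $\cos\theta$, obtaining the convenient reformulation
\[
\dot\theta \;=\; -H + (n-2)\frac{F}{y^{n-1}} \enspace.
\]
Thus $(y,\theta)$ satisfies an autonomous ODE, so a choice of allowed reference height $y_0$ fixes the trajectory up to the sign of $\sin\theta_0$ (which just reverses the arclength parametrization and leaves the underlying set $\Gamma$ unchanged). The remaining equation $\dot x=\cos\theta$ then recovers $x(t)$ up to the additive constant $x(0)$, which is precisely the horizontal translation.

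For the classification, two pieces of data control everything: the height function $f(y):=F/y^{n-2}+Hy$, which constrains $\Gamma$ to $\{y:\abs{f(y)}\leq 1\}$; and the reformulated $\dot\theta$ above, which controls the planar curvature $\kappa=-\dot\theta$. The cases with $F=0$ are immediate: if also $H=0$ then $\cos\theta\equiv 0$ and $\Gamma$ is a vertical ray (case 4); if $H\neq 0$ then $\cos\theta=Hy$ and $\dot\theta\equiv -H$, which integrates to the round semicircle of radius $1/\abs{H}$ centered on $L$ (case 5). For case (1), $H=0$ with $F\neq 0$, the equation $\cos\theta=F/y^{n-2}$ is the higher-dimensional catenary relation and $\dot\theta=(n-2)F/y^{n-1}$ never vanishes, so $\Gamma$ is strictly convex of catenary type. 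For cases (2) and (3) I would split on the sign of $HF$: when $HF<0$ the two terms of $\dot\theta$ share a sign, so $\kappa$ is of one sign and $\Gamma$ is locally convex, giving a nodoid; when $HF>0$ both $\dot\theta$ and $f'$ vanish at the same critical height $y_\ast=((n-2)F/H)^{1/(n-1)}$, where $f$ has an interior extremum, so $\Gamma$ is either the cylinder $y\equiv y_\ast$ (when $\abs{f(y_\ast)}=1$) or, when $\abs{f(y_\ast)}<1$, oscillates as a periodic graph between the two roots of $\abs{f}=1$ because $\cos\theta$ stays bounded away from $-1$ and hence $\theta$ remains near $0$; this is the unduloid.

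The main obstacle I expect is the global/qualitative step in cases (2) and (3): converting the pointwise sign information on $\dot\theta$ and $f$ into the genuinely global conclusion ``periodic graph over $L$'' (unduloid) versus complete self-intersecting curve (nodoid). This requires verifying that the arclength-parametrized ODE extends smoothly through the turning heights with $\abs{f(y)}=1$ (where $\dot y=0$), using the second-derivative formula \eqnref{e:ddottheta} to see that $\theta$ keeps advancing through these points, and then matching the symmetric half-arcs to paste into a periodic or self-intersecting complete curve. The degenerate cylinder subcase $\abs{f(y_\ast)}=1$ must also be separated cleanly from the generic unduloid.
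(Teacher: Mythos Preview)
The paper does not give its own proof of \thmref{t:delaunay}; it simply quotes the statement from \cite[Prop.~4.3]{HutchingsMorganRitoreRos00dblbblR3} and illustrates it with \figref{f:delaunay}. So there is no in-paper argument to compare against.

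That said, your outline is the standard one and is essentially correct. The key reduction
\[
\dot\theta \;=\; -H + (n-2)\,\frac{F}{y^{\,n-1}},\qquad \cos\theta \;=\; f(y):=\frac{F}{y^{\,n-2}}+Hy,
\]
obtained by combining \eqnref{e:diffeq} with \eqnref{e:force}, is exactly what the cited source uses, and your case split on the signs of $H$ and $F$ tracks the classification as stated. Your uniqueness argument (autonomous $(y,\theta)$-flow, $x$ recovered up to an additive constant) is fine. The self-identified obstacle is the right one: in case~(3) you must argue carefully that $\cos\theta=f(y)$ stays bounded away from $-1$ on the admissible band (indeed $f\ge f(y_\ast)>0$ when $H,F>0$), so that $\theta$ never reaches $\pm\pi/2$ and $\Gamma$ is a genuine graph; periodicity then follows from the reflection symmetry of the ODE at the turning heights $f(y)=1$. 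In case~(2) the sign-definiteness of $\dot\theta$ gives local convexity immediately, but the word ``nodoid'' also carries the global content that $\theta$ runs through all of $\mathbf R$ and the curve self-intersects; this follows once you check via \eqnref{e:ddottheta} and the monotonicity of $f$ that the flow does not stall at the turning height. These are routine but should be written out if you want a self-contained proof.
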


See \figref{f:delaunay}.  The Delaunay hypersurfaces with nonzero mean curvature are the sphere, unduloid and nodoid.  If $\Sigma$ has positive mean curvature upward then it must be a nodoid.  If $\Gamma$ is not graph, then $\Sigma$ must be either a nodoid or a hyperplane.  

\begin{lemma}[Force balancing~{\cite[Lemma~4.5]{HutchingsMorganRitoreRos00dblbblR3}}] \label{t:forcebalancing}
Assume that three generating curves $\Gamma_i$, $i=0,1,2$, of Delaunay hypersurfaces meet at a point $p$.  Consider normals turning clockwise about $p$.  If the curvatures with respect to these normals satisfy $H_0 + H_1 + H_2 = 0$, then the forces with respect to these normals satisfy
\begin{equation}
F_0 + F_1 + F_2 = 0 \enspace .
\end{equation}
\end{lemma}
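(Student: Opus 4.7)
The plan is a direct substitution into the explicit force formula~\eqnref{e:force}. At the common meeting point $p$, all three generating curves $\Gamma_i$ share the same distance $y_p$ from the axis $L$. The force formula then gives
\[
F_0 + F_1 + F_2 \;=\; y_p^{\,n-2}\Bigl(\sum_{i=0}^{2}\cos\theta_i \;-\; y_p\sum_{i=0}^{2}H_i\Bigr).
\]
The hypothesis $H_0+H_1+H_2=0$ annihilates the second sum, so the lemma reduces to the purely geometric claim that $\cos\theta_0+\cos\theta_1+\cos\theta_2=0$.

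For this last identity I would appeal to the $120^\circ$ regularity condition at a triple junction of a minimizing bubble cluster (a consequence of Plateau's laws for area-minimizing clusters, applied to the three smooth sheets of $\Sigma$ meeting at $p$). With each $\Gamma_i$ oriented so that its unit tangent $T_i=(\cos\theta_i,\sin\theta_i)$ points away from $p$ and the normal $N_i=(\sin\theta_i,-\cos\theta_i)$ is $T_i$ rotated $90^\circ$ clockwise, the three tangent vectors $T_0,T_1,T_2$ lie at pairwise angles of exactly $120^\circ$ in the generating plane. Three unit vectors in the plane spaced at $120^\circ$ sum to the zero vector, so $T_0+T_1+T_2=0$; taking the first coordinate gives $\sum_i\cos\theta_i=0$ (the second coordinate yields the analogous $\sum_i\sin\theta_i=0$, which is not needed). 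Plugging back into the displayed identity gives $F_0+F_1+F_2=0$.

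The only delicate point is the orientation bookkeeping, and I expect it to be the one place where care is required. The phrase \emph{normals turning clockwise about $p$} in the hypothesis is exactly what pins down a consistent cyclic choice of pairs $(T_i,N_i)$, making the pressure-balance form $\sum_i H_i=0$ genuinely meaningful (each $H_i$ must be measured against the corresponding $N_i$, not the opposite normal). Once this cyclic choice is fixed, the $120^\circ$ spacing of the $T_i$ is forced and the computation collapses in a single line. There is no analytic obstacle beyond this sign-tracking: the statement is essentially a one-line consequence of the closed-form expression for $F$ together with the Plateau angle condition at the triple junction.
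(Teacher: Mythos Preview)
Your proposal is correct and takes essentially the same approach as the paper, whose proof is the single line ``The lemma follows from Eq.~\eqnref{e:force}.'' You have simply unpacked that line: substituting into \eqnref{e:force} at the common height $y_p$ and using $\sum H_i=0$ reduces the claim to $\sum_i\cos\theta_i=0$, which you correctly derive from the $120^\circ$ meeting condition (implicit in the paper's setting via \thmref{t:structure}).
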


\noindent The lemma follows from Eq.~\eqnref{e:force}.  

\section{Structure of minimal double bubbles} \label{s:minimal}

A double bubble is a piecewise-smooth oriented hypersurface $\Sigma \subset \bf{R^{n}}$ consisting of three compact pieces $\Sigma_{1}$, $\Sigma_{2}$ and $\Sigma_{0}$ (smooth on their interiors), with a common boundary such that $\Sigma_{1} \cup \Sigma_{0}$, $\Sigma_{2} \cup \Sigma_{0}$ enclose two regions $R_1$, $R_2$, respectively, of given volumes.  The work of Almgren~\cite{Almgren76regularity} (see~\cite[Ch.~13]{Morgan00geometry}) and Hutchings establishes the existence and structure.  

\begin{theorem}[{\cite[Theorem~5.1]{Hutchings97structure}}] \label{t:structure} \label{t:finite}
Any nonstandard minimal double bubble is a hypersurface of revolution about some line $L$, composed of pieces of constant-mean-curvature hypersurfaces meeting in threes at $120$ degree angles.  The bubble is a topological sphere with a finite tree $T$ of annular bands attached, as in \figref{f:structure}.  The two caps of the bottom component are pieces of spheres, and the root of the tree has just one branch.  
\end{theorem}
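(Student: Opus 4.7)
The plan follows Hutchings' original proof of the structure theorem, and divides naturally into three stages: existence and axial symmetry, tree structure and finiteness, and analysis at the rotation axis.

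\textbf{Existence, local regularity, and axial symmetry.} Almgren's theory of area-minimizing partitions yields a minimizer $\Sigma$ whose three pieces $\Sigma_0,\Sigma_1,\Sigma_2$ are smooth with constant mean curvature away from a singular set of codimension at least two, and along that singular set three smooth sheets meet tangentially to a planar $Y$, hence at $120$-degree angles. The main geometric step is to promote $\Sigma$ to a rotationally symmetric one. I would invoke a Hsiang-type symmetrization argument adapted to the two-volume setting: if $\Sigma$ were not equivariant under $O(n-1)$ about some axis $L$, then one could combine two independent rotation vectorfields with coefficients chosen so that both enclosed volumes are preserved and the first variation of area is strictly negative, contradicting minimality. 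This two-constraint modification of Hsiang's classical argument is the main obstacle; one needs to check that, unless $\Sigma$ is already axially symmetric, the space of available rotation deformations is large enough to satisfy both volume constraints and still reduce area. Once the symmetry is in hand, $\Sigma$ is generated by revolving the boundary of a planar region $U$ in the closed upper half-plane about $L$, and each arc of $\partial U \setminus L$ is a Delaunay curve of the type classified in \thmref{t:delaunay}.

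\textbf{Tree structure and finiteness.} Label each connected component of $U$ by $R_1$, $R_2$, or exterior; two neighbors then share a single Delaunay arc. The resulting adjacency graph is connected, and any cycle would produce a closed loop in $U$ crossing generating arcs an odd number of times, contradicting the consistency of the labeling, so the adjacency graph is a tree $T$. Components of $U$ touching $L$ revolve to topological balls, while components disjoint from $L$ revolve to toroidal annular bands. A translation argument along $L$ (a nontrivial continuous family of minimizers would contradict uniqueness of the center of mass on a minimizer) shows only one component touches $L$, namely the central bubble. Finally, Almgren monotonicity gives a uniform lower bound on the area of any component meeting a fixed ball, which rules out accumulation and forces $T$ to be finite.

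\textbf{Spherical caps and single-branch root.} At each contact of a Delaunay arc with $L$, smoothness of the revolved hypersurface requires the arc to meet $L$ perpendicularly, so $\cos\theta = 0$ at the contact; then \eqnref{e:force} forces $F = 0$ along the entire arc, and \thmref{t:delaunay}(5) identifies the arc as a semicircle and the cap as a piece of a sphere. The central bubble has exactly two contact points with $L$, giving its two spherical caps. For the statement that the root of $T$ has just one branch, I would apply \lemref{t:forcebalancing} at the triple junction on the boundary of the central bubble where the root band attaches: combined with the fact that the two adjacent pieces of the central bubble are spherical caps of known mean curvature, the $120$-degree angle condition together with the force-balancing identity leaves only one admissible configuration for the Delaunay arcs meeting there, precluding an immediate secondary branching at the root.
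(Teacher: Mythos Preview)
The paper does not prove this theorem; it is quoted verbatim from Hutchings~\cite{Hutchings97structure} and used as a black box. So there is no ``paper's own proof'' to compare against, and your proposal is really a sketch of Hutchings' original argument. That said, two steps in your sketch are genuinely wrong, not merely terse.

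First, your symmetry argument cannot work as stated. Rotation vectorfields generate isometries of $\mathbf{R}^n$; the first variation of area under any such field is identically zero, regardless of whether $\Sigma$ is already $O(n-1)$-equivariant, and the enclosed volumes are preserved automatically. So ``combining two independent rotation vectorfields to make the first variation of area strictly negative'' is impossible in principle. Hutchings' actual route to axial symmetry is entirely different: one uses the concavity of the least-area function $A(v_1,v_2)$ together with a hyperplane-bisection argument. For a given direction there is a hyperplane splitting $R_1$ in half; comparing the two halves and invoking concavity forces reflective symmetry across that hyperplane, and running this over enough directions yields full $O(n-1)$ symmetry. This reflection/concavity mechanism is the essential idea you are missing.

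Second, your ``translation along $L$'' argument for why only one component meets the axis is not valid. The generating region $U$ is connected through its arcs, so you cannot slide one axial component relative to another to produce a nontrivial family of minimizers; the bands tie everything together. Hutchings obtains the single-axial-component statement (and the single-branch root) from the planar tree analysis itself, using how Delaunay arcs with $F=0$ behave and how the $120^\circ$ condition constrains the incidence at the two vertices on the central bubble, not from any rigidity-of-translations principle. Your force argument for the spherical caps is correct, but the subsequent appeal to \lemref{t:forcebalancing} to rule out extra branches at the root is only a gesture; you would need to say precisely which configuration of forces is obstructed.
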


Hence, any minimal double bubble is determined by an upper half-planar diagram of generating curves that, when rotated about $L$, generate the double bubble.  By studying these generating curves, we will eliminate as unstable nonstandard double bubbles.  

If a double bubble is stable, then for $i = 1, 2$, the mean curvature $H_i$ of $\Sigma_i$ with respect to normals pointing into the region $R_i$ is constant over the various smooth pieces of $\Sigma_i$ and is called the {\em pressure} of $R_i$, positive by \cite[Corollary~3.3]{Hutchings97structure}.  The pressure difference $H_0 = H_1 - H_2$ is the constant mean curvature of $\Sigma_0$ with respect to normals pointing from $R_2$ into $R_1$~\cite[Lemma~3.1]{HutchingsMorganRitoreRos00dblbblR3}.  

Therefore \lemref{t:forcebalancing} applies to each vertex, implying.  

\begin{claim} \label{t:sameouterboundaries}
The outer boundaries of a connected component $C$ -- between $C$ and the outside, ${\bf{R^n}} \setminus (R_1 \cup R_2)$ -- are parts of the same Delaunay hypersurface up to horizontal translation.
\end{claim}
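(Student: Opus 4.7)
The plan is to reduce the claim, via \thmref{t:delaunay}, to showing that all outer arcs of $\partial C$ share the same force $F$; their common mean curvature is already the pressure $H_i$ of the enclosing region $R_i$. The approach will combine the local structure of triple-junction vertices with \lemref{t:forcebalancing}, iterated around the boundary cycles of $\partial C$ in the generating plane.

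First I would establish a key alternation. At each triple-junction vertex on $\partial C$, the three $120^\circ$ sheets locally separate the three distinct regions $R_1$, $R_2$, and the exterior, and so consist of exactly one arc of each of $\Sigma_0, \Sigma_1, \Sigma_2$. For $C \subset R_i$, the two arcs of $\partial C$ meeting at the vertex are the outer $\Sigma_i$-arc and the inner $\Sigma_0$-arc; the third arc lies in $\Sigma_{3-i}$ and is an outer arc of the neighboring $R_{3-i}$ component. Hence traversing any cycle of $\partial C$ in the generating plane, outer arcs and inner arcs strictly alternate: $\alpha_1, \beta_1, \alpha_2, \beta_2, \ldots$, with $\alpha_j \subset \Sigma_i$ and $\beta_j \subset \Sigma_0$.

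Next, for two consecutive outer arcs $\alpha_j, \alpha_{j+1}$ separated by an inner arc $\beta_j$, both third-sheet arcs at the endpoints of $\beta_j$ are outer arcs of the same $R_{3-i}$ component $C'_j$ (namely, the one on the other side of $\beta_j$). Applying \lemref{t:forcebalancing} at both endpoints with consistent clockwise normals and subtracting, the contribution from $F_{\beta_j}$ cancels and I obtain a relation of the form
\[ F_{\alpha_j} - F_{\alpha_{j+1}} = F_{\gamma^R_j} - F_{\gamma^L_j}, \]
with $\gamma^L_j, \gamma^R_j$ the two outer arcs of $C'_j$ adjacent to $\beta_j$.

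Finally, I would close the argument by induction on the tree $T$ of \thmref{t:structure}. For a leaf component, alternation and the fact that it has only one parent interface force $\partial C$ in the generating plane to consist of just one outer arc and one inner arc, so the claim is vacuous there. For an internal component, the inductive hypothesis applied to each neighboring component $C'_j$ yields $F_{\gamma^L_j} = F_{\gamma^R_j}$, whence $F_{\alpha_j} = F_{\alpha_{j+1}}$ for every $j$. The hard part will be orchestrating this induction cleanly, since the parent--child roles in $T$ are not a priori distinguished and a single $C'_j$ may share several interfaces with $C$; the cleanest route is to orient $T$ outward from its single-branch root at the central spherical bubble, using that $T$ is finite and acyclic so that the coupled force-balance system admits only the desired constant-outer-force solution on each component.
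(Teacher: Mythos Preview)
Your proposal is correct and follows essentially the same approach as the paper: reduce via \thmref{t:delaunay} to showing equal force, then induct over the tree $T$ using \lemref{t:forcebalancing} at the two endpoints of each inner arc so that the child's outer-force equality propagates to the parent's adjacent outer arcs. The paper phrases the induction as ``on the height of the subtree of descendants of $C$'' and illustrates the vertex-pair cancellation on a concrete example, but the content is identical to your displayed relation $F_{\alpha_j}-F_{\alpha_{j+1}} = F_{\gamma_j^R}-F_{\gamma_j^L}$; your worry that a single neighbor $C'_j$ might share several interfaces with $C$ is moot, since in the tree structure of \thmref{t:structure} adjacent components share exactly one interface arc.
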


\begin{proof}
The boundary pieces have the same pressure $H$ and -- by applying \lemref{t:forcebalancing} repeatedly -- the same force $F$, so are parts of the same Delaunay hypersurface up to horizontal translation by \thmref{t:delaunay}.  

More carefully, the proof is by induction on the height of the subtree of descendants of $C$ in the associated tree $T$ (from \thmref{t:structure}).  \figref{f:graphstructure} provides examples of the cases that arise.  

\begin{itemize}
\item The base case is when $C$ has no children, so has just a single piece of outer boundary $\Gamma$ (e.g., $\Gamma_5$ in \figref{f:graphstructure}).  Then the claim is trivial.
\item To step the height inductive assumption, assume, e.g., that $\Gamma_3$ and $\Gamma_7$ have the same force.  Force-balancing \lemref{t:forcebalancing} on vertices $v_{123} \equiv \bar{\Gamma}_1 \cap \bar{\Gamma}_2 \cap \bar{\Gamma}_3$ and $v_{278}$ implies that $\Gamma_2$ and $\Gamma_8$ have the same force with respect to normals pointing into $C$.  The same arguments show that $\Gamma_9$ and $\Gamma_{10}$ -- so all outer boundaries of $C$ -- have the same force. \qedhere
\end{itemize}
\end{proof}

Although we will not need it, it is also interesting to remark:
\begin{lemma}[{\cite[Lemma~6.4]{HutchingsMorganRitoreRos00dblbblR3}}] \label{t:smallpressure}
In a minimizing double bubble for unequal volumes, the smaller region has larger pressure.  
\end{lemma}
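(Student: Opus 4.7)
The plan is to identify each pressure $H_i$ as a Lagrange multiplier for the volume constraint on $R_i$, so that $(n-1)H_i = \partial\mathcal{A}/\partial V_i$, where $\mathcal{A}(V_1, V_2)$ denotes the minimum area of a double bubble enclosing given volumes $V_1$ and $V_2$. Combined with the exchange symmetry $\mathcal{A}(V_1, V_2) = \mathcal{A}(V_2, V_1)$ and the scaling homogeneity $\mathcal{A}(\lambda^n V_1, \lambda^n V_2) = \lambda^{n-1}\mathcal{A}(V_1, V_2)$, this reduces the lemma to a one-variable concavity statement.

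First, I would record the Lagrange multiplier identity $\partial_i \mathcal{A} = (n-1) H_i$. This is a standard first-variation calculation: a normal variation of the smooth part of $\partial R_i$ with speed $v$ changes $\mathrm{vol}(R_i)$ at rate $\int v\,dA$ and area at rate $(n-1) H_i \int v\,dA$, so $H_i$ is (up to the factor $n-1$) precisely the multiplier dual to the $V_i$ constraint. Euler's identity applied to the homogeneous function $\mathcal{A}$ also yields $\mathcal{A} = n(V_1 H_1 + V_2 H_2)$.

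Second, set $C = V_1 + V_2$ and $\phi(t) := \mathcal{A}(Ct, C(1-t))$ on $t \in (0,1)$. Exchange symmetry gives $\phi(t) = \phi(1-t)$, and Step~1 gives
\begin{equation*}
\phi'(t) = C(n-1) \bigl(H_1 - H_2\bigr),
\end{equation*}
with pressures evaluated at volumes $(Ct, C(1-t))$. Symmetry then forces $\phi'(1/2) = 0$, recovering the equal-volume case, and the lemma becomes the assertion that $\phi'(t) > 0$ on $(0, 1/2)$.

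Third, I would establish strict concavity of $\phi$ on $(0, 1)$; together with symmetry about $1/2$ and $\phi'(1/2) = 0$, this forces $\phi'(t) > 0$ on $(0, 1/2)$ as required. The natural route is an explicit competitor construction: given minimizers with volumes $(Ct_1, C(1-t_1))$ and $(Ct_2, C(1-t_2))$, build a valid competitor for the interpolated volumes $(Ct, C(1-t))$ with $t = \alpha t_1 + (1-\alpha)t_2$ whose area is at most $\alpha \mathcal{A}(Ct_1, C(1-t_1)) + (1-\alpha)\mathcal{A}(Ct_2, C(1-t_2))$, with strict inequality unless one minimizer is a dilate of the other. Strictness comes from the strict sub-linearity of $\lambda \mapsto \lambda^{(n-1)/n}$ and from the fact that distinct minimizers are genuinely distinct shapes.

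The main obstacle is making the competitor construction rigorous: one cannot literally ``average'' two distinct nonstandard double bubbles, so the construction must either parameterize minimizers smoothly in $(V_1, V_2)$ via the implicit function theorem and analyze the second variation, or explicitly scale and re-assemble the pieces of the structure theorem. Homogeneity of degree $(n-1)/n < 1$ by itself is not enough (e.g., $\max(V_1, V_2)^{(n-1)/n}$ is homogeneous but not concave), so the particular geometry of area-minimizing double bubbles --- the smooth dependence on volume parameters and the relation $H_0 = H_1 - H_2$ at the separating wall --- must enter essentially.
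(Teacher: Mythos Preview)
Your reduction to concavity of the minimum-area function is exactly the paper's approach; the paper's proof is the one-line remark that the lemma ``follows from concavity of the minimum-area function~\cite[Theorem~3.2]{Hutchings97structure}.'' The paper does not reprove concavity --- it cites it as a known result of Hutchings --- whereas you attempt to sketch a proof and correctly flag the competitor construction as the hard part you have not carried out.

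There is, however, a mistake in your Step~2: you assert that symmetry forces $\phi'(1/2) = 0$ and describe this as ``recovering the equal-volume case.'' In fact the paper explicitly warns, in the remark immediately following the lemma, that ``in a minimizer for equal volumes, the regions may still have unequal pressures; $H_0$ need not be zero.'' The function $\phi$ need not be differentiable at $1/2$; symmetry and concavity give only $\phi'_-(1/2) \geq 0 \geq \phi'_+(1/2)$. This does not destroy your conclusion for $t \neq 1/2$, but it does mean your identification $\partial_i \mathcal{A} = (n-1) H_i$ should be phrased as a subgradient inclusion (the pressures of any particular minimizer furnish a subgradient of the concave function $\mathcal{A}$) rather than as an equality of derivatives, since $\mathcal{A}$ is not known to be differentiable everywhere.
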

\noindent \lemref{t:smallpressure} follows from concavity of the minimum-area function~\cite[Theorem~3.2]{Hutchings97structure}.  Note however that in a minimizer for equal volumes, the regions may still have unequal pressures; $H_0$ need not be zero.  

\section{Instability by separation}

Let $\Sigma \subset \bf{R^{n}}$ be a regular stationary double bubble of revolution about axis $L$, with upper half planar generating curves $\cup \Bar{\Gamma}_i$ consisting of arcs $\Bar{\Gamma}_{i}$, with interiors $\Gamma_{i}$, ending either at the axis or in threes at vertices $v_{ijk} = \Bar{\Gamma}_i \cap \Bar{\Gamma}_j \cap \Bar{\Gamma}_k$.  

\def\f {l} 

\subsection{\texorpdfstring{The map $\f: \cup \Gamma_{i} \rightarrow L \cup \{\infty\}$}{The map $\f$}}	

\begin{definition}
Define a map $\f: \cup \Gamma_{i} \rightarrow L \cup \{\infty\} \equiv [-\infty, +\infty]/(-\infty \sim +\infty)$ that maps $p$ to the intersection with $L$ of the line orthogonal to $\Gamma$ at $p$.  Let $L(p)$ be the ray $\overrightarrow{p \f(p)}$.  Let $\f_\Gamma$ be the restriction of $\f$ to the arc $\Gamma$.  For $p$ an endpoint of $\Gamma$, define $\f_\Gamma(p) \in [-\infty,+\infty]$ to be the limiting value of $\f_\Gamma$ approaching $p$, $\lim_{\substack{q \rightarrow p\\ q \in \Gamma}} \f(q)$.  (For $\Gamma$ a vertical hyperplane, $\f_\Gamma(p) = \infty$.)
\end{definition}

For short, we will write $\f_i$ for $\f_{\Gamma_i}$, and use similar abbreviated notation for other relevant variables $L$, $\theta$, $\kappa$, $N$, $F$.

For $p$ an endpoint of $\Gamma_i$, note that if $\f_i(p) \in \f(\Gamma_{j})$ and $\Gamma_{j}$ is not a circle or hyperplane ($F_j \neq 0$), then for all $q \in \Gamma_{i}$ sufficiently close to $p$, $\f(q) \in \f(\Gamma_{j})$.  (This follows, e.g., because $\dot{\f}_j$ is proportional to the force $F_j$; see \cite[Remark~5.1]{HutchingsMorganRitoreRos00dblbblR3}.)

\subsection{\texorpdfstring{Separating set $\f^{-1}(x)$ instability claims}{Separating set $\f\textasciicircum\{-1\}(x)$ instability claims}}

\begin{theorem}[{\cite[Prop.~5.2]{HutchingsMorganRitoreRos00dblbblR3}}] \label{t:separation}
Consider a stable double bubble of revolution $\Sigma \subset \bf{R^{n}}$, $n \geq 3$, with axis $L$.  Assume that there is a minimal set of points $\{p_{1}, \ldots, p_{k}\}$ in $\cup \Gamma_{i}$ with $\f(p_{1}) = \cdots = \f(p_{k}) = x$ that separates $\cup \Bar{\Gamma}_i$.  

Then every connected component of $\Sigma$ that contains one of the points $p_{i}$ is part of a sphere centered at $x$ (if $x \in L$) or part of a hyperplane orthogonal to $L$ (in the case $x = \infty$).  
\end{theorem}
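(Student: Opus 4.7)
The plan is to execute the instability argument sketched in \secref{s:instability}. Take $M$ to be the line through $x \in L$ perpendicular to the half-plane containing the generating curves (when $x=\infty$, let $M$ be ``at infinity'' and use the translation Killing field along $L$ in place of rotation). Let $v$ be the Killing vector field on $\bf{R^n}$ generated by rotations about $M$, and set $u := v \cdot N$ on $\Sigma$. Because $v$ comes from a one-parameter family of ambient isometries, $u$ is a Jacobi field for the stability operator of $\Sigma$, with the correct matching conditions at the $Y$-junctions (implied by \lemref{t:forcebalancing}). The key geometric observation is that $u(p) = 0$ if and only if the line normal to $\Sigma$ at $p$ meets $L$ at $x$, i.e.\ $\f(p) = x$; so $u$ vanishes at each $p_i$ and, by $O(n-1)$-symmetry about $L$, on the full $(n-2)$-sphere $S_i \subset \Sigma$ obtained by rotating $p_i$ about $L$.

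Next, because $\{p_1, \ldots, p_k\}$ is a \emph{minimal} separating set of $\cup \bar{\Gamma}_i$, the spheres $S_i$ separate $\Sigma$ into pieces $\Sigma^{(1)}, \ldots, \Sigma^{(m)}$ with $m \geq 4$; this count comes from minimality applied in the tree from \thmref{t:structure}. Pick one piece $\Sigma^{(j_0)}$. I would then solve for coefficients $c_j$, not all zero, with $c_{j_0} = 0$, such that the piecewise normal variation $\tilde u := \sum_j c_j \mathbf{1}_{\Sigma^{(j)}} u$ preserves both enclosed volumes $\abs{R_1}$ and $\abs{R_2}$. This is a homogeneous system of two linear equations in the $m - 1 \geq 3$ remaining $c_j$, so a nontrivial $\tilde u$ exists. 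The function $\tilde u$ is continuous across each $S_i$ (since $u$ already vanishes there) but may jump in normal derivative.

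The heart of the argument is to feed $\tilde u$ into the stability second-variation form $Q$. Stability gives $Q(\tilde u, \tilde u) \geq 0$. On the other hand, because $v$ is a Killing field, integration by parts using the Jacobi equation for $u$ on each smooth piece reduces $Q(\tilde u, \tilde u)$ to interface terms supported on the separating spheres $S_i$ and on the $Y$-junctions, and using $u|_{S_i} = 0$ together with the $120^\circ$ angle matching and force-balancing these terms all drop out, giving $Q(\tilde u, \tilde u) = 0$. So $\tilde u$ lies in the kernel of $Q$ on volume-preserving variations. By ``regularity for eigenfunctions,'' this forces $\tilde u$ to satisfy the Jacobi equation weakly across every $S_i$; since $\tilde u \equiv 0$ on the open set $\Sigma^{(j_0)}$, strong unique continuation for the (Schr\"odinger-type) Jacobi operator on the Delaunay piece adjacent to $\Sigma^{(j_0)}$ propagates $\tilde u \equiv 0$ along that piece, and iterating across successive $S_i$ gives $\tilde u \equiv 0$ on every piece in the same connected component of $\Sigma$. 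Varying $j_0$ shows $u \equiv 0$ on every connected component of $\Sigma$ that meets any $S_i$. On such a component, $u \equiv 0$ means $v$ is everywhere tangent, so the component is invariant under rotation about $M$; combined with its $O(n-1)$-invariance about $L$, this forces it to lie in a union of spheres centered at $L \cap M = x$ (or hyperplanes orthogonal to $L$, when $x = \infty$), which is the desired conclusion.

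The main obstacle is the middle step: verifying with care that the piecewise interface contributions to $Q(\tilde u, \tilde u)$ all cancel --- which requires the detailed double-bubble stability formula with its $Y$-junction boundary terms and free-boundary terms at the axis $L$ --- and then upgrading $Q(\tilde u, \tilde u) = 0$ to the weak-Jacobi-across-$S_i$ property that feeds unique continuation. The combinatorial check that $m \geq 4$ from minimality of the separating set is secondary, but also needs to be written out against the tree structure. Everything else is setup (construction of $v$, $u$, $\tilde u$) or standard consequences of $u$ being a restricted ambient Killing field.
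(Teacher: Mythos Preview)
Your outline follows the Hutchings--Morgan--Ritor\'e--Ros argument the paper cites, but there is a genuine gap in the counting step. You assert that the spheres $S_i$ separate $\Sigma$ into $m \geq 4$ pieces ``from minimality applied in the tree.'' In fact a \emph{minimal} separating set of $\cup\bar\Gamma_i$ always yields exactly two pieces: if removing $\{p_1,\ldots,p_k\}$ produced three or more components, then any single $p_i$ lies between only two of them, and deleting that $p_i$ from the set would still disconnect the graph, contradicting minimality. So $m=2$, and your linear-algebra step (two volume constraints in $m-1$ unknowns) collapses.

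The missing observation is hidden in your earlier misstep: $u = v\cdot N$ is \emph{not} $O(n-1)$-invariant about $L$. Writing a point of $\Sigma$ as $(a,\,b\,\omega)$ with $\omega\in S^{n-2}$, one computes $u = -\omega_1\cdot u_0$, where $u_0$ depends only on the generating curve and vanishes exactly when $\f = x$. Two consequences follow. First, $u$ does vanish on each $S_i$ --- but because $u_0(p_i)=0$, not by symmetry. Second, and crucially, the extra nodal locus $\{\omega_1=0\}$ cuts each of the two generating-curve pieces in half, producing the ``at least four pieces'' of the paper's sketch; equivalently, since $\int_{S^{n-2}}\omega_1\,d\omega = 0$, each restriction $u_j$ is \emph{automatically} volume-preserving, so you never needed $m\geq 4$ at all --- with $m=2$ you may simply take $\tilde u = u_1$ (zero on piece $2$) and proceed directly to $Q(\tilde u,\tilde u)=0$ and regularity. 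A smaller slip: for $x=\infty$ the correct limiting Killing field is translation \emph{perpendicular} to $L$ in the generating plane (the rescaled limit of rotation about $M$ as $x_0\to\infty$), not translation along $L$; translation along $L$ gives $u=\sin\theta$, which vanishes where the curve is horizontal, not where $\f=\infty$.
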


We sketched the proof of \thmref{t:separation} in our introduction \secref{s:instability}.  We will apply several useful corollaries of this theorem, taken directly from \cite{ReichardtHeilmannLaiSpielman03dblbblR4}.  

\begin{corollary}[{\cite[Cor.~4.2]{ReichardtHeilmannLaiSpielman03dblbblR4}}] \label{t:downvertical}
No generating curve that turns downward past the vertical can have an internal separating set, i.e., two points $p_1 \neq p_2$ in the arc, with $\f(p_1) = \f(p_2)$.    
\end{corollary}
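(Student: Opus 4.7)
The plan is to argue by contradiction using \thmref{t:separation} as a black box. Suppose $\Gamma$ is an arc whose tangent angle $\theta$ strictly crosses $\pi/2$ at some interior point (so that $\Gamma$ turns downward past the vertical), and suppose for contradiction that there are two distinct points $p_1,p_2 \in \Gamma$ with $\f(p_1) = \f(p_2) = x$. The goal is to force $\Gamma$ to be a sub-arc of a sphere centered at $x$ or of a vertical hyperplane, and then contradict the turning hypothesis.

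First I would verify the combinatorial claim that $\{p_1,p_2\}$ separates $\cup\bar\Gamma_i$. Since $p_1$ and $p_2$ lie in the interior of the single arc $\Gamma$, removing them cuts $\Gamma$ into three sub-arcs and isolates the middle sub-arc strictly between them (that sub-arc has no endpoints elsewhere in the graph). Any minimal separating subset $S \subseteq \{p_1,p_2\}$ then satisfies $\f \equiv x$ on $S$, so \thmref{t:separation} applies: every connected smooth component of $\Sigma$ containing a point of $S$ is part of a sphere centered at $x$ (when $x \in L$) or part of a hyperplane orthogonal to $L$ (when $x = \infty$). In particular the smooth component generated by $\Gamma$, which contains both $p_1$ and $p_2$, must be of this form.

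Next I would rule out both possibilities using the Delaunay classification in \thmref{t:delaunay}. The generating curve of a sphere centered at $x \in L$ is an upper semicircle whose tangent is vertical only at the two endpoints on $L$; at every interior point with $y > 0$ the tangent angle lies strictly in $(\pi/2, 3\pi/2)$ and never returns to $\pi/2$. The generating curve of a hyperplane orthogonal to $L$ is a vertical ray with $\theta \equiv \pi/2$ constant, so $\theta$ does not strictly change through $\pi/2$. Neither case is compatible with the hypothesis that $\Gamma$ turns downward past the vertical at some interior point, giving the contradiction.

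The argument is really a direct combination of \thmref{t:separation} and \thmref{t:delaunay}; there is no deep obstacle. The closest thing to a subtle point is the combinatorial claim in the first step, but removing two interior points from a single arc of any graph always isolates the middle segment, so it requires no further work.
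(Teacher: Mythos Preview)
Your proof is correct and follows essentially the same route as the paper: apply \thmref{t:separation} to the internal separating pair, conclude the arc lies on a sphere centered on $L$ or a vertical hyperplane, and observe that neither such curve turns past the vertical. You spell out the combinatorial separation and the minimality check more carefully than the paper does, but the argument is the same.
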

\begin{proof}
Otherwise, by \thmref{t:separation}, the arc would have to be part of either a circle with center on the axis $L$ or a line perpendicular to $L$.  But neither turns past the vertical.  
\end{proof}

\begin{corollary}[{\cite[Cor.~4.3]{ReichardtHeilmannLaiSpielman03dblbblR4}}] \label{t:twicevertical}
No generating curve that is not part of a vertical line can go vertical twice, including at least once in its interior.  
\end{corollary}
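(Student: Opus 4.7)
My approach mirrors the proof of \corref{t:downvertical}: I would invoke \thmref{t:separation} at the exceptional value $x = \infty$. The key geometric fact is that whenever $\Gamma$ has vertical tangent at a point $p$ (i.e., $\theta(p) = \pm\pi/2$), the line orthogonal to $\Gamma$ at $p$ is horizontal; since $p$ lies in the open upper half-plane, this horizontal line is parallel to $L$ and disjoint from it, giving $l(p) = \infty$. The same conclusion persists as a limit at an endpoint where the tangent approaches vertical.

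Suppose, for contradiction, that $\Gamma$ is not part of a vertical line but goes vertical at two distinct points $p_1, p_2 \in \bar{\Gamma}$, with $p_1 \in \Gamma$ interior. Then $l(p_1) = l(p_2) = \infty$. In the main case where $p_2$ is also interior, $\{p_1,p_2\} \subset \cup \Gamma_i$ is an internal separating set of $\cup \bar{\Gamma}_i$ with common $l$-value $\infty$: removing the two points detaches the middle sub-arc of $\Gamma$ from the rest of the graph. Passing to a minimal separating subset, \thmref{t:separation} with $x = \infty$ forces the connected component of $\Sigma$ containing $p_1$ to be part of a hyperplane orthogonal to $L$; that is, $\Gamma$ lies in a vertical line, contradicting the hypothesis.

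The boundary case in which $p_2$ is an endpoint of $\Gamma$ is handled analogously. If $p_2$ is a free endpoint of $\Gamma$ on $L$, then $\{p_1\}$ alone already separates $\cup \bar{\Gamma}_i$ (the sub-arc of $\Gamma$ between $p_1$ and $p_2$ is detached), and the same contradiction follows. The only mildly delicate sub-case—and the main obstacle in the proof—is when $p_2$ is a triple vertex shared with two other arcs: here one must use the $120^\circ$-angle condition (which prevents the adjoining arcs from being vertical at $p_2$) to bookkeep carefully and extract a minimal separating subset of $\cup \Gamma_i$ with constant $l$-value $\infty$. In every sub-case \thmref{t:separation} terminates the argument by forcing $\Gamma$ to lie in a hyperplane orthogonal to $L$, yielding the contradiction.
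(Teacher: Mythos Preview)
Your route via $x=\infty$ differs from the paper's, and the case you flag as ``mildly delicate'' is in fact where your argument breaks. When $p_2$ is a triple vertex, the only point of $\cup\Gamma_i$ you have exhibited with $\f$-value $\infty$ is $p_1$ itself; the vertex $p_2$ lies outside $\cup\Gamma_i$, and the $120^\circ$ condition you invoke guarantees precisely that the two adjoining arcs are \emph{not} vertical at $p_2$, so they contribute no nearby points with $\f=\infty$ either. A single interior point $\{p_1\}$ does not separate $\cup\bar\Gamma_i$: both endpoints of such a $\Gamma$ are vertices (a nodoid cannot meet the axis since $F\neq 0$ forces $y>0$), and the two sub-arcs of $\Gamma$ on either side of $p_1$ reconnect through the remaining boundary arcs of the two regions bordered by $\Gamma$. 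So there is no separating set with constant $\f$-value $\infty$ to extract, and \thmref{t:separation} cannot be applied at $x=\infty$ here.

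The paper sidesteps this by first observing that $\Gamma$ must be a nodoid (\thmref{t:delaunay}) and then taking $x$ finite with $|x|$ large rather than $x=\infty$. Because $\f_\Gamma$ is strictly monotone between verticals (being proportional to the nonzero force $F$) and tends to $\pm\infty$ as one approaches either $p_1$ or $p_2$ from inside $\Gamma$, a suitable choice of sign produces two preimages in the interior of $\Gamma$, hence an internal separating set. Since $\Gamma$ turns past the vertical at the interior point $p_1$, this contradicts \corref{t:downvertical}. Perturbing from $\infty$ to a large finite $x$ is exactly the idea missing from your endpoint case.
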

\begin{proof}
Such an arc (a nodoid by \thmref{t:delaunay}) has a separating set $\f^{-1}(x)$ for some $x$ with $\abs{x}$ large enough, contradicting \corref{t:downvertical}.  
\end{proof}

\begin{corollary}[{\cite[Cor.~4.4]{ReichardtHeilmannLaiSpielman03dblbblR4}}] \label{t:separatingset}
Consider a nonstandard minimizing double bubble.  Then there is no $x \in L \cup \{\infty\}$ such that $\f^{-1}(x) \smallsetminus (\text{two circular caps})$ contains points in the interiors of distinct $\Gamma_{i}$ that separate $\cup \Bar{\Gamma}_i$.  
\end{corollary}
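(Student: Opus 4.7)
The approach is a direct contradiction using \thmref{t:separation}, in the same spirit as the proofs of \corref{t:downvertical} and \corref{t:twicevertical}. I would suppose for contradiction that such an $x \in L \cup \{\infty\}$ exists, and fix a minimal separating subset $\{p_1, \ldots, p_k\}$ of $\f^{-1}(x) \smallsetminus (\text{two circular caps})$ whose points lie in the interiors of pairwise distinct arcs $\Gamma_{i_1}, \ldots, \Gamma_{i_k}$ and whose removal disconnects $\cup \bar{\Gamma}_i$.

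The key step is then to apply \thmref{t:separation} to this minimal separating set. Since a nonstandard minimizer is a stable double bubble of revolution about $L$, the hypotheses of \thmref{t:separation} are met, and the theorem forces every smooth component of $\Sigma$ containing one of the $p_j$ to be part of a sphere centered at $x$ (for $x \in L$) or a hyperplane orthogonal to $L$ (for $x = \infty$). In particular, each of the arcs $\Gamma_{i_j}$ is itself a circular arc centered at $x$, or a piece of a vertical line, and by construction none of them coincides with either of the two circular caps supplied by the root bubble of \thmref{t:structure}.

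To close the argument, I would combine this conclusion with \thmref{t:structure}, \claimref{t:sameouterboundaries}, and the force-balancing \lemref{t:forcebalancing} to show that such extra spherical or hyperplanar arcs cannot be accommodated by the tree $T$. At each $120$-degree end-vertex of an interior circular arc centered at $x$, the tangent is perpendicular to the line joining that vertex to $x$; this over-determines the directions of the remaining two arcs at the vertex and, via force balance, their Delaunay types. Propagating this rigidity along $T$ and comparing with the fact that all outer boundaries of any one connected component must, by \claimref{t:sameouterboundaries}, be parts of the same Delaunay hypersurface up to horizontal translation, yields the contradiction.

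I expect the main obstacle to be precisely this last structural step: the \thmref{t:separation} application is essentially a one-liner, but ruling out the extra spherical/hyperplanar arc inside $T$ requires tracing around the tree (distinguishing the central bubble from annular bands via \thmref{t:delaunay}) rather than a purely local argument. Once that incompatibility is written out cleanly, the corollary follows immediately.
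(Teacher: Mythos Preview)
Your plan is on the right track and matches the paper's approach for the case $x \in L$: apply \thmref{t:separation} to force the separating arcs to be spherical, then propagate via force balancing. Two points of comparison are worth making. First, the paper splits off the case $x = \infty$ with a much shorter argument than tree-tracing: any separating set must contain a point on some \emph{outer} boundary arc, which \thmref{t:separation} then forces to be a vertical hyperplane; but outer boundaries have strictly positive mean curvature (positive pressure of the regions, \secref{s:minimal}), a contradiction. No force-balance propagation is needed for that case.

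Second, for $x \in L$ your description of the endpoint is slightly off. The contradiction does not come from \claimref{t:sameouterboundaries} or from spherical arcs being ``incompatible with $T$''; rather, the force-balancing propagation (this is \cite[Prop.~5.7]{HutchingsMorganRitoreRos00dblbblR3}, which the paper simply cites) shows that \emph{every} arc of the bubble is a circular arc centered at $x$, and hence $\Sigma$ is the standard double bubble --- contradicting the hypothesis that it is nonstandard. Your observation that the $120$-degree vertex condition plus force balance over-determines the adjacent arcs is exactly the right mechanism, but the target conclusion you should aim for is ``$\Sigma$ is standard,'' not a structural obstruction in the tree.
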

\begin{proof}[Proof sketch]
For $x \in L$, the statement is \cite[Prop.~5.7]{HutchingsMorganRitoreRos00dblbblR3}.  Arguments using ``force balancing'' show that more pieces of the minimizer are spherical and hence the bubble is the standard double bubble.  For $x = \infty$, note that a separating set crosses at least one outer boundary.  By \thmref{t:separation}, this boundary is a vertical line, contradicting positive pressure of the regions.  
\end{proof}

We will consider various nonstandard double bubbles, and show that they violate one of the above corollaries of \thmref{t:separation}, hence cannot be minimizing.  

These corollaries sufficed for the proof of the Double Bubble Conjecture in $\bf{R^4}$, but in higher dimensions $\bf{R^n}$ we will need to use more information about Delaunay hypersurfaces, Lemmas~\ref{t:undularyray} and~\ref{t:ddottheta}.

\section{Delaunay hypersurface lemmas} \label{s:delaunaylemmas}

We will need a few more properties of Delaunay hypersurfaces, which can be proven by comparing $\Sigma$ to certain spheres.

\begin{lemma} \label{t:undularyray}
Consider an unduloid generated by $\Gamma$ rising to a point $p$ where it makes an angle $\theta_\Gamma(p) \in [\pi/2-2\psi,\pi/2)$ above the horizontal.  Then the ray going back down from $p$ at an angle of $\psi \in (0,\pi/4)$ with the unduloid passes completely beneath it to the left of $p$.
\end{lemma}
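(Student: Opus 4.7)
The plan is to compare the unduloid generating curve $\Gamma$ with the arc $S$ of a sphere of the same mean curvature, and separately the ray with $S$, invoking the constant-mean-curvature maximum principle. Write $H>0$ and $F>0$ for the mean curvature and force of $\Gamma$, and set $\theta^* := \theta_\Gamma(p)$. Since $\Gamma$ is a graph over $L$, the claim reduces to $y_\Gamma(x)\ge y_R(x)$ for all $x\le x(p)$ where $\Gamma$ is defined, writing $y_\Gamma$ and $y_R$ for the heights of the unduloid and the ray. A slope comparison at $p$ immediately gives the inequality in a left neighborhood of $x(p)$: the ray is tilted relative to $\Gamma$'s tangent by $\psi$ in the direction that makes $y_R$ drop faster than $y_\Gamma$ as $x$ decreases.

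To extend the inequality globally, I interpose the semicircle $S$ of radius $1/H$ tangent to the ray $R$ at $p$ on the upper side. First, $y_R(x)\le y_S(x)$ for every $x\le x(p)$ in the horizontal footprint of $S$, because a tangent line to a closed disk stays outside the disk. Second, I claim $y_\Gamma(x)\ge y_S(x)$ on the same footprint: the hypothesis $\theta^*\ge\pi/2-2\psi$ ensures that the tangent to $S$ at $p$ makes an angle at most $2\psi\in(0,\pi/2)$ with the tangent to $\Gamma$, so $S$ sits on the correct side of $\Gamma$ near $p$, and the CMC maximum principle, combined with the fact that $\Gamma$ has force $F>0$ while $S$ has force $0$, prevents the two curves from crossing to the left of $p$. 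Beyond the horizontal footprint of $S$, the ray $R$ has already descended below the axis $L$ (using $\psi<\pi/4$ and elementary trigonometry), whereas $\Gamma$ stays above its neck value $y_{\min}>0$; thus $y_\Gamma(x)>y_R(x)$ persists for all $x$ smaller than the leftmost $x$-coordinate of $S$ as well.

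The main obstacle is the CMC comparison showing $\Gamma$ stays above $S$ on the footprint. The ODE for $\theta$ in \eqnref{e:diffeq} is the same for $\Gamma$ and $S$, and \eqnref{e:force} shows that they are distinguished only by the force constant: $S$ has $F=0$ while $\Gamma$ has $F>0$. By \eqnref{e:ddottheta}, this positive force drives $\ddot\theta<0$ on the rising portion of $\Gamma$ while $\ddot\theta=0$ along $S$, so the two curves bend differently precisely in the way needed. Quantifying this to conclude that $\Gamma$ cannot curve back down to meet $S$ once separated, and verifying that the angle bound $\theta^*\ge\pi/2-2\psi$ places $S$ initially on the correct side, is the delicate step and the one most specific to the Delaunay structure explored in \secref{s:delaunaylemmas}.
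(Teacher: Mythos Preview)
Your comparison circle $S$ is where the argument breaks. A circle of radius $1/H$ tangent to the ray $R$ at $p$ ``on the upper side'' has its center at $p$ plus $1/H$ times the upward unit normal to $R$; in the nontrivial range $\theta_\Gamma(p) < \pi/2 - \psi$ this center has $y$-coordinate $y(p) + \tfrac{1}{H}\cos\bigl(\theta_\Gamma(p)+\psi\bigr) > 0$, so it does \emph{not} lie on the axis $L$. Hence the hypersurface of revolution generated by $S$ is not a sphere and does not have constant mean curvature; it does not satisfy the Delaunay system \eqnref{e:diffeq} and carries no force invariant. Your assertions that ``$S$ has force $0$,'' that ``the ODE for $\theta$ is the same for $\Gamma$ and $S$,'' and that $\ddot\theta = 0$ along $S$ via \eqnref{e:ddottheta} are therefore unfounded. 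The CMC maximum principle is inapplicable for a second reason as well: it compares CMC hypersurfaces in \emph{tangential} contact, whereas $\Gamma$ and $S$ meet transversally at $p$ at angle $\psi$.

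The paper repairs both defects by taking the comparison circle $C$ to be \emph{centered on $L$} and \emph{tangent to $\Gamma$} (not to $R$) at $p$. Then $C$ is a genuine Delaunay sphere with $F_C = 0$, though with $H_C \neq H_\Gamma$: from \eqnref{e:force} the shared $(y,\theta)$ at $p$ forces $H_C > H_\Gamma$, whence $\kappa_C(p) > \kappa_\Gamma(p)$ and $\Gamma$ separates above $C$ immediately to the left of $p$. Non-crossing to the left is then established not by a black-box maximum principle but by an explicit $\Delta y$--versus--angle comparison: since $\ddot\theta_\Gamma < 0$ on the rising arc while $\ddot\theta_C \equiv 0$ (from \eqnref{e:ddottheta} with $F_C = 0$), the planar curvature $\kappa_\Gamma$ only decreases moving left while $\kappa_C$ is constant, so $\Gamma$ attains each angle $\phi \ge \theta_\Gamma(p)$ at a $y$-coordinate no lower than $C$ does, precluding a first crossing. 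The ray-below-$C$ step is then the easy half: because $C$ is tangent to $\Gamma$ at $p$, the ray makes angle $\psi$ with $C$'s tangent there and enters the disk, so it is a chord of $C$ lying beneath the upper semicircle.
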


\begin{proof}
For $\theta_\Gamma(p) \geq \pi/2-\psi$, the ray leaves $p$ heading to the right, so cannot cross the unduloid to the left.  Assume therefore that $\theta_\Gamma(p) \in [\pi/2-2\psi,\pi/2-\psi]$.  (In applications, we will set $\psi=\pi/6$.)  See \figref{f:undularyray}.

\begin{figure}
\includegraphics[scale=0.25]{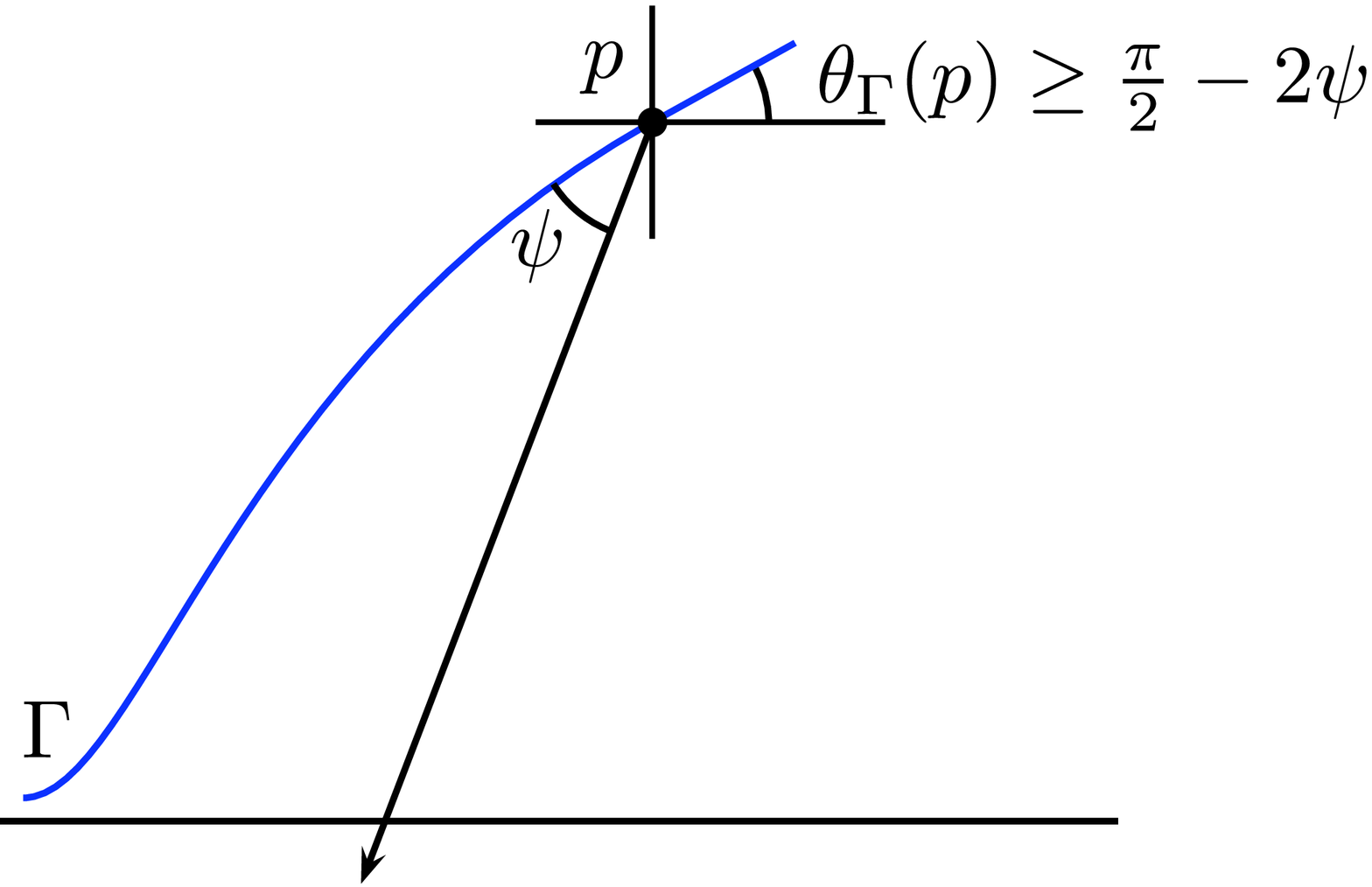}$\qquad\;$
\caption{\lemref{t:undularyray}.} \label{f:undularyray}
\end{figure}

Consider the circle $C$ centered on the $x$ axis and passing through $p$ tangent to the unduloid.  Note that the ray in question passes entirely beneath the semicircle above the $x$ axis; \figref{f:raycircle}.  We will show that the unduloid stays above the semicircle to the left of $p$.  

\begin{figure}
\includegraphics[scale=0.25]{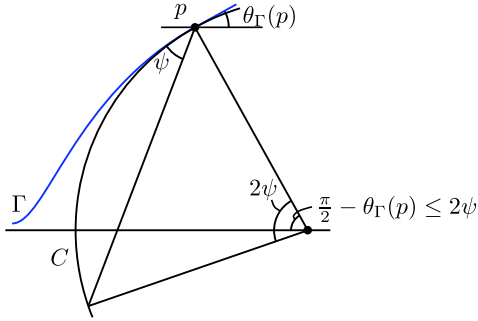}
\caption{The ray from $p$ stays within the upper half of the circle $C$ centered on the $x$ axis and tangent to $\Gamma$ at $p$.  By comparing curvatures, $C$ stays below $\Gamma$ to the left of $p$.} \label{f:raycircle}
\end{figure}

Recall Eq.~\eqnref{e:force} for the force $F = y^{n-2} (\cos\theta - H y)$ of a constant-mean-curvature surface of revolution with respect to a downward-pointing normal.  For an unduloid $F > 0$, while for a sphere $F=0$ (\thmref{t:delaunay}).  Since at $p$, $y$ and $\theta$ are the same, the sphere has higher mean-curvature $H$.  

In particular, $\kappa_\Gamma(p) = -\dot{\theta}_\Gamma(p)$ the planar curvature of $\Gamma$ at $p$ is less than $\kappa_C(p)$ (because the portion of $H$ due to rotating about $L$ is the same for each), so $\Gamma$ leaves $p$ above $C$.  Assume for contradiction that $\Gamma$ crosses $C$ for the first time at a point $q$ to the left of $p$.  Then the unduloid is steeper than the circle at $q$: $\theta_\Gamma(q) > \theta_C(q)$, so $C$ achieves the angle $\theta_\Gamma(q)$ at some point below $q$.  

However, this is impossible; for any angle $\phi$, the $y$-coordinate at which $\Gamma$ first reaches $\phi$, $y(p) - \Delta y_\Gamma(\phi)$, is no more than the $y$-coordinate at which $C$ reaches $\phi$, $y(p) - \Delta y_C(\phi)$.  Indeed, $\ddot{\theta}_\Gamma < 0$ (Eq.~\eqnref{e:ddottheta}), so $\kappa_\Gamma$ is decreasing moving left, while $\kappa_C$ is constant.  $\Gamma$ therefore reaches some maximum angle $\phi_{\text{max}} \geq \theta_\Gamma(p)$.  Beyond $\phi_{\text{max}}$, $\Gamma$ turns upward away from the circle, so the curves can only further diverge.  But for $\phi \in [\theta_\Gamma(p), \phi_\text{max}]$, the difference in $y$ coordinates from $p$ to where the curve reaches $\phi$ is 
\begin{eqnarray*}
\Delta y &=& \int_{\theta_\Gamma(p)}^{\phi} d\psi \frac{dt}{d\psi} \sin \psi \\
&=& \int_{\theta_\Gamma(p)}^{\phi} d\psi \frac{1}{\kappa(\psi)} \sin \psi 
\end{eqnarray*}
Since $\kappa_\Gamma(\psi)$ is decreasing, while $\kappa_C$ is constant, indeed $\Delta y_\Gamma \geq \Delta y_C$.
\end{proof}

\begin{lemma} \label{t:ddottheta}
Consider a smooth curve $\Gamma$ of length $\Delta t$ between points $p$ and $q$.  Let $\theta(t)$ be the angle the curve makes with the horizontal at arc-length $t$ from $p$, with $\theta_p=\theta(0)$ and $\theta_q=\theta(\Delta t)$ the initial and final angles.  Assume $\theta_p \leq \theta_q \leq \theta_p + \pi/3$.  (If we rotate the coordinate system so $\theta_p=0$, then $0 \leq \theta_q \leq \pi/3$.)  Assume $\ddot{\theta} < 0$ and $\theta(t)$ in $[\theta_p, \theta_q]$.  Then the line through $q$ at angle $\pi/6$ clockwise from $\Gamma$ (i.e., at angle $\theta_q - \pi/6$ above horizontal) passes above $\Gamma$.
\end{lemma}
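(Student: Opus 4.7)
The plan is to set up a signed-height function of $\Gamma$ above the target line $\ell$, reduce the claim to controlling its value at $t = 0$, and then exploit $\ddot\theta < 0$ through a weighted-integral inequality. After rotating coordinates so that $\theta_p = 0$, the line $\ell$ through $q$ makes angle $\alpha := \theta_q - \pi/6 \in [-\pi/6, \pi/6]$ with the horizontal. Let $n = (-\sin\alpha, \cos\alpha)$ be its upward unit normal and define
\[
h(t) := n \cdot \bigl((x(t), y(t)) - q\bigr),
\]
so that $h(\Delta t) = 0$ and the conclusion ``$\ell$ passes above $\Gamma$'' becomes $h(t) \leq 0$ on $[0, \Delta t]$. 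A direct computation gives $h'(t) = -\sin\alpha\cos\theta(t) + \cos\alpha\sin\theta(t) = \sin(\theta(t) - \alpha)$.

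Next, I would note that $\ddot\theta < 0$ together with $\theta(t) \in [0, \theta_q]$ forces $\theta$ to be monotonically increasing on $[0, \Delta t]$: if $\dot\theta(\Delta t) < 0$, then concavity would produce an interior maximum of $\theta$ strictly exceeding $\theta_q$, a contradiction. Consequently $h'(t)$ vanishes at most at the unique $t_0$ with $\theta(t_0) = \alpha$; $h$ is nonincreasing on $[0, t_0]$ and nondecreasing on $[t_0, \Delta t]$, so the maximum of $h$ on $[0, \Delta t]$ is attained at one of the endpoints. Since $h(\Delta t) = 0$, everything reduces to showing $h(0) \leq 0$. (When $\alpha < 0$, $h'$ is positive throughout and the conclusion is immediate.)

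For the final step, change variables to $\phi = \theta(t)$ to rewrite
\[
-h(0) = \int_0^{\Delta t}\sin(\theta(t) - \alpha)\,dt = \int_0^{\theta_q}\sin(\phi - \alpha)\,r(\phi)\,d\phi, \qquad r(\phi) := \frac{1}{\dot\theta(t(\phi))}.
\]
Because $\ddot\theta < 0$, the weight $r$ is positive and monotonically increasing in $\phi$. The unweighted baseline evaluates to $\int_0^{\theta_q}\sin(\phi - \alpha)\,d\phi = \cos\alpha - \cos(\theta_q - \alpha) = \cos\alpha - \cos(\pi/6)$, which is nonnegative precisely because $|\alpha| \leq \pi/6$. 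Splitting $r(\phi) = r(\alpha) + (r(\phi) - r(\alpha))$, the constant piece contributes $r(\alpha)$ times this nonnegative baseline, and in the remaining piece the factors $(r(\phi) - r(\alpha))$ and $\sin(\phi - \alpha)$ share a common sign (both positive for $\phi > \alpha$, both negative for $\phi < \alpha$), so their product is pointwise nonnegative. A Chebyshev-type reweighting therefore yields $-h(0) \geq 0$, as required.

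The main obstacle is recognizing that the constant $\pi/6$ is sharp: the baseline integral is nonnegative exactly in the range $|\alpha| \leq \pi/6$, which is in turn exactly the range forced by the hypothesis $\theta_q \leq \theta_p + \pi/3$. The concavity $\ddot\theta < 0$ is then used to produce an increasing weight $r$ that biases the integrand in the favorable direction; without this monotonicity the inequality would fail. An alternative route via comparison with the osculating circle at $q$ in the spirit of \lemref{t:undularyray} seems possible, but tracking on which side $\Gamma$ leaves the circle looks more delicate, so I prefer the direct integral argument.
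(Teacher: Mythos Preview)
Your proof is correct and takes a genuinely different route from the paper's. The paper argues more directly: after rotating so $\theta_p=0$, it uses the concavity bound $\theta(t) > (t/\Delta t)\,\theta_q$ to estimate $\Delta y > \Delta t\,(1-\cos\theta_q)/\theta_q$ and $\Delta x < \Delta t\,(\sin\theta_q)/\theta_q$, obtaining
\[
\frac{\Delta y}{\Delta x} \;>\; \frac{1-\cos\theta_q}{\sin\theta_q} \;=\; \tan\!\Bigl(\tfrac{\theta_q}{2}\Bigr) \;\geq\; \tan\!\bigl(\theta_q-\tfrac{\pi}{6}\bigr),
\]
the last step because $\theta_q \leq \pi/3$. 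Since $\Gamma$ is convex (you proved $\dot\theta>0$; the paper leaves this implicit), it lies below the chord $pq$, which in turn lies below the target line because the chord is steeper and the two meet at $q$. Your approach instead introduces the signed height $h$ and, after reducing to $h(0)\leq 0$, uses a Chebyshev-type reweighting with the increasing weight $r=1/\dot\theta$. The paper's route is shorter and entirely elementary (linear comparison plus a half-angle identity); yours is more systematic, makes explicit that $\pi/6$ is sharp (it is precisely where your unweighted baseline $\cos\alpha-\cos(\pi/6)$ vanishes), and would adapt more readily if one wished to vary the angle parameter.
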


\begin{proof}
Rotate the coordinate system so without loss of generality $\theta_p = 0$ and $\theta_q \leq \pi/3$.  Then $\ddot{\theta}<0$ implies that for all $t \in [0,\Delta t]$,
$$
\theta(t) > \tfrac{t}{\Delta t} \theta_q \enspace .
$$
Let
\begin{eqnarray*}
\Delta y &=& y(\Delta t) - y(0) \\
&=& \int_0^{\Delta t} \sin(\theta(t)) dt \\
&>& \int_0^{\Delta t} \sin(\tfrac{t}{\Delta t}\theta_q) dt\\
&=& \Delta t (1 - \cos\theta_q)/\theta_q \enspace ,
\end{eqnarray*}
where for the inequality we used that $\sin\phi$ is increasing for $\phi \in (-\pi/2,\pi/2)$.
Similarly, let
\begin{eqnarray*}
\Delta x &=& x(\Delta t) - x(0) \\
&=& \int_0^{\Delta t} \cos(\theta(t)) dt \\
&<& \int_0^{\Delta t} \cos(\tfrac{t}{\Delta t}\theta_q) dt \\
&=& \Delta t (\sin\theta_q)/\theta_q \enspace ,
\end{eqnarray*}
using that $\cos\phi$ is decreasing for $\phi \in (0,\pi)$.
Hence,
\begin{eqnarray*}
\frac{\Delta y}{\Delta x} &>& \frac{1 - \cos(\theta_q)}{\sin(\theta_q)} \\
&\geq& \tan(\theta_q - \pi/6) \enspace .
\end{eqnarray*}
$\Gamma$ is a convex-up curve between $p$ and $q$, so it lies beneath the direct line segment $pq$, which has higher slope than, and hence lies beneath, the line through $q$ at angle $\theta_q - \pi/6$ above horizontal.
\end{proof}

By \thmref{t:delaunay} and Eq.~\eqnref{e:ddottheta}, on a clockwise-turning nodoid, $\ddot{\theta} < 0$ provided $\dot{y} = \sin\theta < 0$; while for an unduloid with $t$ increasing to the right, $\ddot{\theta} < 0$ provided $\dot{y} > 0$.

\begin{corollary} \label{t:unduloidcorollary}
Consider an unduloid or nodoid generating curve strictly increasing left to right from $p$ to $q$ with the notation of \figref{f:boundary}.  Assume $\theta_{3}(p) \in [0, \pi/2)$ and
$$
\max\{\pi/6, \theta_{3}(p)\} < \theta_{3}(q) \leq \min\{\pi/2, \theta_{3}(p) + \pi/3\}\enspace .$$
Then $\f_{4}(q) < \f_{2}(p)$.
\end{corollary}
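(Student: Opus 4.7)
The plan is to prove the corollary by applying \lemref{t:ddottheta} to the curve $\Gamma_3$ and combining this with the $120$ degree angle condition at the two endpoints.

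First, I would translate the claim into an inequality in coordinates. At vertex $p$, the three generating curves meet at $120$ degree angles, so $\Gamma_2$'s tangent must differ from $\Gamma_3$'s outgoing tangent by $\pm 2\pi/3$; similarly at $q$. Reading the orientation from \figref{f:boundary}, the perpendicular to $\Gamma_2$ at $p$ should come out at angle $\theta_3(p) + \pi/6$ above horizontal, and the perpendicular to $\Gamma_4$ at $q$ at angle $\theta_3(q) - \pi/6$; the sign asymmetry reflects that $\Gamma_3$ leaves $p$ but arrives at $q$ from the opposite orientation. Parametrizing each perpendicular line and intersecting with $L$ then yields
\[
\f_2(p) = x_p - y_p\cot(\theta_3(p) + \pi/6), \qquad \f_4(q) = x_q - y_q\cot(\theta_3(q) - \pi/6).
\]

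Next I apply \lemref{t:ddottheta} to $\Gamma_3$, taking $\theta_p := \theta_3(p)$ and $\theta_q := \theta_3(q)$. The hypotheses $0 \leq \theta_3(p) < \theta_3(q) \leq \theta_3(p) + \pi/3$ match the lemma's assumptions once $\ddot{\theta}_3 < 0$ is verified: this is immediate for a rising unduloid ($\dot y > 0$) by the remark following that lemma, and for a rising nodoid one has to check $F_3 > 0$ on the relevant piece. The lemma then yields that the line through $q$ at angle $\theta_3(q) - \pi/6$ above horizontal passes above $\Gamma_3$, hence above $p$. Using $\theta_3(q) > \pi/6$ so that $\cot(\theta_3(q) - \pi/6) > 0$, this rearranges to $x_q - x_p < (y_q - y_p)\cot(\theta_3(q) - \pi/6)$.

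Finally, the bound $\theta_3(q) \leq \theta_3(p) + \pi/3$ forces $\theta_3(q) - \pi/6 \leq \theta_3(p) + \pi/6$, with both values in $(0, 2\pi/3) \subset (0, \pi)$ where $\cot$ is strictly decreasing; hence $\cot(\theta_3(q) - \pi/6) \geq \cot(\theta_3(p) + \pi/6)$. Multiplying by $y_p > 0$ and combining with the inequality from \lemref{t:ddottheta} gives
\[
y_q\cot(\theta_3(q) - \pi/6) - y_p\cot(\theta_3(p) + \pi/6) \geq (y_q - y_p)\cot(\theta_3(q) - \pi/6) > x_q - x_p,
\]
which rearranges to $\f_4(q) < \f_2(p)$.

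The main obstacle is the first step — reading off the correct angles of the two perpendiculars from \figref{f:boundary} — since the whole computation hinges on having $+\pi/6$ at $p$ but $-\pi/6$ at $q$; the opposite signs would make the cotangent monotonicity push the wrong way. A secondary issue is verifying $\ddot{\theta}_3 < 0$ on a rising nodoid, which the remark after \lemref{t:ddottheta} does not cover explicitly and must be checked via the sign of $F_3$ from Eq.~\eqnref{e:ddottheta}.
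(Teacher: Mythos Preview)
Your proof is correct and is essentially a coordinate unpacking of the paper's one-line geometric argument: the paper observes that by \lemref{t:ddottheta} the ray $L_4(q)$ passes above $p$, and since the angle hypothesis makes $L_4(q)$ clockwise from $N_2(p)$, it therefore stays above $L_2(p)$, giving $\f_4(q)<\f_2(p)$. Your angle assignments $\theta_3(p)+\pi/6$ and $\theta_3(q)-\pi/6$ match the figure; for your secondary concern, on a rising nodoid with $\theta_3(q)>\theta_3(p)$ local convexity forces $\dot\theta>0$, and then $(n-1)(\cos\theta - Hy) = (n-2)\cos\theta - y\dot\theta + \cos\theta > \cos\theta - y\dot\theta$... more simply, $\dot\theta>0$ in Eq.~\eqnref{e:diffeq} gives $(n-1)Hy < (n-2)\cos\theta$, hence $F = y^{n-2}(\cos\theta - Hy) > y^{n-2}\cos\theta/(n-1) > 0$, so $\ddot\theta<0$ by Eq.~\eqnref{e:ddottheta}.
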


\begin{figure} 
\includegraphics[scale=.3]{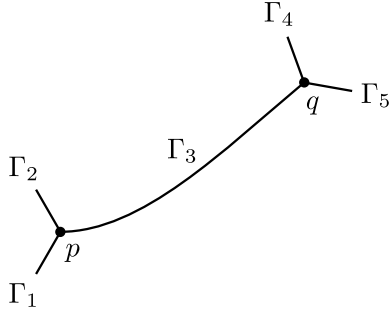}
\caption{Notation for a boundary involving five arcs, $\Gamma_1$ through $\Gamma_5$, for Corollaries~\ref{t:unduloidcorollary} and~\ref{t:graphcorollary}.} \label{f:boundary}
\end{figure}

\begin{proof}
By \lemref{t:ddottheta}, the ray $L_4(q) = \overrightarrow{q \f_4(q)}$ (the extension of the normal $N_4(q)$) passes above $p$ and by assumption it is clockwise from $N_2(p)$, so it stays above $L_2(p) = \overrightarrow{p \f_2(p)}$.  
\end{proof}

\begin{corollary} \label{t:graphcorollary}
Consider a piece of boundary, arriving at $q$ from the left with the notation of \figref{f:boundary}.  Assume $\theta_{3}(q) \in [\pi/6,\pi/2)$ and $\theta_{3}(p) \in [\pi/6,\theta_{3}(q)+\pi/3]$.  Also assume that $\theta_{3} \neq \pi \mod 2\pi$ between $p$ and $q$.
Then $\f_{5}(q) > \f_{1}(p)$.
\end{corollary}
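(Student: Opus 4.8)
This is the companion of \corref{t:unduloidcorollary}: there one bounds the feet $\f_2(p),\f_4(q)$ of the two arcs meeting $\Gamma_3$ on one side at its endpoints, and here one bounds the feet $\f_1(p),\f_5(q)$ of the arcs meeting it on the other side. I would prove it by the same method. The first step is to translate the claim into a statement about the two normal lines. By the $120^{\circ}$ condition at $p=v_{123}$ the tangent of $\Gamma_1$ at $p$ makes angle $\theta_3(p)+\pi/3$ with the horizontal modulo $\pi$, so the line $L_1(p)=\overrightarrow{p\,\f_1(p)}$ through $p$ perpendicular to $\Gamma_1$ makes angle $\theta_3(p)-\pi/6$ with the horizontal, i.e.\ $\pi/6$ clockwise from the tangent of $\Gamma_3$ at $p$; symmetrically, by the $120^{\circ}$ condition at $q=v_{345}$, the line $L_5(q)=\overrightarrow{q\,\f_5(q)}$ makes angle $\theta_3(q)+\pi/6$, i.e.\ $\pi/6$ counterclockwise from the tangent of $\Gamma_3$ at $q$. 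The feet $\f_1(p)$ and $\f_5(q)$ are where these lines meet $L$, so the corollary says that $L_1(p)$ meets $L$ strictly to the left of $L_5(q)$.

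Second, locate $L_1(p)$ relative to $q$ using the shape of $\Gamma_3$. The hypotheses $\theta_3(q)\in[\pi/6,\pi/2)$, $\theta_3(p)\in[\pi/6,\theta_3(q)+\pi/3]$, and $\theta_3\ne\pi\bmod 2\pi$ between $p$ and $q$ confine $\Gamma_3$ to a single arc of an unduloid (or nodoid) joining $p$ and $q$ which does not fold back, and along which $\ddot\theta_3$ has a fixed sign by the remark preceding \corref{t:unduloidcorollary}, negative in the unduloid case. Assume the unduloid case and split on the sign of $\theta_3(q)-\theta_3(p)$. If $\theta_3(p)\le\theta_3(q)$, then $\theta_3(q)-\theta_3(p)<\pi/2-\pi/6=\pi/3$ and \lemref{t:ddottheta}, applied to $\Gamma_3$ read from $p$ to $q$, gives $\Delta y/\Delta x>\tan(\theta_3(q)-\pi/6)\ge\tan(\theta_3(p)-\pi/6)$; that is, $L_1(p)$ passes below $q$. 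If $\theta_3(p)>\theta_3(q)$, then reading $\Gamma_3$ backward from $q$ to $p$ raises the angle by $\theta_3(p)-\theta_3(q)\le\pi/3$, and \lemref{t:ddottheta} now places the line through $p$ at angle $\theta_3(p)-\pi/6$ --- which is exactly $L_1(p)$ --- above $\Gamma_3$, hence above $q$.

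Third, conclude by a slope comparison, as in the two-line argument that ends the proof of \corref{t:unduloidcorollary}. Because $\theta_3(p)-\pi/6\le\theta_3(q)+\pi/6$ (using $\theta_3(p)\le\theta_3(q)+\pi/3$ in the second case and $\theta_3(p)<\theta_3(q)$ in the first), the line $L_1(p)$ is no steeper than $L_5(q)$; combining this with the position of $L_1(p)$ relative to $q$ found above --- and with the fact that $q$ lies to the right of $p$ --- one reads off that $L_1(p)$ reaches $L$ strictly to the left of $L_5(q)$, i.e.\ $\f_5(q)>\f_1(p)$.

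The hard part is not the geometric idea, which is identical to that of \corref{t:unduloidcorollary}, but the case bookkeeping. One must treat the nodoid case separately (there $\ddot\theta_3>0$ on $\theta_3\in(0,\pi/2)$, so the convexity comparison runs the other way and one invokes the reverse form of \lemref{t:ddottheta}); one must check that the monotonicity hypothesis of \lemref{t:ddottheta} really holds along the arc in question, passing to a subarc if $\theta_3$ overshoots its endpoint values; and, since $\theta_3(p)$ may exceed $2\pi/3$ and $\theta_3(q)$ may exceed $\pi/3$, one must track in each sign regime whether $L_1(p)$ and $L_5(q)$ meet $L$ to the left or to the right of $p$ and $q$ and verify in each that the two normal segments from the ends of $\Gamma_3$ down to $L$ are nested left-to-right in the claimed order. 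The only ingredients are \lemref{t:ddottheta}, the fixed sign of $\ddot\theta_3$ along a Delaunay arc, and the $120^{\circ}$ vertex condition.
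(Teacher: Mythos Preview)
Your setup is correct --- the normals $L_1(p)$ and $L_5(q)$ make angles $\theta_3(p)-\pi/6$ and $\theta_3(q)+\pi/6$ with the horizontal, and the goal reduces to showing one foot lies left of the other. But the argument you give for the unduloid case does not go through, and the paper's proof differs from yours precisely there.

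You try to apply \lemref{t:ddottheta} to the unduloid arc $\Gamma_3$ from $p$ to $q$. That lemma requires both $\ddot\theta<0$ throughout and $\theta(t)\in[\theta_p,\theta_q]$. For an unduloid, Eq.~\eqnref{e:ddottheta} gives $\ddot\theta<0$ only where $\sin\theta>0$; nothing in the hypotheses of the corollary prevents $\Gamma_3$ from passing through one or several full periods between $p$ and $q$, during which $\theta$ drops below zero (so $\ddot\theta>0$) and certainly leaves $[\theta_3(p),\theta_3(q)]$. The paper even flags this explicitly: ``This remains true if the unduloid passes through one or more minima between $p$ and $q$.'' Your parenthetical suggestion to ``pass to a subarc if $\theta_3$ overshoots'' does not address this --- restricting to a monotone subarc compares the wrong pair of points, and you still need a statement about the full arc joining $p$ to $q$.

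The paper handles the unduloid case with a different tool, \lemref{t:undularyray}, which you never invoke. That lemma compares the unduloid to the tangent sphere at $q$ and shows that the ray $L_5(q)$ (at angle $\pi/6$ from the tangent) stays beneath the \emph{entire} unduloid to the left of $q$, period structure and all; hence it stays below $p$, and then the slope comparison (``$\Gamma_1$ leaves $p$ clockwise of how $\Gamma_5$ leaves $q$'') finishes. Your nodoid/catenoid reasoning via convexity or \lemref{t:ddottheta} is in the right spirit and matches cases (2)--(3) of the paper's proof, but for the unduloid you need \lemref{t:undularyray}, not \lemref{t:ddottheta}.
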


\begin{proof}
There are three cases, depending on whether $\Gamma_3$ is an unduloid, a convex-up catenoid or nodoid, or a nodoid convex-down at $q$.  The assumption that $\theta_{\Gamma_3} \neq \pi \mod 2\pi$ between $p$ and $q$ prevents a nodoid from turning too far around between $p$ and $q$.
\begin{enumerate}
\item If $\Gamma_3$ is an unduloid, then by \lemref{t:undularyray}, 
the ray $L_5(q)$ passes downward beneath $\Gamma_3$, so in particular stays right of $p$.  (This remains true if the unduloid passes through one or more minima between $p$ and $q$.)  
\item If $\Gamma_3$ is a convex-up catenoid or nodoid, then convexity implies again that $L_5(q)$ stays right of $\Gamma_3$.
\item If $\Gamma_3$ is a convex-down nodoid, then by \lemref{t:ddottheta} with $\psi = \pi/6$ again $L_5(q)$ stays right of $\Gamma_3$.  (Applying the lemma requires reflecting horizontally and using that $\Gamma_3$ turns no more than $\pi/3$ radians from $p$ to $q$.)
\end{enumerate}
In all cases, since $\Gamma_1$ leaves $p$ clockwise of how $\Gamma_5$ leaves $q$, $\f_{1}(p) < \f_{5}(q)$.
\end{proof}

\section{Rotation notation}  \label{s:notation}

A nonstandard minimal double bubble's generating curves can be classified by the angles at which arcs leave each vertex.  For our analysis, it will suffice to know which arcs are leaving from the right and which from the left; there are therefore six cases, each covering $\pi/3$ radians, which we term ``notches."   Incrementing the rotation notch of a vertex corresponds to turning it counterclockwise until an arc passes the vertical, as occurs for $\Gamma_{3}$ from \figref{f:graphstructure} to \figref{f:offgraphstructure}, and from \figref{f:rotation}(a) to \ref{f:rotation}(b).  The extreme position with an arc leaving a vertex exactly at the vertical divides two consecutive notch cases.  If the limiting value of $\f$ along the vertical arc is $+\infty$ (or if the arc is a vertical line), the position is assigned the smaller rotation notch value, and if the limiting value is $-\infty$, the position is given the larger notch value.

\begin{figure}
\includegraphics[scale=.28]{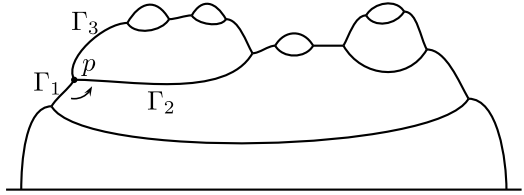}
\caption{From the curves of \figref{f:graphstructure}, vertex  $p = \bar{\Gamma}_1 \cap \bar{\Gamma}_2 \cap \bar{\Gamma}_3$ has turned one counterclockwise ``notch,'' since $\Gamma_{3}$ has passed the vertical.} \label{f:offgraphstructure}
\end{figure}

\begin{figure} 
\includegraphics[scale=.28]{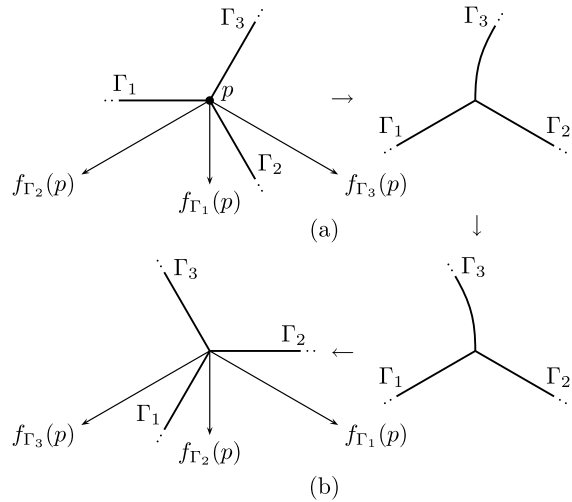}
\caption{A close-up view of $p$ as it turns one notch counterclockwise.  In (a), $\f_2(p) < \f_1(p) < \f_3(p)$; on the right, $\f_3(p) = +\infty$.  In (b), $\f_3(p) < \f_2(p) < \f_1(p)$; $\f_3(p) = -\infty$ on the right.} \label{f:rotation}
\end{figure}

The rotation numbers for our earlier $4 + 4$ bubble example are indicated in \figref{f:rotationexample}.  

\begin{figure} 
\includegraphics[scale=.28]{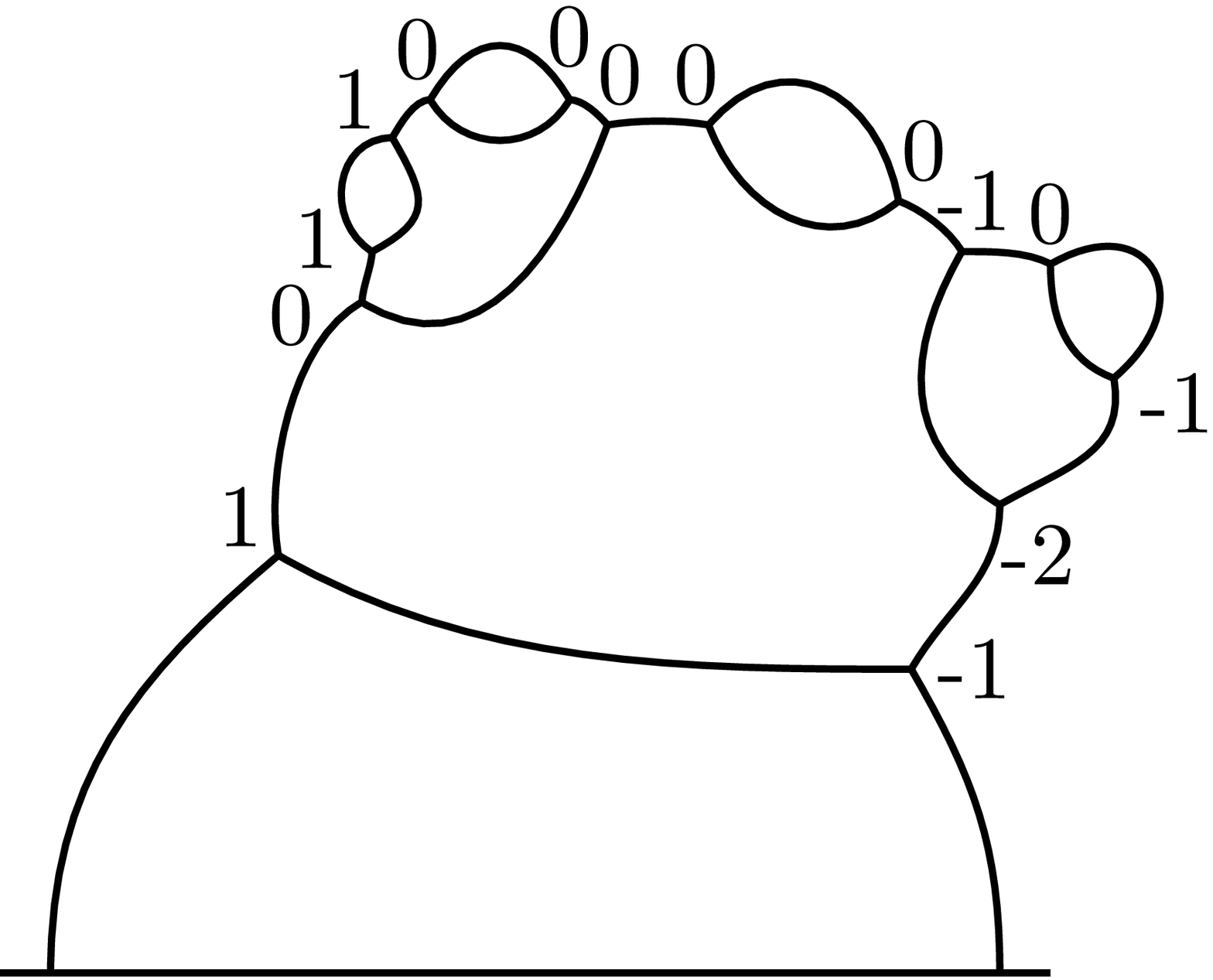}
\caption{This nonstandard double bubble has the same associated tree structure as those in Figures~\ref{f:graphstructure} and \ref{f:offgraphstructure}, but some vertices have been rotated a notch or two counterclockwise or clockwise, as marked.} \label{f:rotationexample}
\end{figure}

\section{Near-graph leaves and bubble stacks} \label{s:neargraph}

\subsection{Near-graph leaves} \label{s:neargraphleaf}

A ``leaf'' of a nonstandard minimizing double bubble corresponds to a leaf of its associated tree of \thmref{t:structure} and \figref{f:structure}.  We will use the notation of \figref{f:zerozeroleaf}. 

\begin{definition}
A leaf is {\em near graph} if vertices $p$ and $q$ are rotated respectively $(0,0)$ notches, $(0,1)$ notches or $(-1,0)$ notches from their positions in \figref{f:zerozeroleaf}.  (\figref{f:zerooneleaf} gives examples for the latter two cases.)  

We say that a near-graph leaf is {\em right-side up} if, with the notation of \figref{f:zerozeroleaf}, the region below $\Gamma_2$ is inside the bubble; and {\em upside down} if the region above $\Gamma_3$ is inside the bubble.
\end{definition}

\begin{figure}
\includegraphics[scale=.28]{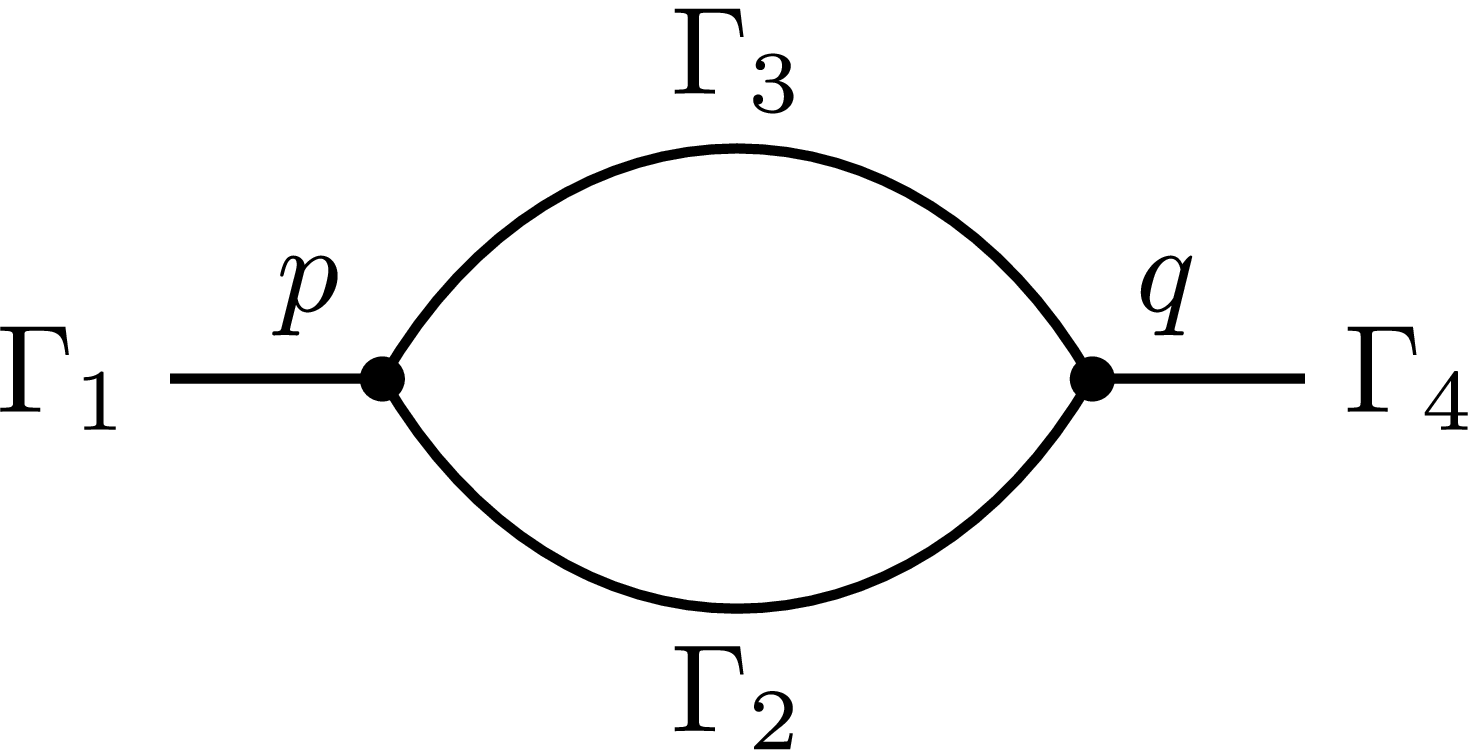}
\caption{A leaf involves four arcs: $\Gamma_{1}$, $\Gamma_{2}$, $\Gamma_{3}$, $\Gamma_{4}$.  In general each vertex can be rotated $m$ notches counterclockwise from the pictured configuration, in which all arcs are graphs and $m_p = m_q = 0$.} \label{f:zerozeroleaf}
\end{figure}

\begin{figure}
\includegraphics[scale=.28]{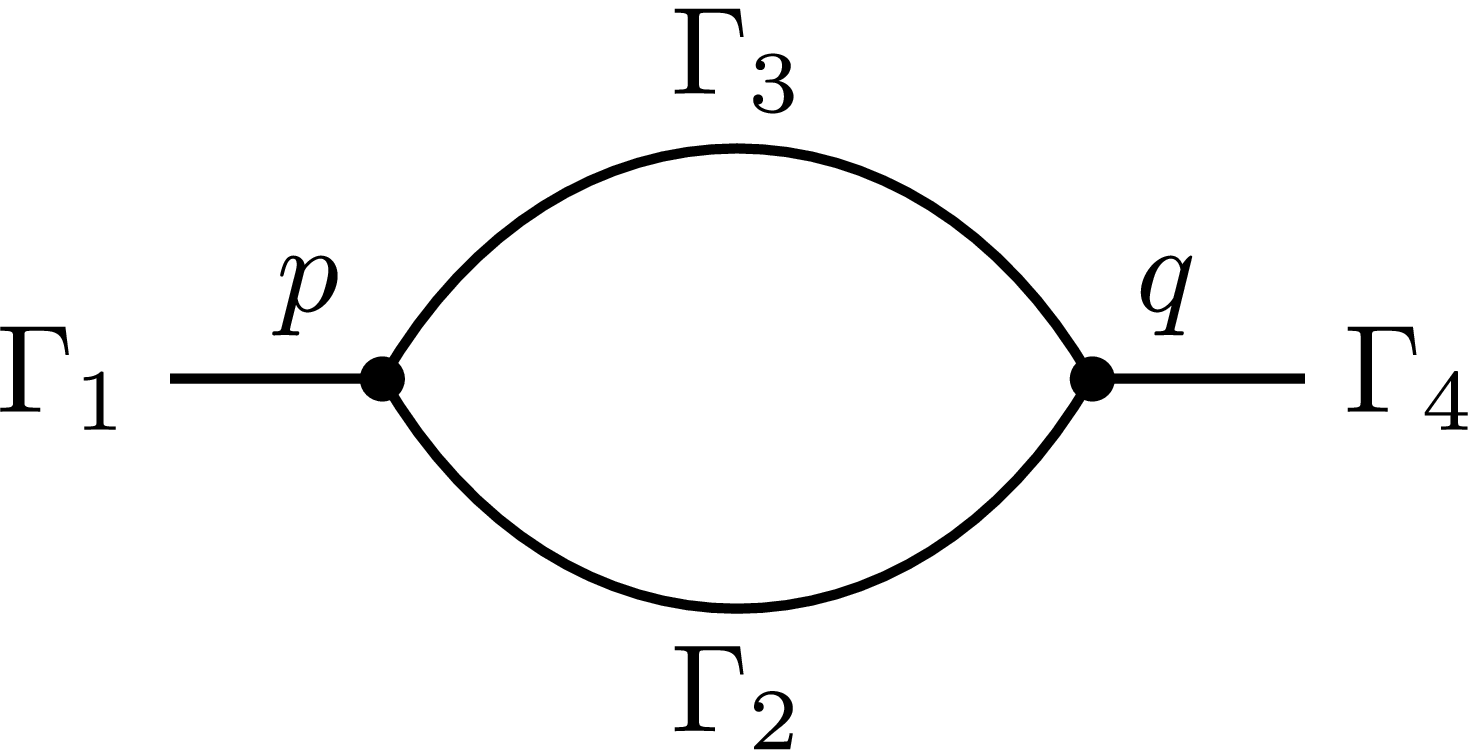}$\qquad$
\includegraphics[scale=.28]{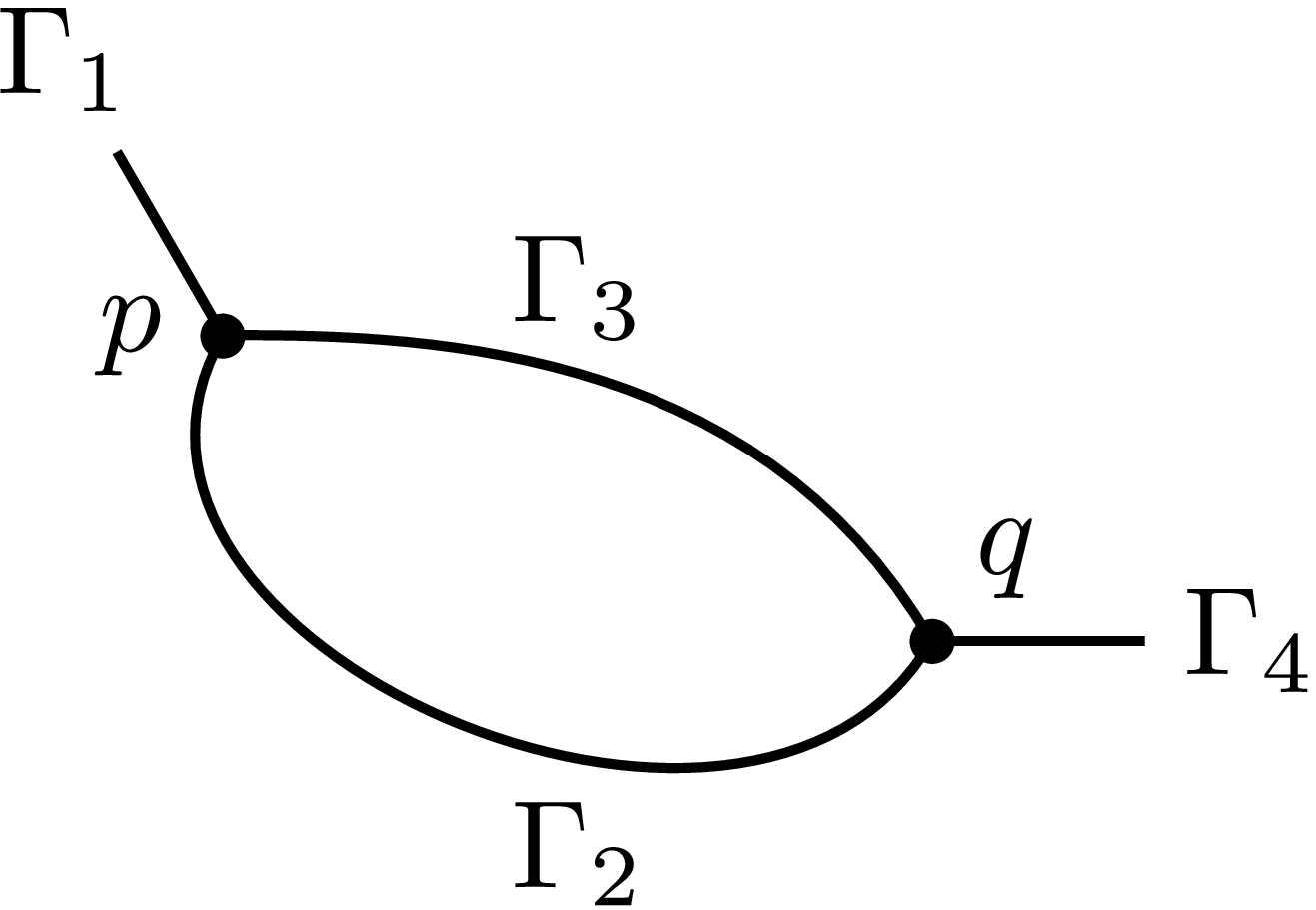}
\caption{Leaves with $p$ and $q$ vertices rotated respectively $(0,1)$ notches counterclockwise from their positions in \figref{f:zerozeroleaf} (left), or $(-1,0)$ notches (right).} \label{f:zerooneleaf}
\end{figure}

\begin{claim} \label{t:neargraphleafhigherpressure}
For an upside-down, near-graph leaf component, the region containing the leaf must have strictly higher pressure.  
\end{claim}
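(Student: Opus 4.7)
My plan is to argue by contradiction. Assume that the leaf is upside-down and near-graph but that the pressure $H_L$ of the region containing the leaf fails to be strictly greater than the pressure $H_P$ of the parent region, so $H_L - H_P \leq 0$. The contradiction will come from tracking the tangent angle $\theta_2$ of the inner boundary arc $\Gamma_2$ (the interface separating the leaf from its parent) as one traverses it from the left vertex $p$ to the right vertex $q$.

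In the upside-down orientation with the notation of \figref{f:zerozeroleaf}, the leaf region lies below $\Gamma_2$ (and above the outer boundary $\Gamma_3$), and the parent lies above $\Gamma_2$, so the right normal obtained by parameterizing $\Gamma_2$ from $p$ to $q$ points from parent to leaf at every point. With this normal, the mean curvature of $\Gamma_2$ is $H_0 := H_L - H_P$, which is non-positive by assumption. First I would tabulate the endpoint tangent angles in the three admissible notch subcases, using the $120^\circ$ Y-junction condition at each vertex and the fact that one notch is a rotation by $\pi/3$. The result is $(\theta_2(p),\theta_2(q)) = (\pi/3,-\pi/3)$ in the $(0,0)$ case, $(\pi/3,0)$ in the $(0,1)$ case, and $(0,-\pi/3)$ in the $(-1,0)$ case; in every subcase $\theta_2(p) > \theta_2(q)$ and both endpoint angles lie in $[-\pi/3,\pi/3]$.

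Next I would substitute $H_0 \leq 0$ into \eqnref{e:diffeq}, giving $\dot\theta_2 = -(n-1)H_0 + (n-2)\cos\theta_2/y \geq (n-2)\cos\theta_2/y$, which is strictly positive whenever $\theta_2 \in (-\pi/2,\pi/2)$ and $n \geq 3$. Thus $\theta_2$ is strictly monotonically increasing on any subarc of $\Gamma_2$ that lies in the graph range. The only way to reconcile strict monotone increase with $\theta_2(p) > \theta_2(q)$ is for $\Gamma_2$ to exit the graph range by crossing the vertical at some interior point and then return, which forces at least two vertical crossings of $\Gamma_2$ in contradiction to \corref{t:twicevertical}. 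The degenerate boundary cases $H_0=0$ are handled identically: a vertical hyperplane has $\theta_2\equiv\pi/2$ inconsistent with the endpoints, and a catenoid only approaches $\pi/2$ asymptotically, so its angle is still monotone and cannot decrease from $\theta_2(p)$ to $\theta_2(q)$.

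The main obstacle I expect is verifying the three tangent-angle pairs carefully and confirming that the right normal from the $p\to q$ parameterization really does point into the leaf at every point in each notch subcase; this requires keeping track of how the three $120^\circ$-separated arcs at $p$ and $q$ rotate under each allowed notch change, and of the fact that the sector identified as the leaf rotates with them. Once this bookkeeping is in place, the monotonicity of $\theta_2$ together with \corref{t:twicevertical} closes the argument and gives $H_L > H_P$.
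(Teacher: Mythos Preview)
Your approach is essentially the paper's: assume $H_L\le H_P$, deduce that the interior interface must be a convex-up nodoid (or, in the degenerate case, a catenoid or vertical hyperplane), and observe that none of these can meet the endpoint-angle constraints imposed by the near-graph notch positions. The paper states this last step tersely (``the angle constraints at $p$ and $q$ cannot be satisfied''); you spell it out via $\dot\theta>0$ on the graph range and \corref{t:twicevertical}, which is a perfectly good way to justify the same geometric fact.

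Two small corrections. First, you have the labels reversed: in \figref{f:zerozeroleaf}, $\Gamma_3$ is the \emph{upper} arc and $\Gamma_2$ the lower, so in the upside-down case the interface with the parent is $\Gamma_3$, not $\Gamma_2$ (this is why the paper's proof speaks of $\Gamma_3$). Your argument is about the interface regardless, so swapping the names fixes it. Second, a notch is a $\pi/3$-wide range of rotations, not a single angle; thus, e.g., in the $(0,0)$ case one has $\theta_3(p)\in[\pi/6,\pi/2]$ and $\theta_3(q)\in[-\pi/2,-\pi/6]$, not the single values $(\pi/3,-\pi/3)$. The inequality $\theta_3(p)\ge\theta_3(q)$ that drives your monotonicity contradiction still holds across the full ranges in all three subcases, so the logic survives, but you should state the ranges rather than midpoints.
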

\begin{proof}
If $\Gamma_3$ has positive pressure up in Figures~\ref{f:zerozeroleaf} or~\ref{f:zerooneleaf}, then $\Gamma_3$ must be a convex-up nodoid.  But then the angle constraints at vertices $p$ and $q$ cannot be satisfied, a contradiction.  If there is zero pressure across $\Gamma_3$, then $\Gamma_3$ can be a catenoid or a vertical hyperplane, but again neither matches the angle constraints.\end{proof}

\subsection{Right-side-up, near-graph component stacks} \label{s:neargraphstack}

\begin{definition}
A component stack consists of a base component and all its descendants in the associated tree $T$.  The base component cannot be the root component of $T$.  A right-side-up, graph component stack is a component stack in which all the associated generating curves are graph, and the root is downward from the base component.  
A right-side-up, near-graph component stack -- or ``near-graph stack," for short -- is the same, except for each internal boundary the left vertex can be rotated one notch clockwise or the right vertex rotated one notch counterclockwise.
\end{definition}

Examples are in Figures~\ref{f:zerozeroleaf} and \ref{f:zerooneleaf} (if the path to the root passes through $\Gamma_2$), and in \figref{f:neargraphcomponentstackexamples}.

\begin{figure}
\includegraphics[scale=.8]{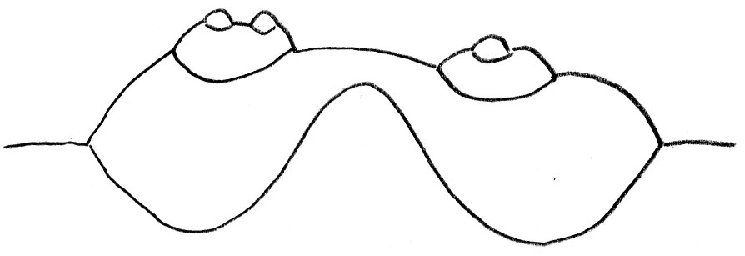}\\
\includegraphics[scale=.8]{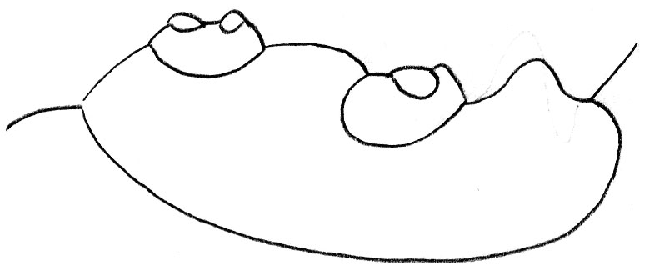}\\
\caption{Two examples of right-side-up, near-graph component stacks.  The first is in fact a graph stack.} \label{f:neargraphcomponentstackexamples}
\end{figure}

\begin{figure}
\includegraphics[scale=.9]{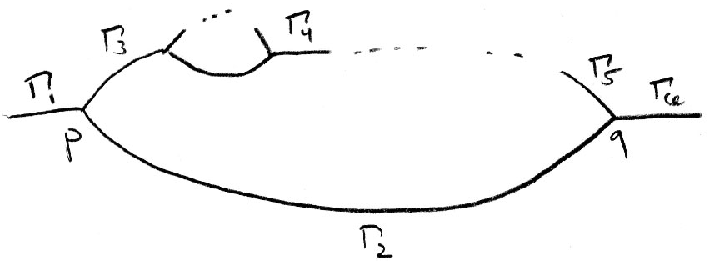}\\
\includegraphics[scale=.9]{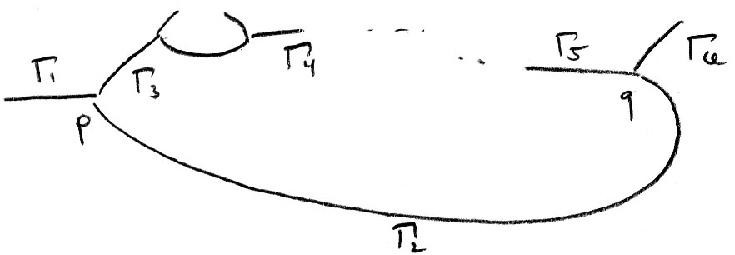}
\caption{Notation for right-side-up, near-graph component stacks.} \label{f:neargraphcomponentstackgeneric}
\end{figure}

\begin{lemma} \label{t:neargraphstack}
For a right-side-up, near-graph component stack in a minimizer, with the notation as in \figref{f:neargraphcomponentstackgeneric}, $\f_1(p) < \f_6(q)$.  
\end{lemma}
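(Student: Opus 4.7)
My plan is to induct on the number of components in the stack, with Corollary~\ref{t:graphcorollary} serving as the core geometric input and Claim~\ref{t:sameouterboundaries} organizing the inductive step. The two tools fit together because the outer boundary of the base component of a stack is, up to horizontal translation, a single Delaunay curve, so we can treat it as one ``$\Gamma_3$'' to which the graph corollary applies.

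For the base case, the stack is a single right-side-up near-graph leaf, as in Figures~\ref{f:zerozeroleaf} or~\ref{f:zerooneleaf}, and the outer boundary from $p$ to $q$ is a single Delaunay arc $\Gamma$. I would first verify, by enumerating the three allowed notch configurations $(0,0)$, $(0,1)$, $(-1,0)$ at each of $p$ and $q$ together with the $120$-degree vertex constraint, that $\theta_\Gamma(p)$ and $\theta_\Gamma(q)$ both lie in $[\pi/6,\pi/2)$ and satisfy $|\theta_\Gamma(q)-\theta_\Gamma(p)|\leq \pi/3$. The ``$\theta_3 \not\equiv \pi \bmod 2\pi$ between $p$ and $q$'' hypothesis of Corollary~\ref{t:graphcorollary} follows from Corollary~\ref{t:twicevertical}: a nodoid winding through vertical twice would produce a forbidden separating set. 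Corollary~\ref{t:graphcorollary} applied to $\Gamma$ then directly yields $\f_1(p) < \f_6(q)$.

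For the inductive step, given a stack $S$ with $N>1$ components and base component $C$, I walk along the outer boundary of $C$ from $p$ to $q$. By Claim~\ref{t:sameouterboundaries}, every outer-boundary piece of $C$ lies on one common Delaunay hypersurface up to horizontal translation, so the angular behavior on each piece is governed by the same $(H,F)$. The boundary is interrupted at triple-junction vertices $u_1,\ldots,u_k$ where substacks $S_1,\ldots,S_k$ branch upward out of $C$. On each segment of $C$ between consecutive junctions (including the initial and terminal segments incident to $p$ and $q$), I apply Corollary~\ref{t:graphcorollary} to obtain an inequality $\f_{\text{left}}(u_i)<\f_{\text{right}}(u_{i+1})$ between the two adjacent non-$C$ arcs. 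Across each substack $S_i$, which is itself a right-side-up near-graph stack with strictly fewer components, the inductive hypothesis supplies the corresponding $\f_{\text{left}}<\f_{\text{right}}$ inequality. Concatenating alternately these graph-corollary inequalities and inductive-hypothesis inequalities from $p$ to $q$ produces the desired $\f_1(p) < \f_6(q)$.

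The main obstacle I expect is the uniform verification that the angular hypotheses of Corollary~\ref{t:graphcorollary} are met on every segment of $C$ in every allowed notch configuration. At each junction $u_i$ I would use force-balancing (Lemma~\ref{t:forcebalancing}) plus the $120$-degree condition to pin down the outgoing angle of the next $C$-segment, then check that it lands in $[\pi/6,\pi/2)$ after a possible one-notch clockwise/counterclockwise rotation of one endpoint. The ``no winding past $\pi$'' proviso also needs a uniform argument along the whole base component, again from Corollaries~\ref{t:downvertical} and~\ref{t:twicevertical}. Once this case analysis is settled at the level of a single $C$-segment, the induction is purely bookkeeping.
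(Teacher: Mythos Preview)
Your plan hinges on applying Corollary~\ref{t:graphcorollary} to the outer boundary of the base component, but the angular hypotheses of that corollary are not met in the generic configuration. Corollary~\ref{t:graphcorollary} requires $\theta_\Gamma(q)\in[\pi/6,\pi/2)$, i.e.\ the arc must be \emph{rising} as it arrives at $q$. For a $(0,0)$ near-graph leaf, however, the outer (top) arc $\Gamma_3$ leaves $p$ rising and arrives at $q$ \emph{falling}: with $\theta_1(p),\theta_4(q)\in[-\pi/6,\pi/6]$ and the $120^\circ$ vertex condition, one has $\theta_3(p)\in[\pi/6,\pi/2]$ but $\theta_3(q)\in[-\pi/2,-\pi/6]$. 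The same obstruction hits the rightmost outer-boundary segment in your inductive step, and a mirror version hits the $p$-end in the $(-1,0)$ case. So the ``uniform verification'' you flag as the main obstacle is not a routine case check that one merely has to carry out; it actually fails, and Corollary~\ref{t:graphcorollary} cannot be the engine here.

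The paper's argument does not use Corollary~\ref{t:graphcorollary} at all for this lemma. The $(0,1)$ and $(-1,0)$ cases are handled first by a one-line convexity observation on the \emph{lower} arc $\Gamma_2$, which is then a convex-up nodoid, so the ray $L_6(q)$ stays above it and $\f_1(p)<\f_6(q)$ is immediate. In the remaining $(0,0)$ case the proof is by contradiction via Corollary~\ref{t:separatingset}: one checks $\f_1(p),\f_6(q)\in\f(\Gamma_2)$, and if $\f_6(q)\le\f_1(p)$ then both values also lie in $\f(\Gamma_3)$, producing a forbidden separating set. The inductive step likewise chains the inequalities $\f(\Gamma_3)<\f(\Gamma_4)<\cdots<\f(\Gamma_5)$ by combining the inductive hypothesis across each child stack with the fact that overlapping $\f$-images of adjacent outer arcs would create a separating set; no direct geometric corollary on individual segments is invoked.
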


\begin{proof}
Assume without loss of generality that vertex $p$ is rotated as in \figref{f:neargraphcomponentstackgeneric} -- if in fact $p$ is rotated $m_p = -1$ notches, then flipping the picture horizontally gives the desired positioning.  

If $m_q = 1$ (second case of \figref{f:neargraphcomponentstackgeneric}), then $\Gamma_2$ is a convex-up nodoid since it turns past the vertical, and the ray $L_6(q)$ therefore stays above $\Gamma_2$, implying $\f_1(p) < \f_6(q)$.  

Therefore assume $p$ and $q$ are rotated as in the first case of \figref{f:neargraphcomponentstackgeneric}; $m_p = m_q = 0$.  
The proof now is by induction on the height of the stack.  

If the component stack is just a leaf (height one), then $\Gamma_3$ goes from $p$ to $q$ as in \figref{f:zerozeroleaf}.  
Both $\f_1(p), \f_4(q) \in \f(\Gamma_2)$.  If $\f_4(q) \leq \f_1(p)$, then $\f_3(p) > \f_1(p)$ and $\f_3(q) < \f_4(q)$ imply that $\f_1(p), \f_4(q) \in \f(\Gamma_3)$, too, giving a $\Gamma_{1,2,3}$ separating set.

If the component stack has height greater than one, then by induction and to prevent internal separating sets $\f(\Gamma_3) < \f(\Gamma_4) < \cdots < \f(\Gamma_5)$.  Since $\f_1(p) < \f_3(p) \leq \sup \f(\Gamma_3)$ and $\inf \f(\Gamma_5) \leq \f_5(q) < \f_6(q)$, again it must that $\f_1(p) < \f_6(q)$.
\end{proof}

\begin{corollary} \label{t:stackdown}
An arc of outer boundary cannot turn downward past the vertical after leaving a right-side-up, near-graph component stack.
\end{corollary}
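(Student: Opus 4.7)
The plan is to proceed by contradiction: suppose some outer-boundary arc leaves the near-graph stack and turns downward past the vertical. By horizontal reflection I may assume it is $\Gamma_6$ of \figref{f:neargraphcomponentstackgeneric} that leaves $q$ and has a vertical-tangent interior point $r$. Since $\Gamma_6$ is not a graph and cannot be a vertical hyperplane (which does not ``turn''), \thmref{t:delaunay} forces it to be a nodoid, and at $r$ the normal line is horizontal, so $\f_6(r)=\infty$. Recall that $\dot\f_6$ is proportional to the force $F_6$, which is nonzero for a nodoid; hence $\f_6$ is strictly monotonic along $\Gamma_6$, and \corref{t:twicevertical} prevents a second interior vertical tangent. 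Consequently $\f_6(\Gamma_6)$ is a connected arc of $L\cup\{\infty\}$ passing through $\infty$ exactly once.

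Next I would invoke \lemref{t:neargraphstack} to obtain the strict inequality $\f_1(p)<\f_6(q)$. The key step is to produce an interior point $r^*\in\Gamma_6$ with $\f_6(r^*)=\f_1(p)$: the image $\f_6(\Gamma_6)$ is the complement in $L\cup\{\infty\}$ of the real interval between $\f_6(q)$ and the value $\f_6(q')$ at the far endpoint $q'$ of $\Gamma_6$, and in the typical case $\f_1(p)$ lies outside this complementary interval, so the intermediate value theorem yields $r^*$. I then perturb $x$ slightly off $\f_1(p)$: since $r^*$ is interior, $\f_6$ takes every value in an open neighborhood of $\f_1(p)$; and since $\f_1$ is strictly monotonic near the endpoint $p$, it takes values in an open one-sided half-interval ending at $\f_1(p)$. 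Picking $x$ in their intersection produces interior points $p^*\in\Gamma_1$ and (slightly perturbed) $r^*\in\Gamma_6$ with $\f_1(p^*)=\f_6(r^*)=x$.

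The final step is to verify that $\{p^*,r^*\}$ separates $\cup\bar\Gamma_i$: by the tree structure of \thmref{t:structure}, a component stack attaches to the rest of the minimizer only at its two top vertices $p$ and $q$, so removing interior points of the two outgoing arcs $\Gamma_1$ and $\Gamma_6$ disconnects the stack---together with the short subarcs from $p$ to $p^*$ along $\Gamma_1$ and from $q$ to $r^*$ along $\Gamma_6$---from the rest of the generating-curve diagram. Since $p^*$ and $r^*$ lie in interiors of distinct arcs that are not spherical caps of the root component, this contradicts \corref{t:separatingset}.

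The hard part will be the degenerate subcase in which $\f_6(q')$ lies strictly between $\f_1(p)$ and $\f_6(q)$, so that $\f_6(\Gamma_6)$ happens to miss $\f_1(p)$ in its image. To handle this I would rerun the argument with $x$ chosen near $\f_6(q')$ instead, invoking an analogous inequality at the far end of $\Gamma_6$---either via a second application of \lemref{t:neargraphstack} if $q'$ sits at the top of another near-graph stack, or via \corref{t:unduloidcorollary} or \corref{t:graphcorollary} otherwise.
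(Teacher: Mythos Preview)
Your approach is the same as the paper's---use the inequality $\f_1(p)<\f_6(q)$ from \lemref{t:neargraphstack}, show that an outer arc turning past vertical forces a separating set across $\Gamma_1$ and $\Gamma_6$, and contradict \corref{t:separatingset}---but you have made it much harder than necessary, and your analysis of the ``degenerate subcase'' is muddled.

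The point you are missing is that you never need the far endpoint $q'$ of $\Gamma_6$ at all.  Once $\Gamma_6$ leaves $q$ heading rightward and turns downward to a vertical tangent at $r$, the formula $\f = x + y\tan\theta$ shows that along the subarc from $q$ to $r$ the value $\f_6$ runs monotonically from $\f_6(q)$ all the way to $-\infty$.  Hence already $[-\infty,\f_6(q))\subset\f_6(\Gamma_6)$, and since $\f_1(p)<\f_6(q)$ you immediately get an interior point $r^*\in\Gamma_6$ with $\f_6(r^*)=\f_1(p)$.  (Symmetrically, the paper phrases it for $\Gamma_1$: if $\Gamma_1$ turns past vertical then $(\f_1(p),+\infty]\subset\f(\Gamma_1)$, so $\f_6(q)\in\f(\Gamma_1)$.)  Either way the proof is two lines; there is no ``hard part.''

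Incidentally, your description of the degenerate subcase is inverted.  If $\f_6(q')$ lies strictly between $\f_1(p)$ and $\f_6(q)$, then the missing interval of $\f_6(\Gamma_6)$ is $(\f_6(q'),\f_6(q))$, and $\f_1(p)<\f_6(q')$ puts $\f_1(p)$ \emph{outside} that interval, hence \emph{in} the image---so this case is not problematic.  The genuinely bad ordering would be $\f_6(q')<\f_1(p)<\f_6(q)$, but as just argued the subarc from $q$ to $r$ already rules it out without any appeal to what happens near $q'$.  Drop the last paragraph of your proposal entirely.
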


\begin{proof}
By \lemref{t:neargraphstack}, $\f_1(p) < \f_6(q)$.  If $\Gamma_1$ turns downward past the vertical, then $\f_6(q) \in (\f_1(p), +\infty] \subset \f(\Gamma_1)$, a contradiction of \corref{t:separatingset}.  Similarly $\Gamma_4$ cannot turn downward past the vertical.
\end{proof}

\noindent (\lemref{t:neargraphstack} is a generalization of~\cite[Lemma~7.5]{ReichardtHeilmannLaiSpielman03dblbblR4}, and \corref{t:stackdown} compares to~\cite[Prop.~7.6]{ReichardtHeilmannLaiSpielman03dblbblR4}.)

\section{Leaf and component classification} \label{s:leafcomponentclassification}

We now classify the types of leaves and general components that may occur in a minimizer.  In flavor, this section is similar to the leaf classification~\cite[Prop.~7.1]{ReichardtHeilmannLaiSpielman03dblbblR4} -- however, we will complicate things by considering component stacks and not just leaves; and simplify things by applying Corollaries~\ref{t:unduloidcorollary} and~\ref{t:graphcorollary}.

\begin{theorem} \label{t:componentclassification}
In a minimizer, every leaf must be near graph, and every component stack of height $> 1$ must be near graph and right-side up.
\end{theorem}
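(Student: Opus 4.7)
The plan is induction on the height $h$ of the subtree of descendants of the component in the tree $T$ from \thmref{t:structure}. Horizontal reflection halves the case count, and at each vertex there are only six rotation notches, so the analysis reduces to a finite check. Outside the near-graph set, for each configuration the goal is to exhibit either an arc that goes vertical twice (contradicting \corref{t:twicevertical}) or a value $x \in L \cup \{\infty\}$ for which $\f^{-1}(x)$ contains points in the interiors of distinct $\Gamma_i$ that separate $\cup \bar{\Gamma}_i$ (contradicting \corref{t:separatingset}).

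Base case ($h = 1$, leaves). With the notation of \figref{f:zerozeroleaf}, enumerate all notch pairs $(m_p, m_q)$ for the two vertices of the leaf. For each non-near-graph pair I would use \corref{t:unduloidcorollary} to control the $\f$-values coming out of the outer arcs at $p$ and $q$, and \corref{t:graphcorollary} to control the $\f$-image of the interior arc $\Gamma_3$ (or $\Gamma_2$, after reflection); the two bounds combine to force $\f^{-1}(x)$ to hit at least two distinct interior arcs for some $x$, producing the required separating set. The orientation of the leaf (right-side up versus upside down) does not alter the notch combinatorics; \claimref{t:neargraphleafhigherpressure} merely pins down the admissible pressures once the upside-down case has been recognized as near-graph.

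Inductive step ($h > 1$). Consider a base component $C$ whose child stacks are, by the inductive hypothesis, near-graph and right-side up. Apply \lemref{t:neargraphstack} to each child to obtain $\f_1(p) < \f_6(q)$ across that child, and apply \corref{t:stackdown} to forbid the outer boundary arcs of $C$ from turning downward past the vertical after leaving the child. \claimref{t:sameouterboundaries} further restricts all outer boundary arcs of $C$ to be parts of a common Delaunay hypersurface up to horizontal translation. With these structural constraints in hand, repeat the $(m_p, m_q)$ case analysis of the base case on each internal boundary of $C$. Any non-near-graph rotation at an internal vertex of $C$, or an upside-down orientation of $C$ itself, forces an outer arc of a child stack to turn in a way that either violates \corref{t:stackdown} or produces a separating set via \corref{t:graphcorollary} applied along the spine of $C$ together with the adjacent child.

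The main obstacle is the bookkeeping of borderline rotations: in several cases a naive attempt to construct a separating set fails because the relevant Delaunay arc ``undercuts'' the ray $L_i(q)$ one would like to use. Corollaries~\ref{t:unduloidcorollary} and~\ref{t:graphcorollary} exist precisely to handle these borderline configurations, so the proof essentially reduces to verifying that the hypothesis of one of them applies in each excluded case. The overall argument is a uniform-in-$n$ strengthening of the four-dimensional leaf classification \cite[Prop.~7.1]{ReichardtHeilmannLaiSpielman03dblbblR4}, made possible by the additional angular control from \secref{s:delaunaylemmas}.
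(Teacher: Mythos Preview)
Your outline matches the paper's architecture: induction on stack height, a finite $(m_p,m_q)$ notch enumeration modulo the reflection and relabeling symmetries, and elimination of each excluded case via \corref{t:twicevertical}, \corref{t:separatingset}, \corref{t:stackdown}, and the Delaunay corollaries of \secref{s:delaunaylemmas}. So the strategy is right. But two of the ``borderline'' cases are not covered by the tools you name, and this is where the paper does real work that your plan glosses over.

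First, in case $(0,-1)$ with $\theta_2(q)\ge 0$ (so $\Gamma_2$ is still rising at $q$), the hypothesis of \corref{t:unduloidcorollary} fails: that corollary needs the arc strictly monotone with $\theta$ ending above $\pi/6$, which is exactly what is not available here. The paper has to introduce an additional elementary estimate, \lemref{t:simplegeometrylemma}, comparing $\tan(\phi+\pi/3)-\tan\phi$ at the two vertices to conclude $\f_3(q)<\f_3(p)$; alternatively one can invoke \cite[Lemma~5.9]{HutchingsMorganRitoreRos00dblbblR3}. Either way, something beyond Corollaries~\ref{t:unduloidcorollary} and~\ref{t:graphcorollary} is required, contrary to your claim that those two ``exist precisely to handle these borderline configurations.'' Second, in case $(0,2)$ when one or more near-graph child stacks sit along $\Gamma_3$, \corref{t:graphcorollary} cannot be applied directly to $\Gamma_3$ from $p$ to $q$ because $\Gamma_3$ is no longer a single arc. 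The paper uses \claimref{t:sameouterboundaries} together with a height comparison ($y(q)>y(p)$ via the vertical point of $\Gamma_2$) to find a point $q'$ on the first piece of $\Gamma_3$ with $y(q')=y(q)$ and $\theta_3(q')=\theta_3(q)$, then horizontally translates the ray $L_2(q)$ to $q'$ before invoking \corref{t:graphcorollary}. Your inductive step mentions \claimref{t:sameouterboundaries} but does not indicate this translation trick, without which the $(0,2)$ stack case does not close. These are the two places where ``verifying that the hypothesis of one of them applies'' is precisely what fails.
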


\begin{proof}
The proof will be by induction on the height of the stack.  
To reduce repetitive arguments, we will simultaneously consider leaves and component stacks; on a first reading, however, it may be easier to specialize just to leaves.
Diagrams will be condensed by using \raisebox{-.2em}{\includegraphics[scale=.28]{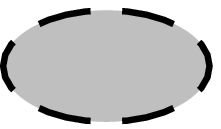}} to indicate the possible presence of one or more near-graph leaves or right-side-up, near-graph stacks.  

Notation: For leaves we will use use the notation of \figref{f:zerozeroleaf}.  Cases will be separated according to vertex rotations in notches away from their positions in that diagram.  For the base component of a component stack of height $>1$, cases will be separated according to vertices' notches away from their positions in one of the cases of \figref{f:stacknotation}.  Here, either $\Gamma_2$ or $\Gamma_3$ is no longer a single arc, but the union of multiple outer boundary arcs (all pieces of the same Delaunay hypersurface by \claimref{t:sameouterboundaries}).  

\begin{figure} 
\includegraphics[scale=.28]{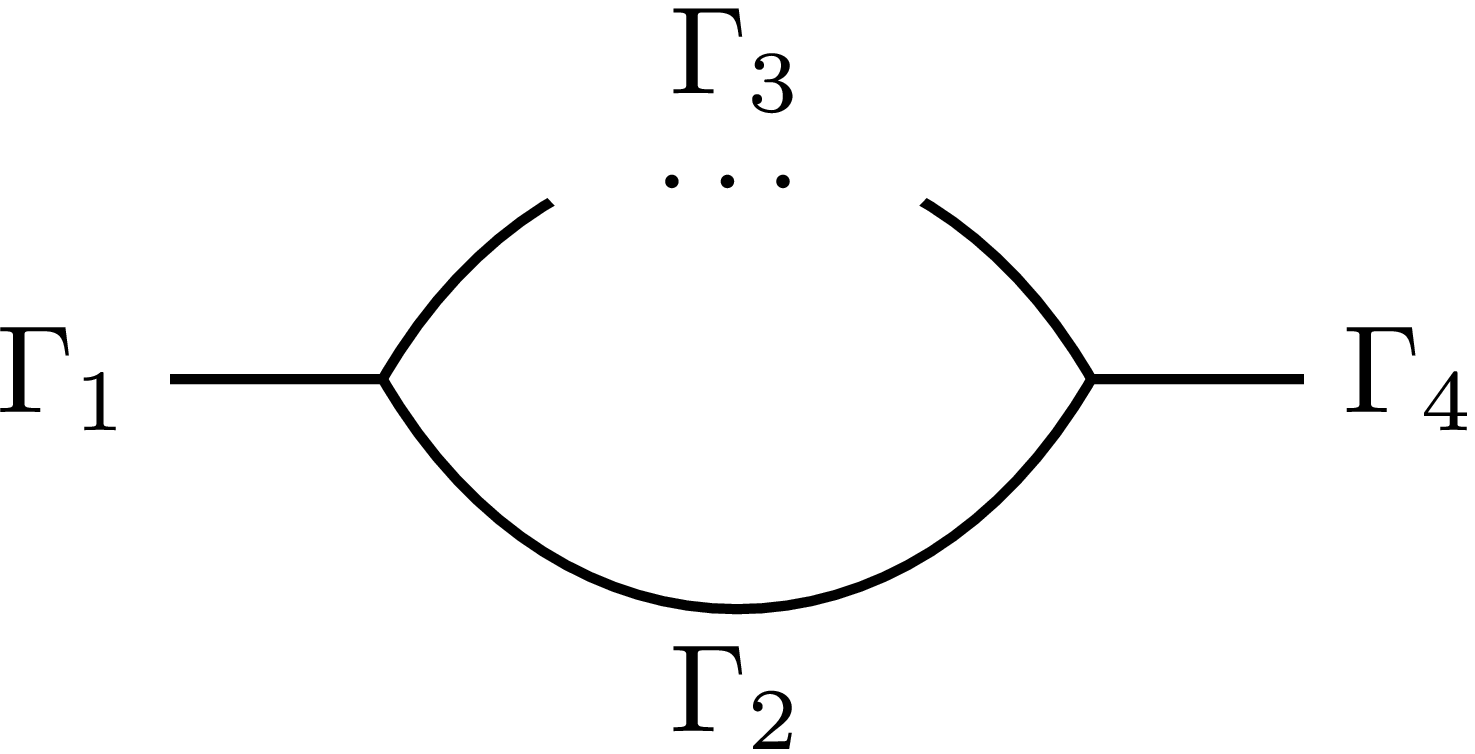}$\qquad$
\includegraphics[scale=.28]{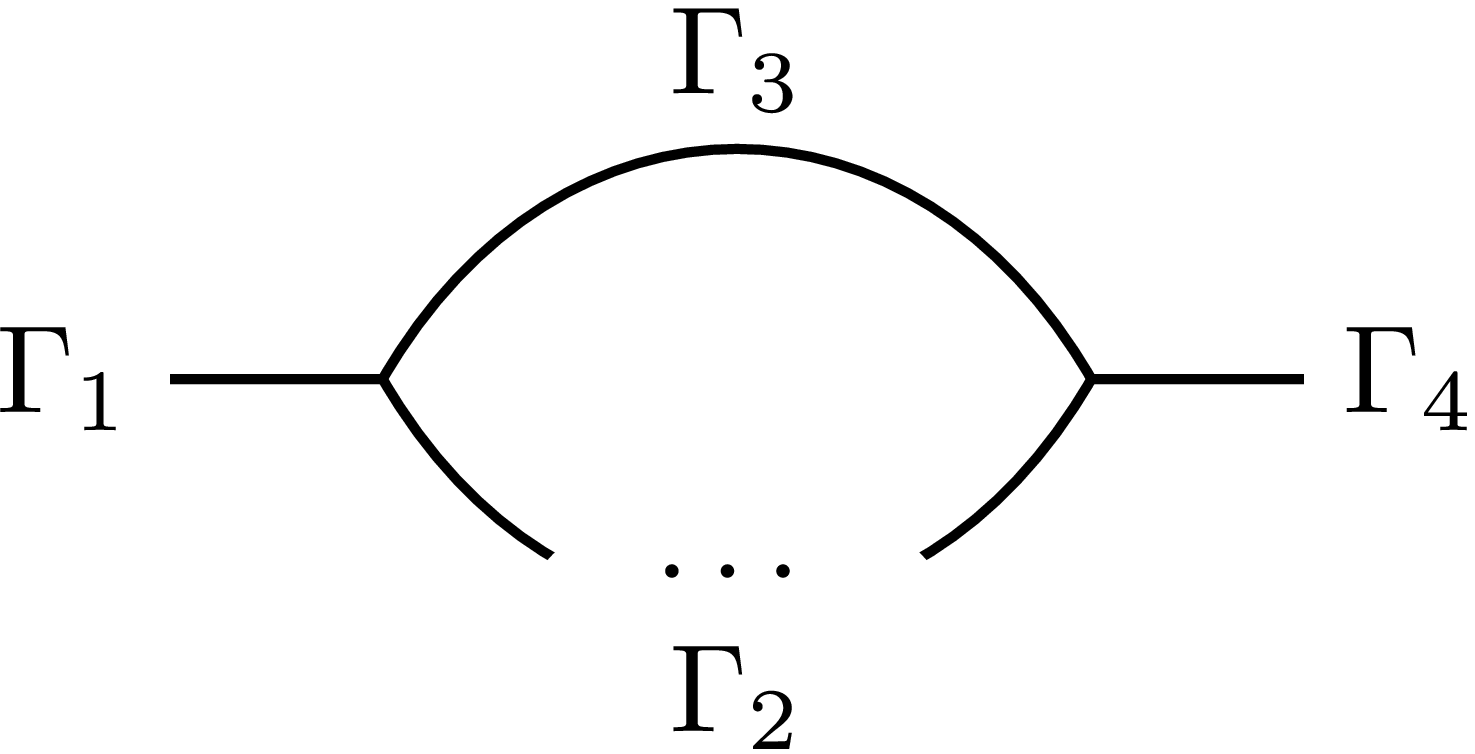}
\caption{Notation for the base component of a stack of bubbles.  On the left (right, respectively), the root of the associated tree is downward (upward, resp.), and leaves are upward (downward).} \label{f:stacknotation}
\end{figure}

Note also the symmetries; case $(i,j)$ is symmetrical under relabeling to $(j-3, i-3)$, and under horizontal reflection to $(-j, -i)$.

The most difficult cases will be $(0,2)$ (symmetrical to $(-1,-3)$) and $(-1,-2)$.

\smallskip\noindent$\bullet$ Cases $(0,0)$ and $(0,1)$: See Figures~\ref{f:(0,0)stack} and~\ref{f:(0,1)stack}.  This case includes all leaves or base components with $\theta_{1}(p) \in [-\pi/6,\pi/6]$ and $\theta_{4}(q) \in [-\pi/6,\pi/2)$ (and $\theta_4(q) = \pi/2$ if $\f_4(q) = +\infty$).  A $(0,0)$ or $(0,1)$ leaf is near graph, so is allowed in the statement of the theorem.  If instead the component is the base of a component stack with one or more near-graph stacks sitting above it, overall we still have an allowed near-graph stack.  If it has a near-graph leaf sitting below it, then by \claimref{t:neargraphleafhigherpressure} the leaf's region has higher pressure.  However, $\Gamma_3$ must have positive pressure down to match the angle constraints at $p$ and $q$, a contradiction.

\begin{figure} 
\includegraphics[scale=.28]{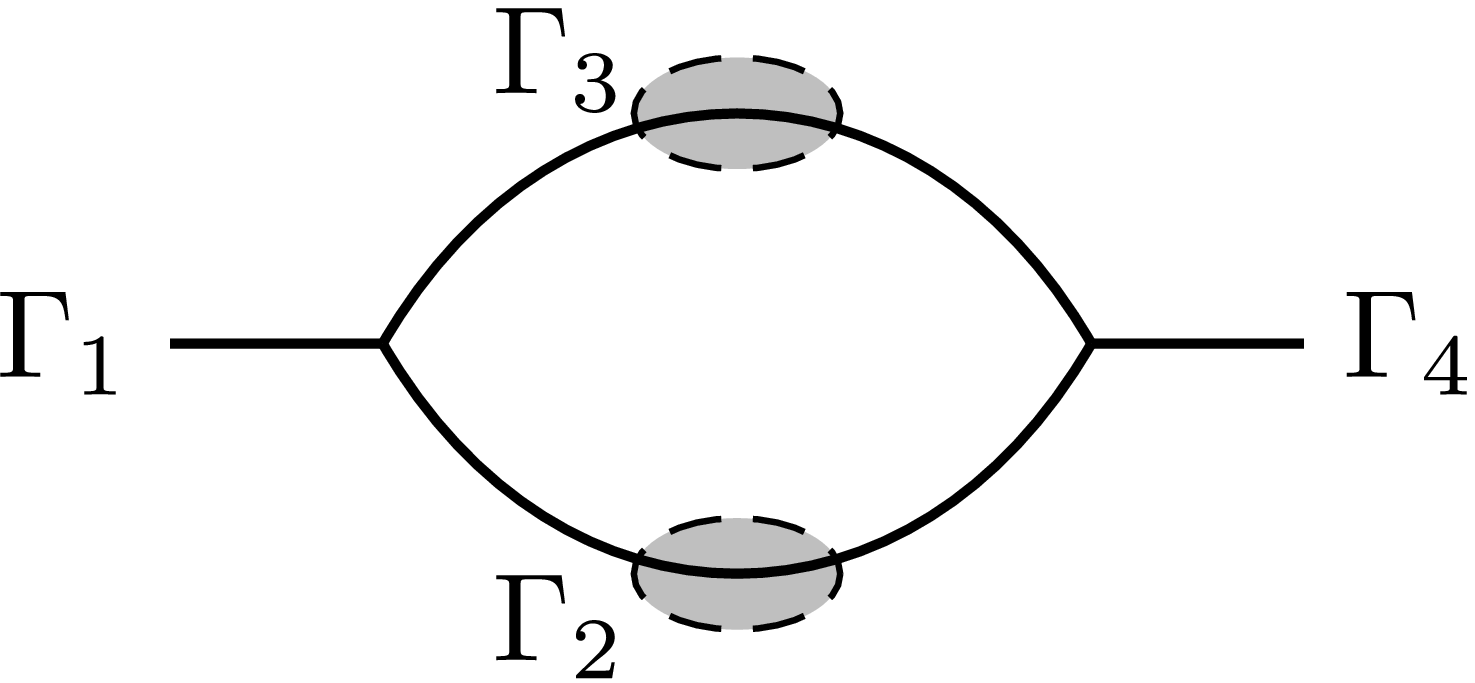}
\caption{Case (0,0).  Either a $(0,0)$ leaf -- i.e., in which vertices $p$ and $q$ are each rotated zero notches from their positions in \figref{f:zerozeroleaf} -- or the $(0,0)$ base component of a stack.  In the latter case, the upper \protect\raisebox{-.2em}{\protect\includegraphics[scale=.28]{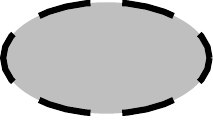}} indicates the possible presence of one or more right-side-up, near-graph component stacks along $\Gamma_3$, while the lower \protect\raisebox{-.2em}{\protect\includegraphics[scale=.28]{images/bubblestack}} indicates the possible presence of upside-down, near-graph leaves along $\Gamma_2$.} 
\label{f:(0,0)stack}
\end{figure}

\begin{figure}
\begin{tabular}{c@{$\quad$}c@{$\quad$}c}
\subfigure[Case (0,1)]{\label{f:(0,1)stack}\includegraphics[scale=.28]{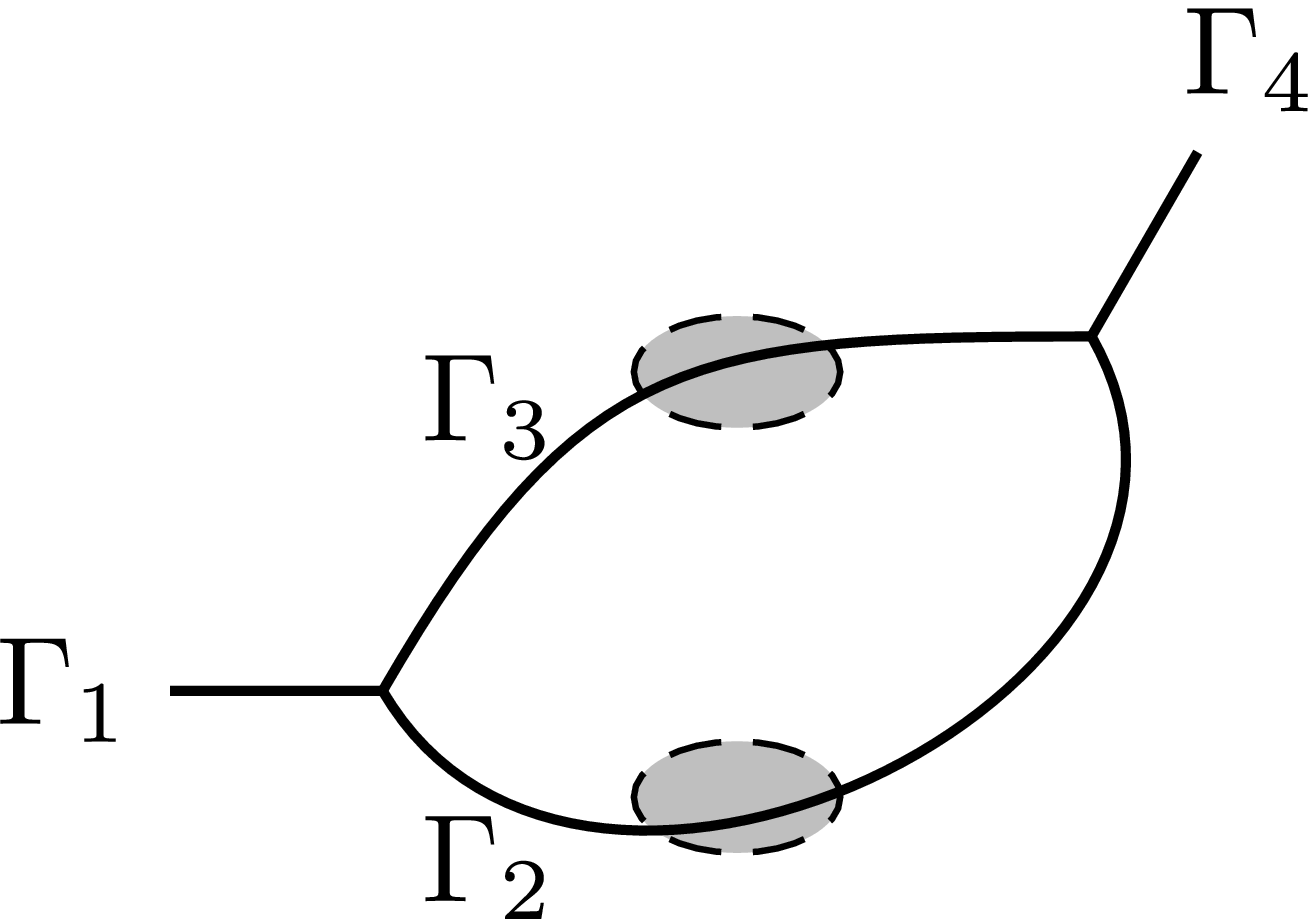}}&
\subfigure[Case (0,2)]{\label{f:(0,2)stack}\includegraphics[scale=.28]{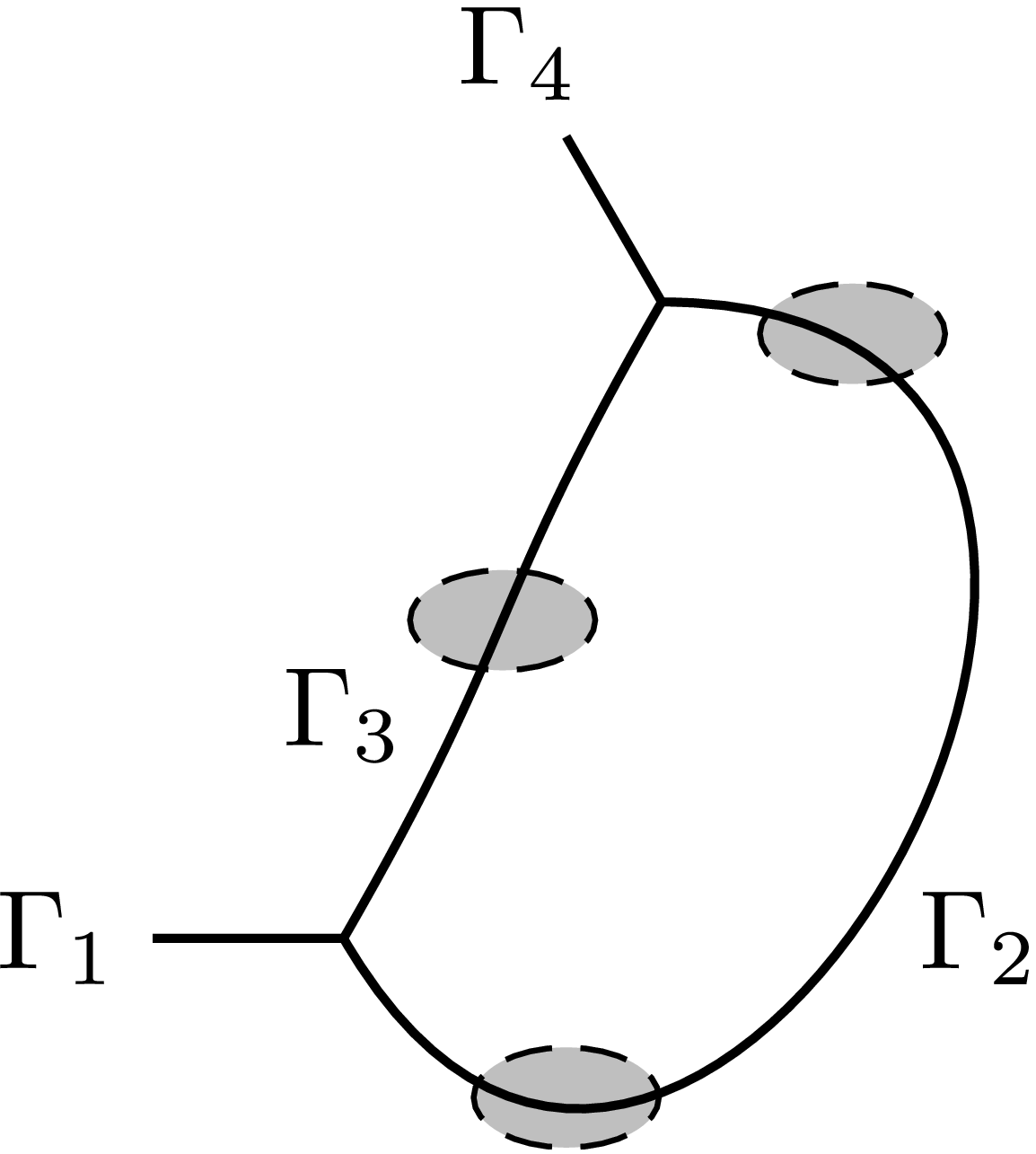}}&
\subfigure[Case (0,3)]{\label{f:(0,3)stack}\includegraphics[scale=.28]{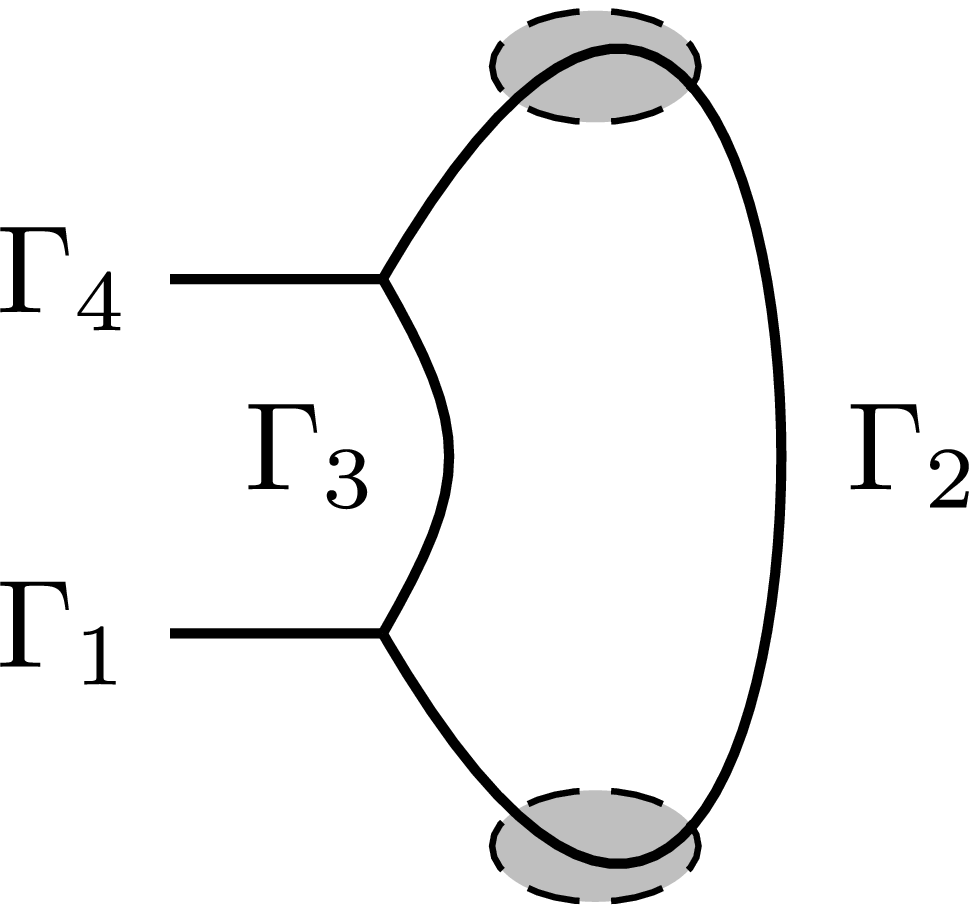}}\\
\subfigure[Case (0,-1)]{\label{f:(0,-1)stack}\includegraphics[scale=.28]{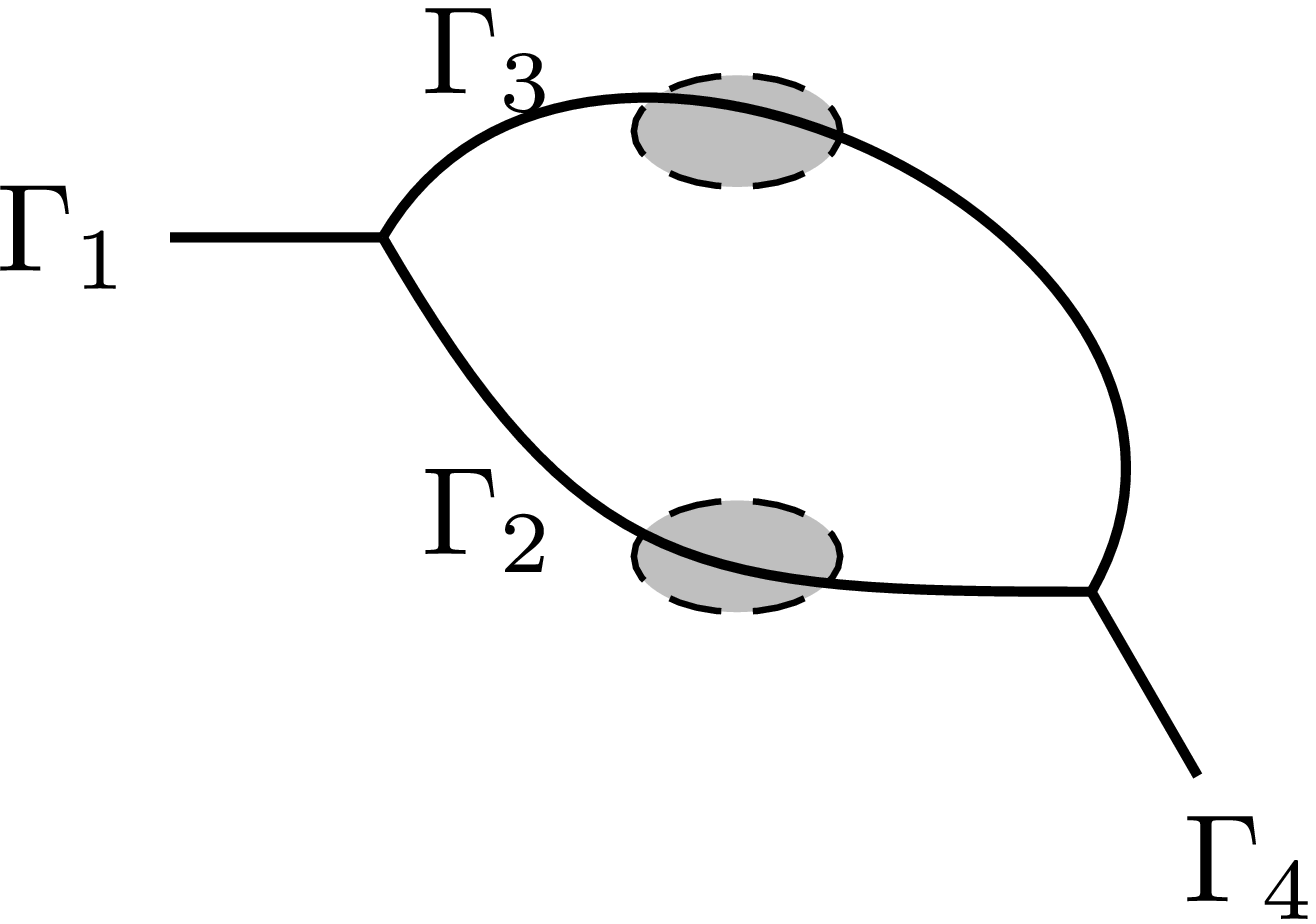}}&
\subfigure[Case (0,-2)]{\label{f:(0,-2)stack}\includegraphics[scale=.28]{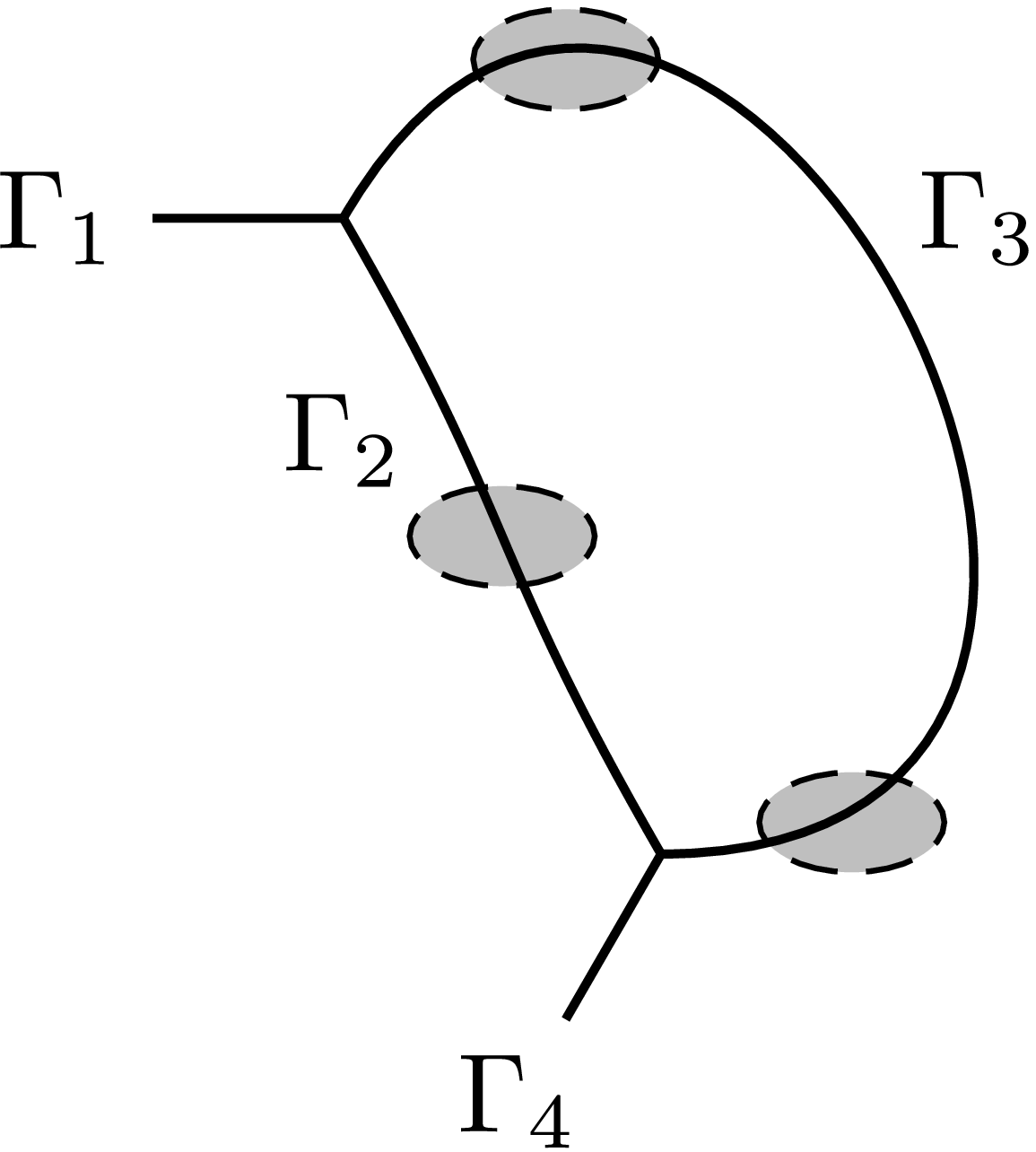}}&
\subfigure[Case (-1,-2)]{\label{f:(-1,-2)stack}\includegraphics[scale=.28]{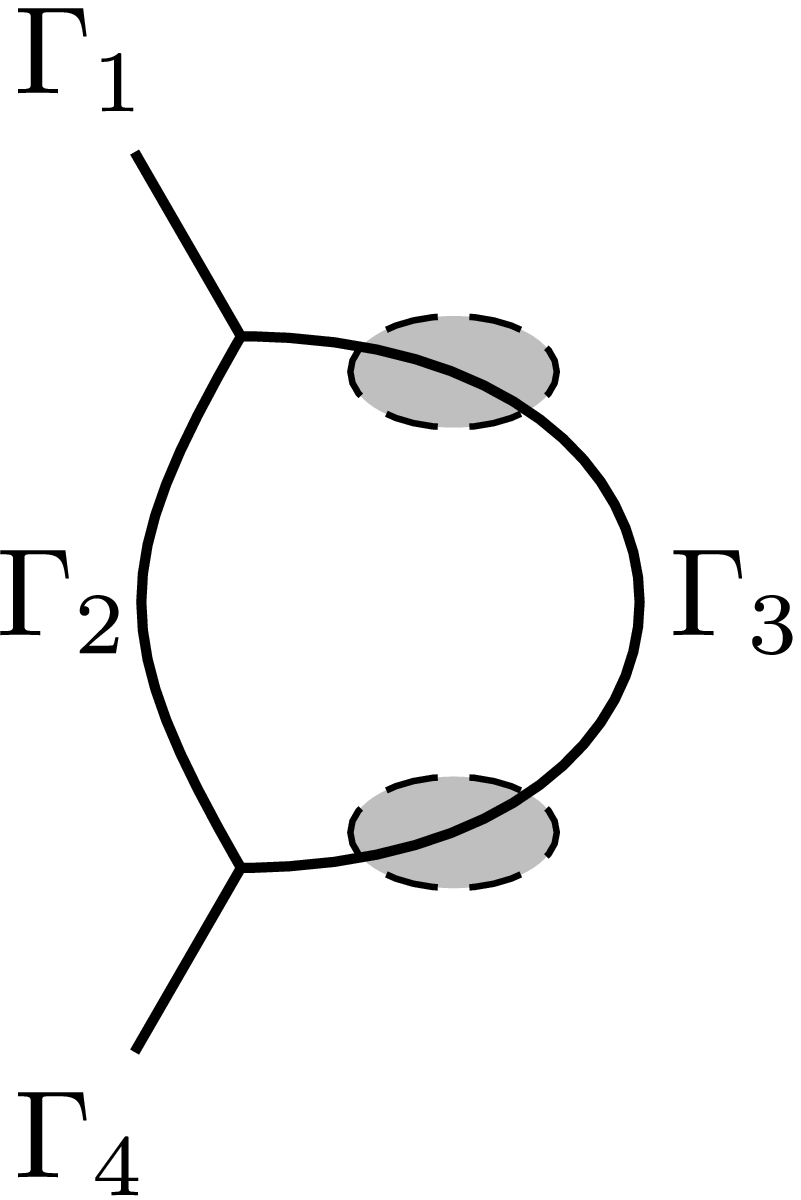}}\\
\subfigure[Case (-1,-1)]{\label{f:(-1,-1)stack}\includegraphics[scale=.28]{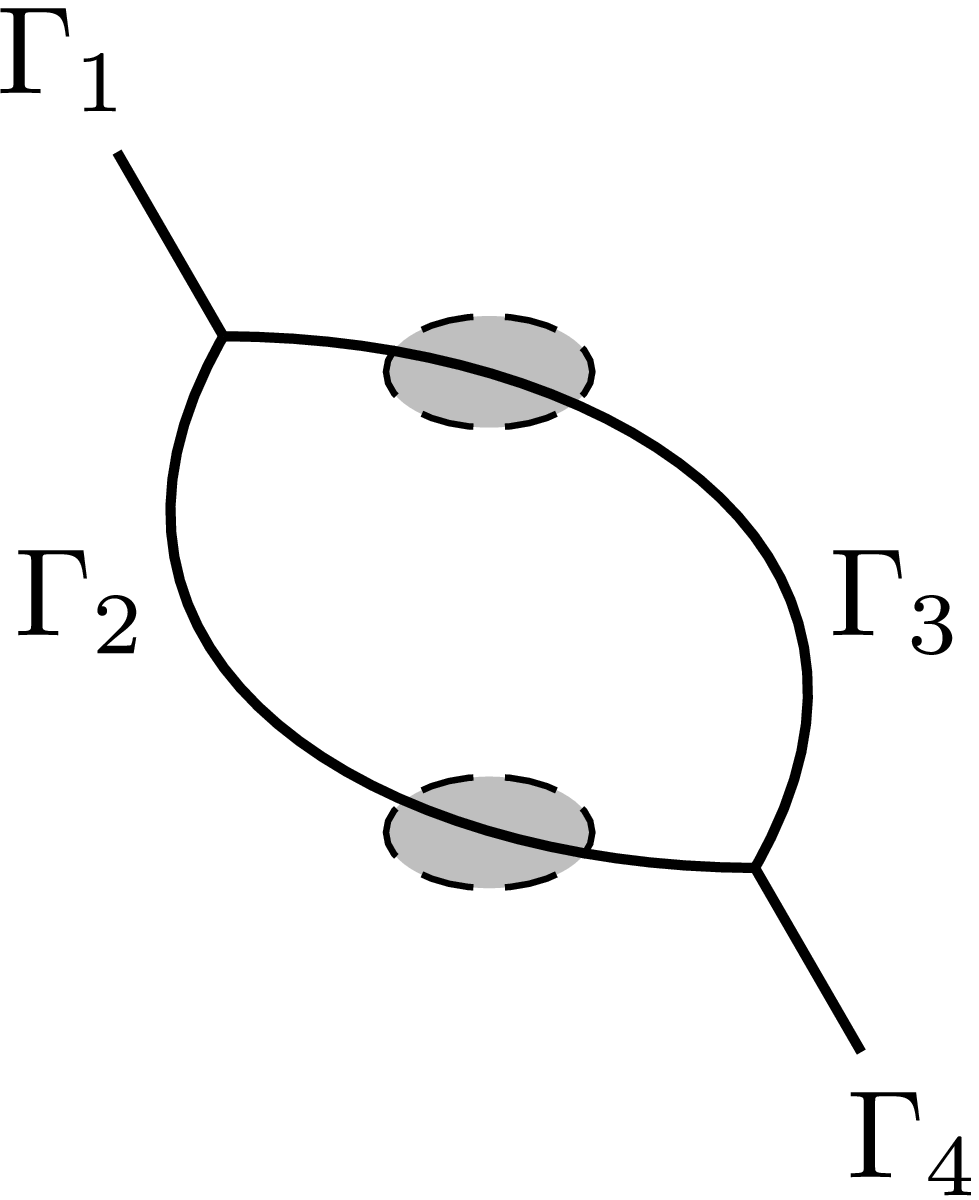}}&
\subfigure[Case (-1,1)]{\label{f:(-1,1)stack}\includegraphics[scale=.28]{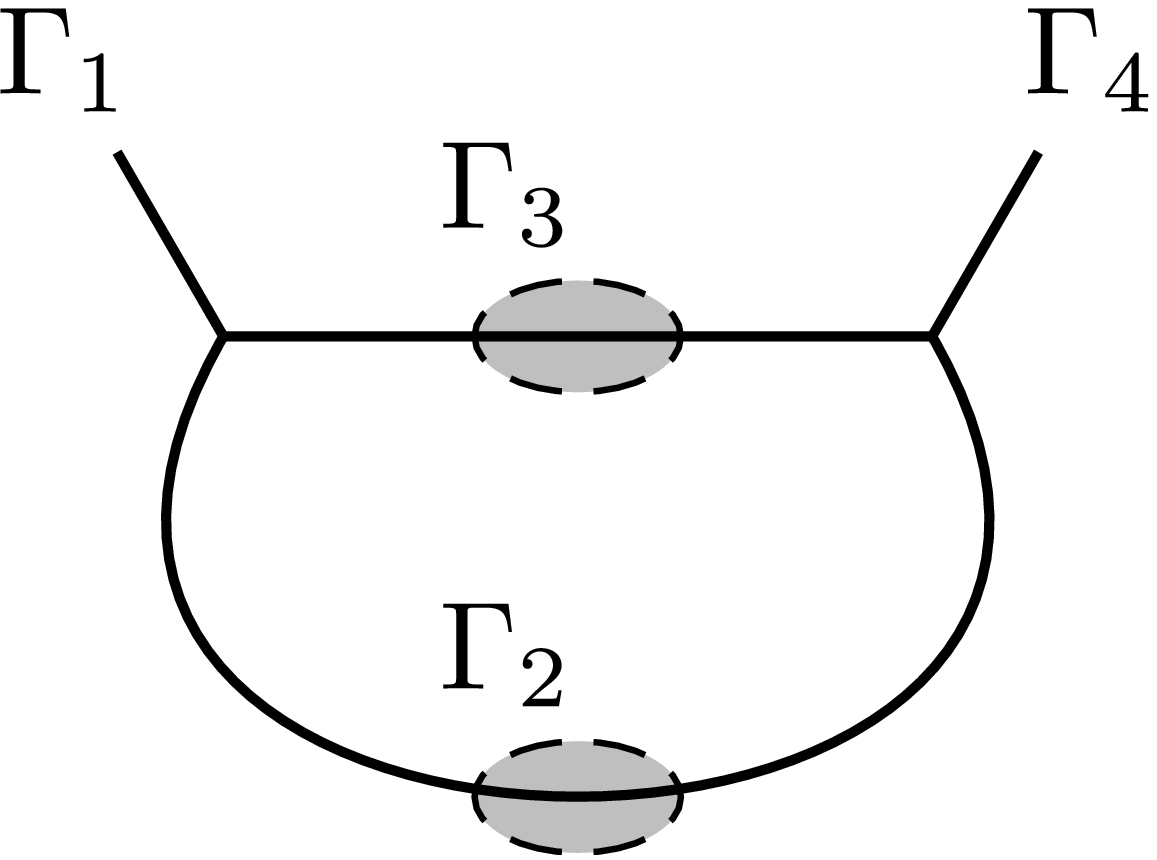}}&
\subfigure[Case (1,-1)]{\label{f:(1,-1)stack}\includegraphics[scale=.28]{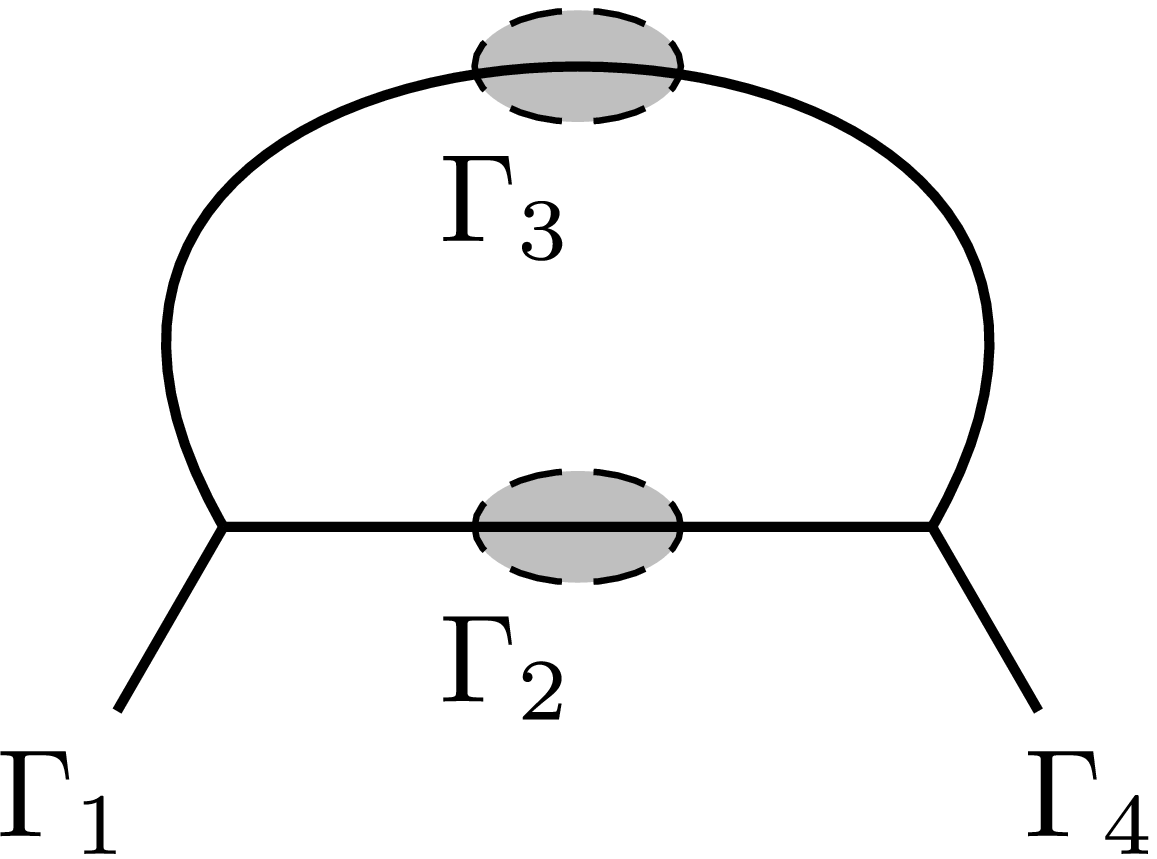}}
\end{tabular}
\caption{Different $(m_p,m_q)$ component cases we consider.  See also \figref{f:(0,0)stack}.  Case $(i, j)$ is symmetrical under relabeling to $(j-3, i-3)$, and under horizontal reflection to $(-j, -i)$.} \label{f:leafstackfigures}
\end{figure}

\smallskip\noindent$\bullet$ Case $(0,2)$: See \figref{f:(0,2)stack}.  (With the notation of \secref{s:notation}, this includes the case in which $\Gamma_3$ is a vertical hyperplane.)
A right-side-up, near-graph stack cannot be placed along $\Gamma_2$ without contradicting \corref{t:stackdown}, since $\Gamma_2$ turns downward past the vertical.  The presence of upside-down, near-graph leaves sitting beneath $\Gamma_2$ beyond where $\Gamma_2$ turns past the vertical will not affect our argument.

If $\Gamma_3$ is a single piece of arc, then by \corref{t:graphcorollary} on $\Gamma_3$, $\f_2(p) < \f_2(q)$.  (If $\Gamma_3$ is a vertical hyperplane, then \corref{t:graphcorollary} doesn't apply, but still $\f_2(p) < \f_2(q)$.)  However, $[-\infty, \f_2(q)) \subset \f(\Gamma_2)$, so $\Gamma_2$ has an internal separating set, contradicting \corref{t:downvertical} or \corref{t:separatingset} (if there is a leaf below $\Gamma_2$). 

In \figref{f:(0,2)stack}, $\Gamma_3$ is drawn with $\theta_{\Gamma_3} \in [\pi/6, \pi/2]$ between $p$ and $q$, so it may appear that a near-graph stack cannot be placed on $\Gamma_3$.  However, this is not the case; see, e.g., \figref{f:(0,2)stackonstack}.  We claim that even if there are one or more near-graph stacks along $\Gamma_3$, still $\f_2(p) < \f_2(q)$ so $\Gamma_2$ has an internal separating set.  

Letting $r \in \Gamma_2$ be the point where $\Gamma_2$ turns vertical ($\f_2(r) = \infty$), we see that $y(p) \leq y(r) < y(q)$; $q$ is above $p$.  
By \claimref{t:sameouterboundaries}, the pieces of $\Gamma_3$ leaving $p$ and leaving $q$ are parts of the same Delaunay hypersurface, up to horizontal translation.
Let $s$ be the first vertex $\Gamma_3$ reaches after leaving $p$; we claim that $\Gamma_3$ from $p$ to $s$ goes above $q$.  Indeed, since $s$ is the left vertex of a near-graph stack, it must be that $\theta_3(s) \in [-\tfrac{\pi}{2},\tfrac{\pi}{6}]$; in particular, $\theta_3(s) \leq \min \{ \theta_3(p), \theta_3(q) \}$.  If $\Gamma_3$ is an unduloid, then $\ddot{\theta}_3 < 0$ as $\Gamma_3$ rises (see Eq.~\eqnref{e:ddottheta}), so $\theta_3$ is more than $\min \{ \theta_3(p), \theta_3(q) \}$ from $p$ until it reaches the height of $q$.  If $\Gamma_3$ is a nodoid or a circle, then $\dot{\theta}_3 = -\kappa_3 < 0$, yielding the same conclusion.  

Therefore, $\Gamma_3$ from $p$ to $s$ goes above $q$; so there exists $q'$ along $\Gamma_3$ from $p$ to $s$, with $y(q') = y(q)$ and $\theta_3(q') = \theta_3(q)$.  Slide the ray $L_2(q)$ left horizontally to point $q'$; then \corref{t:graphcorollary} shows that $\f_2(q) - (x(q)-x(q')) > \f_2(p)$, so in particular $\f_2(q) > \f_2(p)$ as claimed. 

\begin{figure}
\includegraphics[scale=.28]{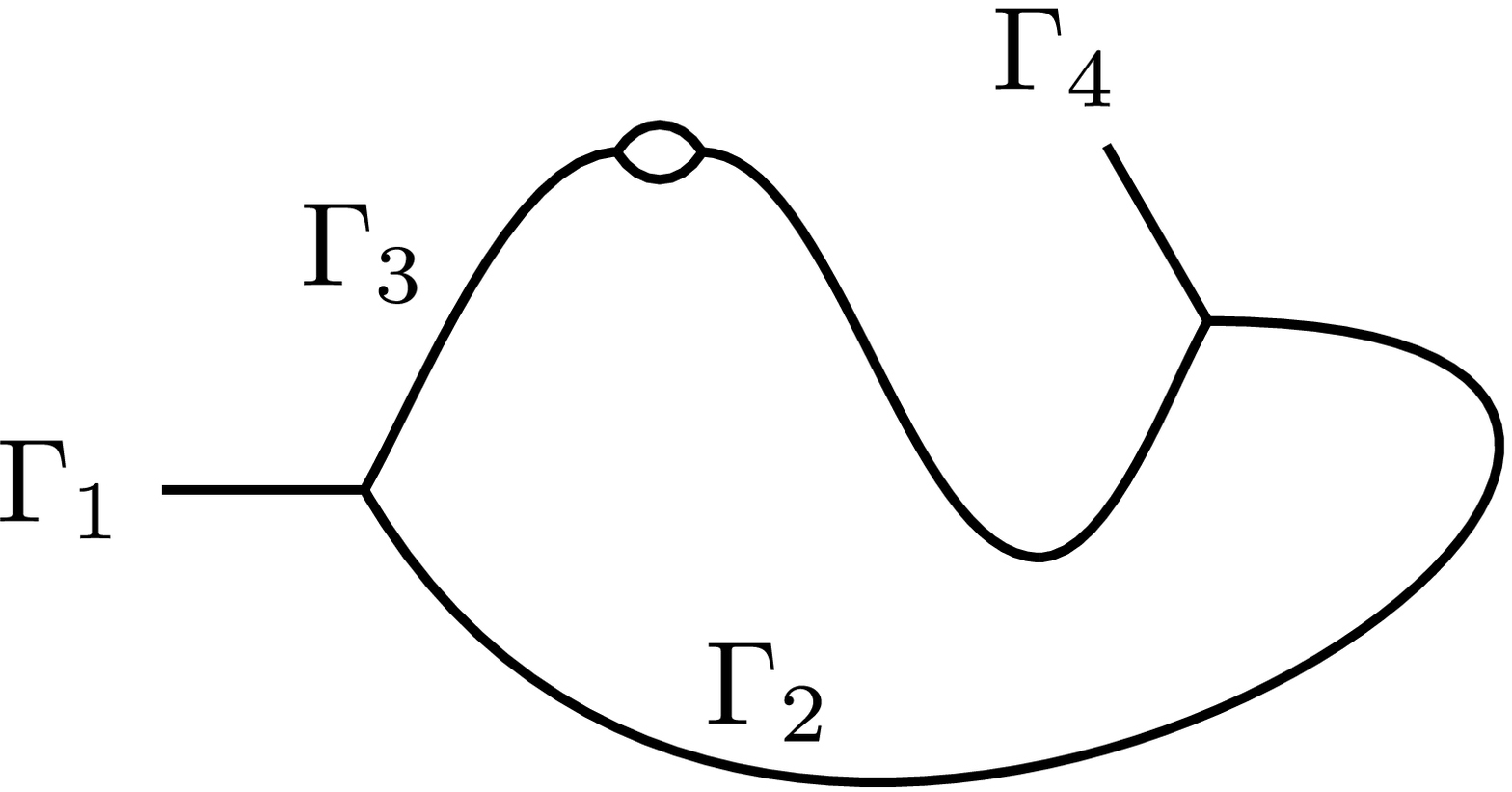}$\qquad$
\includegraphics[scale=.28]{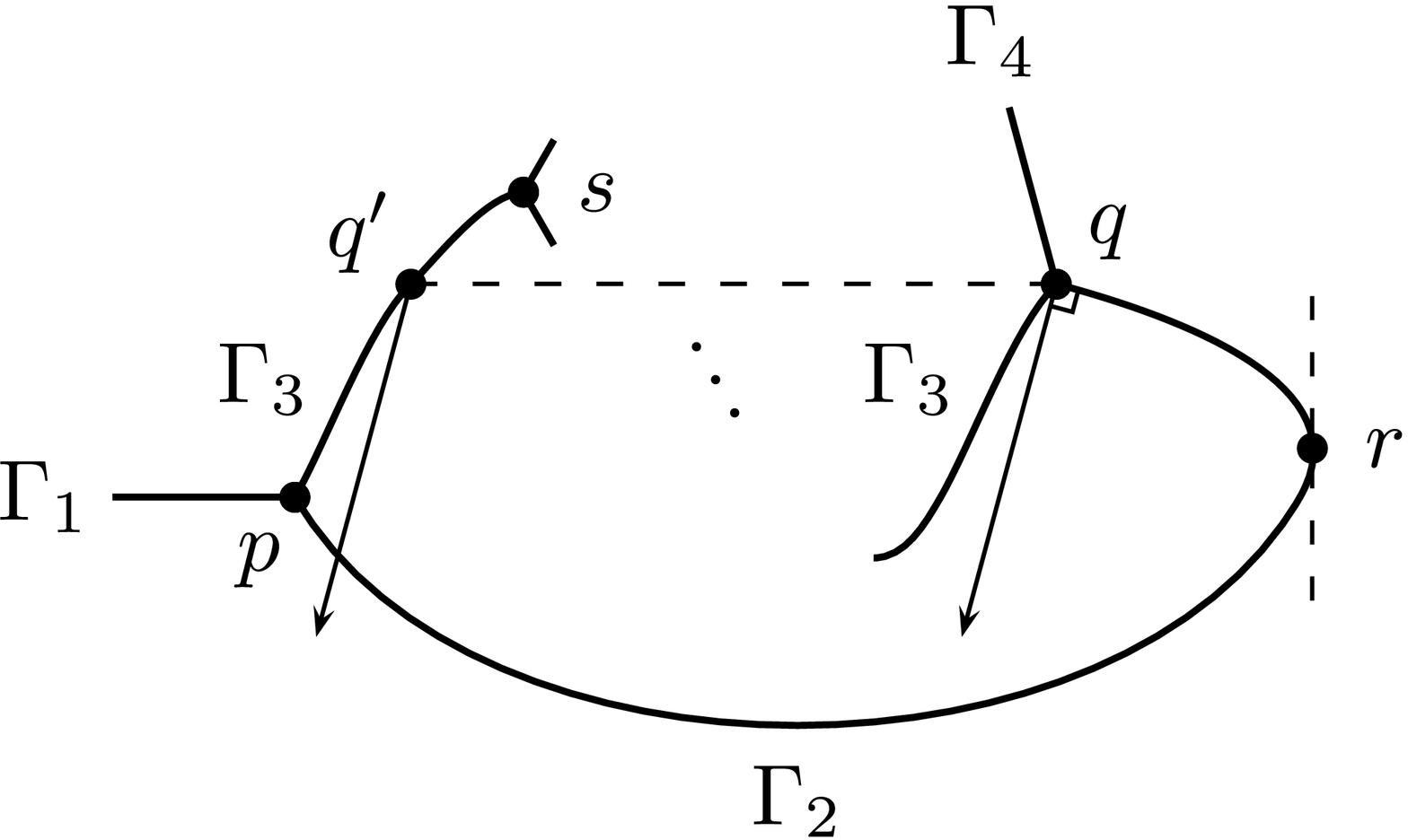}
\caption{
Left: A right-side-up, near-graph component stack might be placed on $\Gamma_3$ in \figref{f:(0,2)stack} if $\Gamma_3$ is an unduloid.  
Right: We show that nodoid $\Gamma_2$ has an internal separating set by horizontally 
translating the ray $L_2(q) = \protect \overrightarrow{q\, \f_2(q)}$ to the piece of arc leaving $p$, then applying \lemref{t:undularyray}.} \label{f:(0,2)stackonstack}
\end{figure}

\smallskip\noindent$\bullet$ Case $(0,3)$ or $m_q \geq 3$: See \figref{f:(0,3)stack}.  There cannot be a stack sitting above $\Gamma_2$ without contradicting \corref{t:stackdown}, since $\Gamma_2$ turns downward past the vertical.  

$\Gamma_3$ must be a convex-left nodoid (it cannot have any components along it by induction), so the ray $L_2(q) = \overrightarrow{q\, \f_2(q)}$ stays right of $\Gamma_3$, implying $\f_2(p) \in [-\infty, \f_2(q)) \subset \f(\Gamma_2)$, contradicting either \corref{t:downvertical} or \corref{t:separatingset} (if there is a leaf below $\Gamma_2$).

\smallskip\noindent$\bullet$ Cases $(0,-1)$, $(0,-2)$ and $(-1,-2)$: See Figures~\ref{f:(0,-1)stack}, \ref{f:(0,-2)stack} and~\ref{f:(-1,-2)stack}.  There cannot be a right-side-up near-graph stack above $\Gamma_3$ by \corref{t:stackdown} since $\Gamma_3$ turns downward past the vertical.  Upside-down, near-graph leaves cannot be placed along $\Gamma_2$ without contradicting \claimref{t:neargraphleafhigherpressure}.  The presence of upside-down, near-graph leaves sitting beneath $\Gamma_3$ beyond where $\Gamma_3$ turns past the vertical will not affect our argument.

Now $\f_1(p) \in [-\infty, \f_3(p)) \subset \f(\Gamma_3)$.  In case $(0,-1)$ or $(0,-2)$, $\f_2(p) < \f_1(p)$, while in case $(-1,-2)$, $[-\infty, \f_2(q)) \subset \f(\Gamma_2)$.  Regardless, \corref{t:separatingset} on $\Gamma_1$, $\Gamma_2$, $\Gamma_3$ implies $\f_2(q) \leq \f_1(p)$.  

In turn, this implies that $\Gamma_3$ must turn at least $\pi$ radians (i.e., $\theta_3(p) - \theta_3(q) \geq \pi$), and $\Gamma_2$ turns at most $\pi/3$ radians.  Indeed, this is necessary by definition in case $(0,-2)$.  In case $(0,-1)$, it follows since $q$ is to the right of $p$, although $\f_2(q) < \f_1(p)$.  In case $(-1,-2)$, it follows since the ray $L_1(p)$ stays above the convex-right nodoid $\Gamma_2$, and $\f_2(q) < \f_1(p)$.

Next, we argue that $\f_3(q) < \f_3(p)$.  Then since $[-\infty,\f_3(p)) \subset \f(\Gamma_3)$, there is an internal separating set in $\Gamma_3$, contradicting either \corref{t:downvertical} or \corref{t:separatingset} (if there is a leaf below $\Gamma_3$). 

Indeed, in case $(-1,-2)$, \corref{t:graphcorollary} applies to give $\f_3(q) < \f_3(p)$.  
In case $(0,-1)$ or $(0,-2)$, we may assume that $\Gamma_2$ does not go through a minimum and maximum between $p$ and $q$ -- see \figref{f:technicallemmaunduloid}.  Therefore, in case $(0,-2)$ or $(0,-1)$ with $\theta_2(q) < 0$, $\Gamma_2$ is strictly decreasing between $p$ and $q$.  \corref{t:unduloidcorollary} applies to give $\f_3(q) < \f_3(p)$.  

\begin{figure}
\includegraphics[scale=.28]{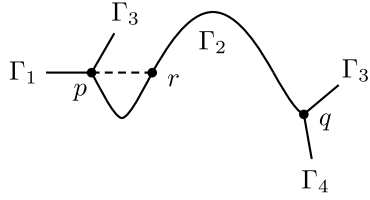}
\caption{In cases $(0,-1)$ and $(0,-2)$, if $\Gamma_2$ is a piece of unduloid, and goes through a minimum and maximum between $p$ and $q$, then there is a point $r \in \Gamma_2$ at the same height as $p$ with $\f(r) > \f_1(p)$.  Therefore $\f_1(p) \in \f(\Gamma_2)$.
} \label{f:technicallemmaunduloid}
\end{figure}

Finally, it remains to show that $\f_3(q) < \f_3(p)$ in case $(0,-1)$ with $\theta_2(q) \geq 0$.  \corref{t:unduloidcorollary} does not apply to $\Gamma_2$ in this case, so we need some additional geometry.\footnote{\cite[Lemma~5.9]{HutchingsMorganRitoreRos00dblbblR3} would suffice for this case, and in fact with \corref{t:unduloidcorollary} is enough for all of cases $(0,-1)$ and $(0,-2)$.  However, we will give a slightly simpler argument just for case $(0,-1)$ with $\theta_2(q) \geq 0$.}  By \lemref{t:simplegeometrylemma} (below) applied to point $q$ with $\phi = \theta_2(q) \in [0,\tfrac{\pi}{6})$ and applied to point $p$ with $\phi = \theta_1(p) \in (\theta_2(q), \tfrac{\pi}{6}]$ (also $y(p) > y(q)$), we get $\f_3(p) - \f_1(p) > \f_3(q) - \f_2(q)$.  Since $\f_2(q) \leq \f_1(p)$, $\f_3(q) < \f_3(p)$ as desired.

\smallskip\noindent$\bullet$ Case $(-1,-1)$: See \figref{f:(-1,-1)stack}.  There cannot be a right-side-up near-graph stack above $\Gamma_3$ by \corref{t:stackdown}, nor an upside-down near-graph leaf under $\Gamma_2$ by \claimref{t:neargraphleafhigherpressure}.  It remains to eliminate the case of a $(-1,-1)$ leaf -- we repeat the argument of \cite[Prop.~7.1]{ReichardtHeilmannLaiSpielman03dblbblR4}.  Now $\f_1(p) \in [-\infty, \f_3(p)) \subset \f(\Gamma_3)$.  The ray $L_1(p)$ stays left of the convex-right nodoid $\Gamma_2$, so $\f_1(p) < \f_2(q)$ and $\f_1(p) \in [-\infty, \f_2(q)) \subset \f(\Gamma_2)$.  This contradicts \corref{t:separatingset} for $\Gamma_1$, $\Gamma_2$, $\Gamma_3$.  

\smallskip\noindent$\bullet$ Case $(-1,1)$ or $m_q \geq 1$: See \figref{f:(-1,1)stack}.  $\Gamma_2$ goes twice vertical, contradicting \corref{t:twicevertical} or \corref{t:separatingset}.  

\smallskip\noindent This and symmetrical considerations concludes the argument if either $\Gamma_1$ or $\Gamma_4$ descends approaching $p$ or $q$, respectively.  The last remaining case is $(1,-1)$, in which both $\Gamma_1$ and $\Gamma_4$ ascend approaching $p$ or $q$.  

\smallskip\noindent$\bullet$ Case $(1,-1)$ or $m_q \leq -1$: See \figref{f:(1,-1)stack}.  $\Gamma_3$ goes twice vertical, contradicting \corref{t:twicevertical} or \corref{t:separatingset}.  
\end{proof}

\begin{lemma} \label{t:simplegeometrylemma}
Let $r$ be a point at height $y(r) > 0$ above the axis $L$.  Let $\phi \in [0,\tfrac{\pi}{6})$.  Drop lines from $r$ at angles $\phi$ and $\phi+\tfrac{\pi}{3}$ from the vertical (\figref{f:simplegeometrylemma}).  Then the difference in the $x$-coordinates of these lines' intersections with $L$, 
\[
y(r) \left( \tan(\phi+\tfrac{\pi}{3}) - \tan\phi \right) \enspace ,
\]
is a strictly increasing function in both $\phi$ and $y(r)$.
\end{lemma}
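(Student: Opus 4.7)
The plan is to verify both monotonicity claims by elementary one-variable calculus applied to the explicit expression
\[
D(\phi, y) := y\bigl(\tan(\phi+\tfrac{\pi}{3}) - \tan\phi\bigr).
\]
First I would observe that under the hypothesis $\phi \in [0,\tfrac{\pi}{6})$, both $\phi$ and $\phi+\tfrac{\pi}{3}$ lie in $[0,\tfrac{\pi}{2})$, where $\tan$ is finite and strictly increasing. In particular $\tan(\phi+\tfrac{\pi}{3}) - \tan\phi > 0$, so $D$ is a strictly positive multiple of $y$, giving strict monotonicity in $y > 0$ immediately.

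For monotonicity in $\phi$, I would differentiate:
\[
\frac{\partial D}{\partial \phi} \;=\; y\bigl(\sec^2(\phi+\tfrac{\pi}{3}) - \sec^2\phi\bigr).
\]
On $[0,\tfrac{\pi}{2})$ the function $\sec^2$ is strictly increasing (its derivative $2\sec^2\tan$ is strictly positive there), and the hypothesis $\phi < \tfrac{\pi}{6}$ ensures $\phi+\tfrac{\pi}{3} < \tfrac{\pi}{2}$, so both evaluations remain strictly inside this interval. Hence $\sec^2(\phi+\tfrac{\pi}{3}) > \sec^2\phi$ and $\partial D/\partial \phi > 0$ for any $y > 0$, giving strict monotonicity in $\phi$.

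There is no real obstacle: once the formula for the horizontal distance between the two drop-line intersections with $L$ is written down (each drop-line from $r$ at angle $\psi$ from vertical hits $L$ at horizontal offset $y(r)\tan\psi$ from $r$), the lemma reduces to a one-line calculus check. The only subtlety is confirming via $\phi < \tfrac{\pi}{6}$ that both $\tan$ and $\sec^2$ are evaluated well inside their domain of strict monotonicity, safely away from the asymptote at $\tfrac{\pi}{2}$.
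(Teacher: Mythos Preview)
Your proof is correct. Both your argument and the paper's reduce the lemma to an elementary monotonicity check, but the manipulations differ slightly: the paper first rewrites the difference via the identity
\[
\tan(\phi+\tfrac{\pi}{3}) - \tan\phi \;=\; \frac{\sin(\pi/3)}{\cos(\phi+\tfrac{\pi}{3})\cos\phi} \;=\; \frac{\tan(\pi/3)}{\tfrac12 + \cos(2\phi+\tfrac{\pi}{3})}
\]
and then observes that $\cos(2\phi+\tfrac{\pi}{3})$ is positive-denominator-preserving and strictly decreasing for $\phi\in[0,\tfrac{\pi}{6})$; you instead differentiate directly and invoke the strict monotonicity of $\sec^2$ on $[0,\tfrac{\pi}{2})$. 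Your route is arguably the more direct one and avoids the product-to-sum step. One tiny quibble: the derivative $2\sec^2\phi\tan\phi$ vanishes at $\phi=0$, so ``strictly positive there'' is not quite right at the endpoint, but strict monotonicity of $\sec^2$ on $[0,\tfrac{\pi}{2})$ still holds (e.g.\ because $\cos$ is strictly decreasing and positive there), so the conclusion is unaffected.
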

\begin{proof}
Simplifying, 
\[
\tan(\phi+\tfrac{\pi}{3}) - \tan\phi = \frac{\tan\tfrac{\pi}{3}}{\tfrac12+\cos(2\phi+\tfrac{\pi}{3})} \enspace ,
\]
and $\cos(2\phi+\tfrac{\pi}{3}) > \tfrac12$ and is decreasing for $\phi \in [0,\tfrac{\pi}{6})$.
\end{proof}

\begin{figure}
\includegraphics[scale=.28]{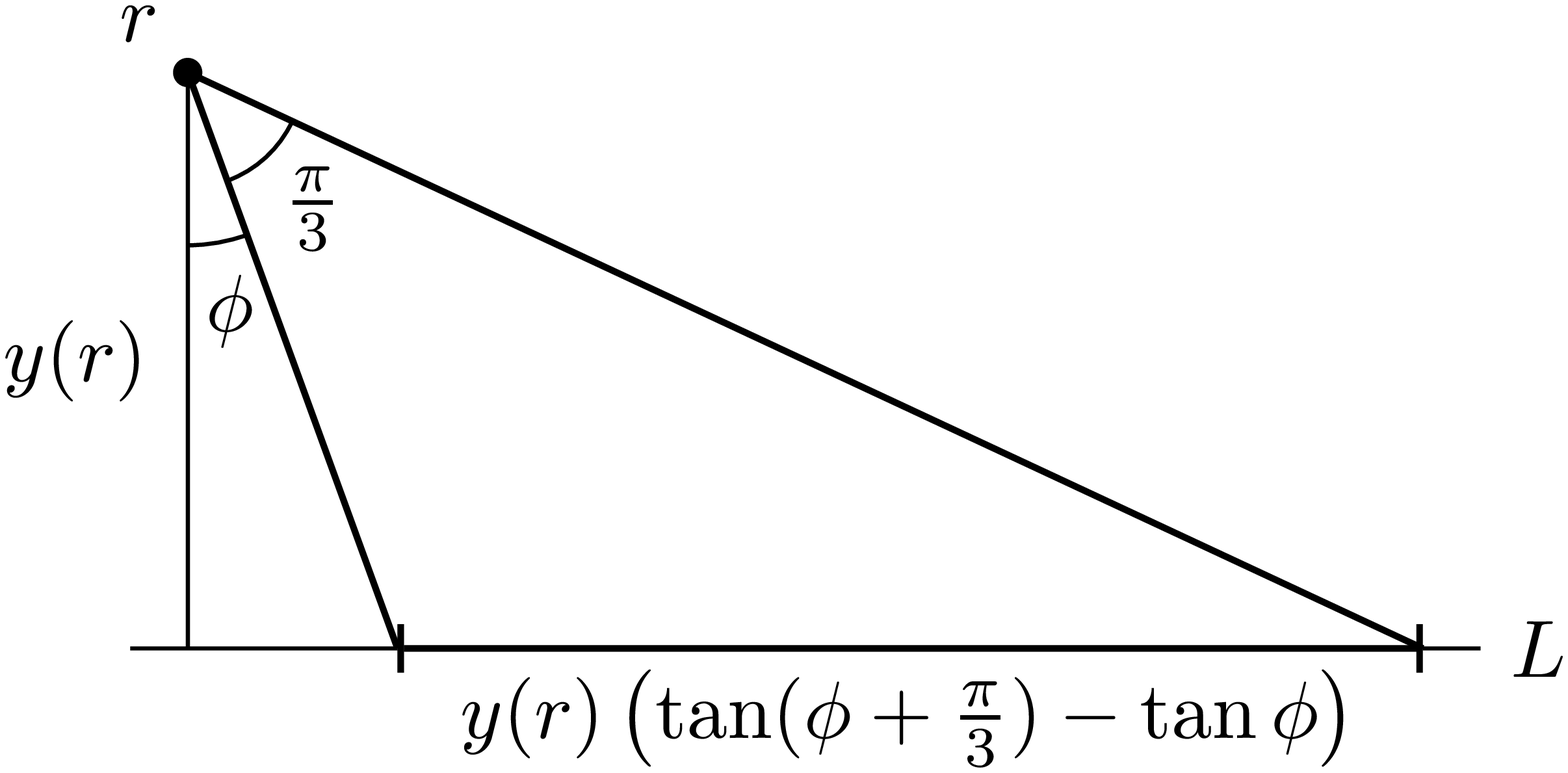}
\caption{\lemref{t:simplegeometrylemma}.} \label{f:simplegeometrylemma}
\end{figure}

\section{Root stability} \label{s:root}

The ``root'' of a nonstandard minimizing double bubble corresponds to the root of its associated tree of \thmref{t:structure} and \figref{f:structure}.  The root involves five arcs including two circular caps to either side, as in \figref{f:root}.  

\begin{figure} 
\includegraphics[scale=.28]{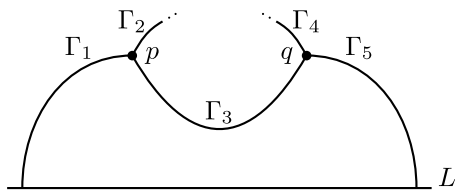}
\caption{The root component involves five arcs: $\Gamma_{2}$, $\Gamma_{3}$, $\Gamma_{4}$, and the two circular caps $\Gamma_{1}$ and $\Gamma_{5}$.} \label{f:root}
\end{figure}

\begin{proposition} \label{t:neargraph}
In a minimizer, the child component $C$ of the root component cannot be either an upside-down, near-graph leaf, or the base component of a right-side-up, near-graph stack.  
\end{proposition}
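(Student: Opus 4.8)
The plan is to adopt the root-component notation of \figref{f:root}: the two spherical caps $\Gamma_1$ (bounding the root on the $R_1$ side) and $\Gamma_5$ (on the $R_2$ side), the internal arc $\Gamma_3$ of $\Sigma_0$, and the two arcs $\Gamma_2$, $\Gamma_4$ along which the unique child $C$ of the root (\thmref{t:structure}) is attached, with $\Gamma_2$ meeting $\Gamma_1$ and $\Gamma_3$ at a vertex $p$, and $\Gamma_4$ meeting $\Gamma_3$ and $\Gamma_5$ at a vertex $q$. Two features drive the argument: first, $\Gamma_1$ and $\Gamma_5$ are circular arcs centered on $L$ (\thmref{t:structure}), so they have zero force and constant $\f$, equal to the cap centers $c_1, c_5 \in L$; second, force balancing (\lemref{t:forcebalancing}) at $p$ with $F_1 = 0$, and at $q$ with $F_5 = 0$, yields $\abs{F_2} = \abs{F_3} = \abs{F_4}$ and fixes the $120^\circ$ relations among the tangent directions there. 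By \thmref{t:componentclassification} the child $C$ is necessarily one of the two configurations named in the statement, and in each case the plan is --- as throughout \secref{s:leafcomponentclassification} --- to produce an $x \in L \cup \{\infty\}$ whose preimage $\f^{-1}(x)$ separates $\cup \bar{\Gamma}_i$ through the interiors of distinct arcs (other than the two circular caps), contradicting \corref{t:separatingset}, or, in the degenerate form, a single arc carrying an internal separating set while turning downward past the vertical, contradicting \corref{t:downvertical} or \corref{t:twicevertical}.

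For case~(a), $C$ an upside-down, near-graph leaf, I would invoke \claimref{t:neargraphleafhigherpressure}: the region on the inside of $C$'s top arc has strictly larger pressure, and, by the proof of that claim, the corresponding outer boundary of the leaf must be a convex-up nodoid. I then expect a contradiction from the same angle-and-separating-set bookkeeping at the root vertices $p$, $q$ as in cases $(0,-1)$ and $(-1,-2)$ of \thmref{t:componentclassification}: the higher pressure, the convex-up-nodoidal outer boundary, the $120^\circ$ conditions at $p$ and $q$, and the spherical caps $\Gamma_1$, $\Gamma_5$ cannot all be reconciled without forcing some arc either to carry an internal separating set while turning downward past the vertical or to go vertical twice, contradicting \corref{t:downvertical}, \corref{t:twicevertical}, or \corref{t:separatingset}. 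I expect this case to be short.

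For case~(b), $C$ the base of a right-side-up, near-graph stack --- which subsumes a right-side-up, near-graph leaf --- \lemref{t:neargraphstack} applies directly to $C$, so $\f$ at its bottom-left vertex is strictly less than $\f$ at its bottom-right vertex. Those two bottom arcs are $\Gamma_2$ (left) and $\Gamma_4$ (right); by \corref{t:stackdown} neither turns downward past the vertical after leaving $C$, so each descends to $p$, resp.\ $q$, remaining a graph on which $\f$ is monotone (its derivative has the sign of the force). It remains to combine this monotonicity of $\f$ along $\Gamma_2$ and $\Gamma_4$, the force relations and $120^\circ$ conditions at $p$ and $q$, and the constancy of $\f$ on the caps, to trap some value $x$ --- a cap center $c_1$ or $c_5$, or $\infty$ --- in the $\f$-ranges of two of the non-cap arcs $\Gamma_2$, $\Gamma_3$, $\Gamma_4$, producing a forbidden separating set; equivalently, to run the global instability argument of \secref{s:instability} on the now fully classified near-graph minimizer. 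When the stack $C$ wraps around the axis through extra minima or maxima, the arcs leaving $p$ and $q$ are still controlled using \claimref{t:sameouterboundaries} together with \lemref{t:undularyray} and \lemref{t:ddottheta}, exactly as in \corref{t:unduloidcorollary} and \corref{t:graphcorollary}.

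I expect case~(b) to be the main obstacle: selecting the separating value $x$ and, above all, verifying that $\f^{-1}(x)$ actually meets the interiors of \emph{distinct} $\Gamma_i$ --- so that \corref{t:separatingset}, and not merely \corref{t:downvertical}, is contradicted --- while carefully tracking tangent angles and forces through the root vertices $p$, $q$ and allowing for a multiply-wrapped stack above the root. Case~(a) should follow quickly once \claimref{t:neargraphleafhigherpressure} has been applied.
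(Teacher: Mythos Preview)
Your plan is headed in a workable direction, and for case~(b) you have correctly identified \lemref{t:neargraphstack} as the decisive input.  But you are missing the single organizing observation that makes the paper's proof short, and as a result you are proposing to invoke much more machinery than is needed.

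The observation is this: because the caps $\Gamma_1$ and $\Gamma_5$ are pieces of circles centered on $L$ (\thmref{t:structure}), the rotation notches at the root vertices are automatically constrained to $m_p,m_q\in\{-1,0,1\}$.  That constraint, not pressure or force balancing, does all the work.

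For case~(a), the notch restriction disposes of the upside-down near-graph leaf in one line: connecting to such a leaf would force one of the caps to turn past the vertical, which a circle centered on $L$ cannot do at a point of positive height.  There is no need for \claimref{t:neargraphleafhigherpressure}, nodoidal analysis, or any separating-set bookkeeping here; your proposed route through pressure is not wrong in spirit, but it is both longer and vaguer than what the situation demands.

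For case~(b), the notch restriction together with the near-graph requirement on $C$ leaves only $(m_p,m_q)\in\{(0,0),(-1,0),(0,1)\}$.  The asymmetric cases $(-1,0)$ and $(0,1)$ fall immediately: e.g.\ in $(-1,0)$ one has $\f_4(q)<\f_3(q)$ while $[-\infty,\f_3(q))\subset\f(\Gamma_3)$, giving a $\Gamma_3,\Gamma_4$ separating set directly at $q$ and contradicting \corref{t:separatingset}.  Only the symmetric case $(0,0)$ needs \lemref{t:neargraphstack}, and it is used exactly once: avoiding separating sets between $\Gamma_3$ and each of $\Gamma_2,\Gamma_4$ forces $\f(\Gamma_4)<\f(\Gamma_3)<\f(\Gamma_2)$, which contradicts the lemma applied at the far endpoints of $\Gamma_2$ and $\Gamma_4$.

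In particular, none of \corref{t:stackdown}, the cap-center values $c_1,c_5$, monotonicity of $\f$ along $\Gamma_2,\Gamma_4$, force balancing beyond $F_1=F_5=0$, or the Delaunay Lemmas~\ref{t:undularyray}--\ref{t:ddottheta} enter the argument at the root.  Your anticipated ``main obstacle'' --- a multiply-wrapped stack above the root --- is a non-issue: all of that complexity has already been packaged into \lemref{t:neargraphstack}, which you may apply as a black box.
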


\begin{proof}
$\Gamma_1$ and $\Gamma_5$ are pieces of circles centered on the axis $L$, so $m_p$, $m_q$ the number of notches $p$, $q$ are rotated from their positions in \figref{f:root} must each lie in $\{-1,0,1\}$. 

In particular, neither $\Gamma_1$ nor $\Gamma_5$ can turn past the vertical to connect to an upside-down, near-graph leaf.  

For $C$ to be the base component of a right-side-up, near-graph stack, $(m_p, m_q) \in \{(0,0),(-1,0),(0,1)\}$.  In case $(-1,0)$, $\f_4(q) < \f_3(q)$ and $[-\infty,\f_3(q)) \subset \f(\Gamma_3)$, so there is a separating set across $\Gamma_3$, $\Gamma_4$, contradicting \corref{t:separatingset}.  Similarly, $(m_p, m_q) \neq (0,1)$.  

In case $(m_p, m_q) = (0,0)$, to avoid a separating set between $\Gamma_3$ and either $\Gamma_2$ or $\Gamma_4$, it must be that $\f(\Gamma_4) < \f(\Gamma_3) < \f(\Gamma_2)$, contradicting \lemref{t:neargraphstack} for the far endpoints of $\Gamma_2$ and $\Gamma_4$.  
\end{proof}

\section{Proof of the Double Bubble Conjecture}

\begin{proof}[Proof of \thmref{t:conjecture}]
Suppose that the minimizer is nonstandard.  Each region has a finite number of components by \corref{t:finite}.  Consider the minimizer's generating curves.  \thmref{t:componentclassification} implies that the root's child component is either an upside-down, near-graph leaf, or the base component of a right-side-up, near-graph component stack -- contradicting \propref{t:neargraph}.  Therefore, an area-minimizing double bubble must be the standard double bubble.  
\end{proof}

\begin{acknowledgements}
The author thanks Marilyn Daily for helpful discussions, and thanks Frank Morgan for comments on an early draft of the paper.  Research supported by NSF Grant PHY-0456720 and ARO Grant W911NF-05-1-0294. 
\end{acknowledgements}

\bibliographystyle{halpha}
\bibliography{bibliography}

\end{document}